\begin{document}
\def\note#1{\marginpar{\small #1}}

\def\tens#1{\pmb{\mathsf{#1}}}
\def\vec#1{\boldsymbol{#1}}

\def\norm#1{\left|\!\left| #1 \right|\!\right|}
\def\fnorm#1{|\!| #1 |\!|}
\def\abs#1{\left| #1 \right|}
\def\ti{\text{I}}
\def\tii{\text{I\!I}}
\def\tiii{\text{I\!I\!I}}

\def\diver{\mathop{\mathrm{div}}\nolimits}
\def\prob{\mathop{\mathrm{Prob}}\nolimits}
\def\supp{\mathop{\mathrm{supp}}\nolimits}
\def\sgn{\mathop{\mathrm{sgn}}\nolimits}
\def\curl{\mathop{\mathrm{curl}}\nolimits}
\def\grad{\mathop{\mathrm{grad}}\nolimits}
\def\Div{\mathop{\mathrm{Div}}\nolimits}
\def\Grad{\mathop{\mathrm{Grad}}\nolimits}

\def\tr{\mathop{\mathrm{tr}}\nolimits}
\def\cof{\mathop{\mathrm{cof}}\nolimits}
\def\det{\mathop{\mathrm{det}}\nolimits}

\def\lin{\mathop{\mathrm{span}}\nolimits}
\def\pr{\noindent \textbf{Proof: }}
\def\pp#1#2{\frac{\partial #1}{\partial #2}}
\def\dd#1#2{\frac{\d #1}{\d #2}}

\newcommand{\dx}{\,\mathrm{d}x}
\newcommand{\dy}{\,\mathrm{d}y}
\newcommand{\dz}{\,\mathrm{d}z}
\newcommand{\ds}{\,\mathrm{d}s}
\newcommand{\dS}{\,\mathrm{d}S}
\newcommand{\dxi}{\,\mathrm{d}\xi}
\newcommand{\dt}{\,\mathrm{d}t}

\def\T{\mathcal{T}}
\def\R{\mathcal{R}}
\def\bx{\vec{x}}
\def\be{\vec{e}}
\def\bef{\vec{f}}
\def\bec{\vec{c}}
\def\bs{\vec{s}}
\def\ba{\vec{a}}
\def\bn{\vec{n}}
\def\bphi{\vec{\varphi}}
\def\btau{\vec{\tau}}
\def\bc{\vec{c}}
\def\bg{\vec{g}}
\def\bb{\vec{b}}

\def\bW{\tens{W}}
\def\bG{\tens{G}}
\def\bT{\tens{T}}
\def\bD{\tens{D}}
\def\bF{\tens{F}}
\def\bg{\tens{g}}
\def\bL{\tens{L}}
\def\bB{\tens{B}}
\def\bV{\tens{V}}
\def\bQ{\tens{Q}}
\def\bS{\tens{S}}
\def\bI{\tens{I}}
\def\bA{\tens{A}}
\def\bi{\vec{i}}
\def\bv{\vec{v}}
\def\bfi{\vec{\varphi}}
\def\bk{\vec{k}}
\def\b0{\vec{0}}
\def\bom{\vec{\omega}}
\def\bw{\vec{w}}
\def\p{\pi}
\def\bu{\vec{u}}

\def\ID{\mathcal{I}_{\bD}}
\def\IP{\mathcal{I}_{p}}
\def\Pn{(\mathcal{P})}
\def\Pe{(\mathcal{P}^{\eta})}
\def\Pee{(\mathcal{P}^{\varepsilon, \eta})}

\def\R{\mathbb{R}}
\def\Rd{\mathbb{R}^d}
\def\Rdp{\mathbb{R}^{d+1}_+}
\def\Rp{\mathbb{R}_+}

\def\Ln#1{L^{#1}_{\bn}}

\def\Wn#1{W^{1,#1}_{\bn}}

\def\Lnd#1{L^{#1}_{\bn, \diver}}

\def\Wnd#1{W^{1,#1}_{\bn, \diver}}

\def\Wndm#1{W^{-1,#1}_{\bn, \diver}}

\def\Wnm#1{W^{-1,#1}_{\bn}}

\def\Lb#1{L^{#1}(\partial \Omega)}

\def\Lnt#1{L^{#1}_{\bn, \btau}}

\def\Wnt#1{W^{1,#1}_{\bn, \btau}}

\def\Lnd#1{L^{#1}_{\bn, \btau, \diver}}

\def\Wntd#1{W^{1,#1}_{\bn, \btau, \diver}}

\def\Wntdm#1{W^{-1,#1}_{\bn,\btau, \diver}}

\def\Wntm#1{W^{-1,#1}_{\bn, \btau}}

\def\W0dm#1{W^{-1,#1}_{0, \diver}}

\def\W0d#1{W^{1,#1}_{0, \diver}}

\def\W0m#1{W^{-1,#1}_{0}}
\def\essup{\mathop{\mathrm{ess \, sup}}\limits}

\def\W0#1{W^{1,#1}_{0}}

\def\mA{\mathcal{A}}

\def\bbbone{{\mathchoice {\rm 1\mskip-4mu l}
{\rm 1\mskip-4mu l} {\rm 1\mskip-4.5mu l} {\rm 1\mskip-5mu l}}}

\newtheorem{Theorem}{Theorem}[section]
\newtheorem{Example}{Example}[section]
\newtheorem{Lemma}{Lemma}[section]
\newtheorem{Proposition}{Proposition}[section]
\newtheorem{Rem}{Remark}[section]
\newtheorem{Def}{Definition}[section]
\newtheorem{Col}{Corollary}[section]
\numberwithin{equation}{section}


%
%
%
%
%
%
%
%
%
%
%

\title[Conservation laws with discontinuous flux]{On unified theory for scalar conservation laws with fluxes and sources  discontinuous with respect to the unknown}\thanks{M.~Bul\'{\i}\v{c}ek thanks the project MORE (ERC-CZ project LL1202 financed by the Ministry of Education, Youth and Sports, Czech Republic). M.~Bul\'{\i}\v{c}ek is a member of the Ne\v{c}as Center for Mathematical Modeling. A.~\'{S}wierczewska-Gwiazda acknowledges the support of the National  Science Centre,  DEC-2012/05/E/ST1/02218.  The research was partially supported by the Warsaw Center of Mathematics and Computer Science.  P.~Gwiazda received support from the National Science Centre (Poland), 2015/18/M/ST1/00075.}

\author[M. Bul\'{\i}\v{c}ek]{Miroslav Bul\'i\v{c}ek}
\address{Miroslav Bul\'i\v{c}ek, Mathematical Institute, Faculty of Mathematics and Physics, Charles University\\ Sokolovsk\'{a} 83,
186 75 Praha 8, Czech Republic}
\email{mbul8060@karlin.mff.cuni.cz}

\author[P. Gwiazda]{Piotr Gwiazda}
\address{Piotr Gwiazda, Institute of Applied Mathematics, University of Warsaw, ul. Banacha 2, 02-097 Warsaw, Poland}
\address{Institute of Mathematics of Polish Academy of Sciences, \'Sniadeckich 8,
00-656 Warszawa Poland}
\email{P.Gwiazda@mimuw.edu.pl}

\author[A. \'Swierczewska-Gwiazda]{Agnieszka \'Swierczewska-Gwiazda}
\address{Agnieszka \'Swierczewska-Gwiazda, Institute of Applied Mathematics, University of Warsaw, ul. Banacha 2, 02-097 Warsaw, Poland}
\email{aswiercz@mimuw.edu.pl}

\begin{abstract}

We deal with the Cauchy problem for multi-dimensional scalar conservation laws, where the fluxes and the source terms can be discontinuous functions of the unknown.
The main novelty of the paper is the  introduction of  a~kinetic formulation for the considered problem. To handle the discontinuities we work in the framework of re-parametrization of the flux and the source functions, which was previously used for  Kru\v{z}kov entropy solutions. Within this approach we obtain a fairly complete picture: existence of entropy measure valued solutions, entropy weak solutions and their equivalence to the kinetic solution. The results of existence and uniqueness follow under the assumption of H\"{o}lder continuity at zero of the flux. The source term, what is another novelty for the  studies on problems with discontinuous flux, is only assumed to be  one-side Lipschitz, not necessarily monotone function.

\end{abstract}

\keywords{scalar hyperbolic conservation law, measure-valued solution, entropy weak solutions, kinetic solution, existence, uniqueness, discontinuous flux, discontinuous source term}

\subjclass[2000]{35L65, 35R05}

\maketitle

\section{Introduction} \label{S1}
We focus on the Cauchy problem for a scalar hyperbolic balance law of the following form
\begin{align}
\left.\begin{aligned}
\partial_t u + \diver \bF(u)&=G(u) &&\textrm{in }\mathbb{R}_+^{d+1},\\
u(0)&=u_0 &&\textrm{in }\mathbb{R}^{d},
\end{aligned}\right.
\label{E1}
\end{align}
where $\mathbb{R}^{d+1}_+:=(0,\infty)\times \Rd$, $d\ge 1$ denotes an arbitrary spatial dimension, $u:\mathbb{R}^{d+1}_+ \to \mathbb{R}$  is an unknown function,  $\bF: \mathbb{R} \to \Rd$ is a given flux of the quantity $u$, $u_0:\mathbb{R}^d\to \mathbb{R}$ is the initial condition  and $G: \mathbb{R} \to \mathbb{R}$ is the given source term. In addition we assume that $u$  vanishes as $|x|\to \infty$. The main goal of the paper is to build a sufficiently robust framework that is capable to cover as general class of fluxes and source terms as possible. In particular, we want to focus on the cases when both quantities are discontinuous functions of the unknown $u$. The starting point is  the method introduced in \cite{BuGwMaSw2011}, where the authors considered the problem without the source term on the right-hand side and roughly speaking with the flux having jump discontinuities with respect to $u$ and showed the existence and uniqueness result of a weak entropy solution. Later the framework  was extended to fluxes discontinuous both in $x$ and $u$ in \cite{BuGwGw13} and further in~\cite{GwSwWiZi2014}, where the formulation encounters also the source term on the right-hand side, but  under some rather  restrictive assumptions like continuity and monotonicity. The problem with a one-side Lipschitz source term (but with a continuous flux) was studied in~\cite{GwSw2005} by the methods of set-valued analysis. However, in none of these works the kinetic formulation for such problems has been introduced. The existence proof of entropy weak solutions followed the scheme of regularising  the discontinuous flux  and adapting in some way the ideas of Kru\v{z}kov~\cite{Kr70} and the framework of entropy measure valued solutions (DiPerna~\cite{DiPerna}, Szepessy~\cite{Sz89a}). In the current paper we significantly relax the assumptions on the source term in comparison to~\cite{GwSwWiZi2014} and  we define the \emph{kinetic formulation} and  \emph{the weak entropy solution} to \eqref{E1} and show their equivalence. Finally, we present the constructive proof of existence of a kinetic solution. It means that instead of a standard approximation by a diffusion term we approximate the kinetic equation by an equation of the Boltzmann type. This procedure is usually referred as the \emph{kinetic approximation}.

Furthermore, we also incorporate the notion of \emph{entropy measure valued solution}, which will be shown to be equivalent to the above ones and introduce and prove the stability of  measure valued solutions with respect to data, which will be the key tool  for the uniqueness and the independence of  solutions of  parametrization.

The techniques presented here apply to a more general setting, in particular to the fluxes and sources which in addition are $x-$discontinuous  in a way that they satisfy  the structural assumptions introduced in \cite{AudussePerthame,Panov} and further essentially generalized in \cite{BuGwGw13}, where however the discontinuous source term is not taken into account.

\subsection{The classical results for smooth $\bF$ and $G$}
Our aim is the unification of the well-posedness theory for discontinuous scalar conservation laws. To understand the interplay between entropy weak solutions and kinetic formulation we first briefly describe the development of the theory for problems involving smooth fluxes $\bF$ and sources $G$. It is well known that \eqref{E1} may not be solvable globally in time in the classical sense even if the initial data are smooth. On the other hand the class of weak solutions is too wide to provide uniqueness. Therefore, one surely needs to modify the meaning of a solution in a way that some additional  selection criteria allow to choose a unique solution.  The  celebrated idea of Kru\v{z}kov, see  \cite{Kr70}, coming from the viscous approximation, was to add an additional constraint for the solution, the so-called \emph{entropy inequality}, which takes the following form
\begin{equation}
\begin{split}
\partial_t |u-k| + \diver \left( \sgn(u-k) (\bF(u)-\bF(k)) \right) \le G(u) \sgn (u-k) \quad \textrm{in }\mathbb{R}^{d+1}_+, \label{Kruzx}
\end{split}
\end{equation}
for any $k\in\R$ or more generally (but equivalently)
\begin{equation}
\begin{split}
\partial_t E(u)+ \diver \bQ(u)\le G(u) E'(u) \quad \textrm{in }\mathbb{R}^{d+1}_+, \label{Kruz2x}
\end{split}
\end{equation}
where $E$ is an arbitrary smooth convex function (entropy) and $\bQ$ (flux) satisfies
$$
 \bQ'(u)=E'(u)\bF'(u).
$$
The system \eqref{E1} completed by \eqref{Kruzx} leads to  uniqueness and existence of a solution provided that the flux $\bF$ and the source term $G$ are globally\footnote{We refer here to the original paper of \cite{Kr70}, where the case of bounded initial data is studied, and for more general result with $L^1$ data, growth assumptions on $\bF$ and the H\"{o}lder continuity of the flux only at the point zero, see \cite{Sz89a}.} Lipschitz continuous, $G(0)=0$ and $u_0\in L^1(\mathbb{R}^d)$. Later on, it was observed in \cite{PT91}  that the essential properties of the system \eqref{E1} can be deduced from the equivalent  \emph{kinetic formulation} of  conservation laws, which has the form
\begin{align}
\left.\begin{aligned}
\partial_t \chi(\xi,u) + \bF'(\xi)\cdot \nabla_x \chi(\xi,u) + G(\xi)\partial_{\xi} \chi(\xi,u)-G(0)\delta_0(\xi)&=\partial_{\xi} m \textrm{ in }\mathbb{R}_+^{d+2}
\end{aligned}\right.
\label{E1kx}
\end{align}
where $m$ is a nonnegative measure, $\delta_0$ is the Dirac measure and  $\chi:\R^2\to\{1,-1,0\}$ is defined as follows
\begin{equation}\label{chi}
\chi(z,u):=
\left\{
\begin{aligned}
&1 &&\textrm{if }  0<z<u,\\
&-1 &&\textrm{if }\  u<z<0,\\
&0 &&\textrm{otherwise.}
\end{aligned}
\right.
\end{equation}
Indeed, it was shown in\footnote{Although the proof of such equivalence is given there without the source term $G$, it is a direct consequence of the method developed there. Moreover, we shall provide a rigorous proof of such equivalence in much more general setting here.} \cite{PT91} that $u$ solves \eqref{E1} completed by the entropy inequality \eqref{Kruzx} if and only if there exists a nonnegative measure $m$ such that  \eqref{E1kx} holds.
The kinetic equation is a linear transport equation for $\chi$, what simplifies a lot of considerations and even leads to several new results. Also the concept of the kinetic formulation is used for numerical schemes, see~\cite{Perthame02, Bo2002, MaPe2003}.

On the other hand, it is evident that both methods - kinetic formulation or Kru\v{z}kov entropies  - heavily rely on the smoothness of the flux and the source term, which are required to be Lipschitz or at least absolutely continuous, cf.~\cite{Kr70,Perthame02}.

As the starting point to overcome the problem of the lack of continuity, it was pointed out in \cite{BuGwMaSw2011} (see also \cite{Carrillo2003} for a similar approach) that for smooth fluxes $\bF$ one can introduce a huge class of equivalent formulations to the Kru\v{z}kov entropy formulation. Indeed, denoting for any smooth increasing function (bijection) $U:\mathbb{R}\to \mathbb{R}$ a new flux $\bA:=\bF\circ U$ and a new source term $a:=G\circ U$ it was shown (without the presence of the source term, which will be in particular  carefully treated in this paper) in~\cite{BuGwMaSw2011} that  $u$ solves \eqref{E1} and \eqref{Kruzx} if and only if  there exists $v$ solving for all $k\in \mathbb{R}$ the following problem:
\begin{align}
\partial_t U(v) + \diver \bA(v)&=a(v) &&\textrm{in } \mathbb{R}^{d+1}_+,\label{E1B}\\
U(v(0))&=u_0 &&\textrm{in }\mathbb{R}^d,\label{E12B}\\
\partial_t |U(v)-U(k)| \!+ \diver (\sgn (v-k) (\bA(v)-\bA(k)))&\le a(v)\sgn (v-k)\!\!\!\! &&\textrm{in }
\mathbb{R}^{d+1}_+.\label{KruzB}
\end{align}
Moreover, it follows from  \eqref{KruzB}, that such solutions satisfy (provided that $u$ is bounded, otherwise one needs to prescribe further growth assumptions on $E$, $a$ and $\bA$)
\begin{align}
\partial_t Q_U(v)+ \diver (Q_{\bA}(v))&\le a(v)E'(v)  &&\textrm{in }
\mathbb{R}^{d+1}_+\label{KruzC}
\end{align}
for arbitrary convex $E\in \mathcal{C}^1(\mathbb{R})$ with fluxes $Q_U$ and $Q_{\bA}$ given by
$$
Q'_U=U'E', \qquad Q'_{\bA}=\bA'E'.
$$

In analogue to equivalence of \eqref{Kruzx} and \eqref{E1kx} it can be  expected that  a kinetic formulation has  now  the following form
\begin{align}
 U'(\xi)\partial_t\chi(\xi,v) + \bA'(\xi)\cdot \nabla_x \chi(\xi,v) + a(\xi)\partial_{\xi} \chi(\xi,u)-a(0)\delta_0(\xi)&=\partial_{\xi} m \textrm{ in }\mathbb{R}_+^{d+2}
\label{E1kB}
\end{align}
with $m$ being again a nonnegative measure,   possibly different to the one appearing in \eqref{E1kx}. The fact that \eqref{E1kB} is indeed   equivalent reformulation of \eqref{KruzB} is one of the key results of the present paper, see Theorem~\ref{TT1}.
Although it is not at all obvious at the moment, but apparently both formulations \eqref{KruzB} and  \eqref{E1kB} will require only continuity of the corresponding fluxes, i.e., $\bA$, $U$ and $a$. Thus an immediate conclusion is that one could treat the case of discontinuous fluxes and  the source term unless it is possible to find a continuous parametrization of the discontinuity. This  means that we can   find  an appropriate  $U$ such that $\bF\circ U$ and $G\circ U$ can be understood as continuous functions. Having in mind uniqueness of solutions in a certain class, we expect that some additional property of $\bF$ and $G$ might be required. In the original attempt Kru\v{z}kov assumed that both $\bF$ and $G$ are Lipschitz continuous, which is surely the property that we want to avoid here. Later, however, still for continuous fluxes, it was pointed out in \cite{Sz89a} that for flux $\bF$ one needs only the H\"{o}lder continuity at zero (which somehow corresponds to the H\"{o}lder continuity for boundary values),  and concerning the right hand side that it is Lipschitz and globally continuous, see the counterexamples for the uniqueness in \cite{Sz89a} for flux not being smooth enough at zero. The generalization of these results is that we are able to treat discontinuous quantities with the only proviso that the flux $\bF$ is H\"{o}lder continuous at zero and that $G$ is only \emph{ one-side Lipschitz}, i.e., it is a sum of a Lipschitz and a monotone function.

\subsection{Formulation of the problem with discontinuities}

Finally, we shall introduce assumptions on $\bF$ and $G$  more precisely and formulate  main results of the paper. First, it seems to be more convenient not to consider $\bF$ and $G$ as  functions of $u$ but rather ``identify" them with  proper subsets of $\mathbb{R}^{d+1}$ and $\mathbb{R}^2$ respectively, the \emph{graphs}
$$
\mathcal{A}_{\bF}\subset \mathbb{R}^d \times \mathbb{R}, \qquad \mathcal{A}_G \subset \mathbb{R}\times \mathbb{R}.
$$
The assumption that  $G$ is  one-side Lipschitz in the language of the graphs means that there exist a constant $L>0$ such that for every $(g_1,u_1), (g_2,u_2)\in\mA_{G}$ the condition $\sgn(u_1-u_2)(g_1-g_2)\le L|u_1-u_2|$ holds.

For such given graphs $\mA_{\bF}$ and $\mA_G$, we introduce the generalization of \eqref{E1} in the following way: Find $u:\mathbb{R}^{d+1}_+ \to \mathbb{R}$, $\bF:\mathbb{R}^{d+1}_+ \to \mathbb{R}^d$ and $G:\mathbb{R}^{d+1}_+ \to \mathbb{R}$ solving
\begin{align}
\left.\begin{aligned}
\partial_t u + \diver \bF &=G, \quad (\bF,u)\in \mathcal{A}_{\bF},\quad   (g,u)\in \mathcal{A}_G &&\textrm{in }\mathbb{R}_+^{d+1},\\
u(0)&=u_0 &&\textrm{in }\mathbb{R}^{d}.
\end{aligned}\right.
\label{E1g}
\end{align}
The key assumption imposed on the graphs $\mA_{\bF}$ and $\mA_G$ is that they admit the \emph{continuous (monotone) parameterizations}, i.e., we require that there exists a~Lipschitz continuous nondecreasing function $U:\mathbb{R}\to \mathbb{R}$  such that $U(\mathbb{R})=\mathbb{R}$, and continuous functions $\bA:\mathbb{R}\to \mathbb{R}^d$ and $a:\mathbb{R}\to \mathbb{R}$ such that for all $s\in \mathbb{R}$ there holds
\begin{equation}
(\bA(s),U(s))\in \mathcal{A}_{\bF},\qquad (a(s),U(s))\in \mathcal{A}_G. \label{param}
\end{equation}
It is evident that without loss of generality we may assume that $U(0)=0$. Moreover, in case  the graphs can be represented by continuous functions, we can simply set $U(s)=s$, $\bA(s)=\bF(s)$ and $a(s)=G(s)$. It is also worth of noticing here, that the assumption \eqref{param} directly leads to the certain maximality of the corresponding graphs, or in other words, in case of discontinuous $\bF$ and $G$, we fill the possible jump by the whole interval assuming then the multi-valued functions.

Under the assumption \eqref{param}, we finally define several notions of entropy solution to \eqref{E1g}. For simplicity, we restrict ourselves only to the case when the solution is assumed to be locally bounded. In case we consider that it is only locally an $L^1$ quantity, we have to impose more growth assumptions at infinity on $U$, $a$ and $\bA$, see also \eqref{AF_mon}--\eqref{Ag_mon} and Subsection~\ref{subsec1.3} for existence result with $L^1$ data. First, we focus on the definition of the entropy weak solution, which corresponds to the inequality \eqref{KruzC}. Let us for simplicity fix $T<\infty$, which can   be arbitrary.
\begin{Def}[Entropy formulation]
\label{D1}
Let $u_0 \in L^{\infty}_{loc}(\mathbb{R}^d)$. Assume that graphs $\mA_{\bF}$ and $\mA_G$ admit a continuous parametrization \eqref{param}. We say that a function $u:\mathbb{R}^{d+1}_+\to\R$
 is an entropy solution to  \eqref{E1g} and \eqref{param} with the initial data $u_0$ if for every $T<\infty$ it holds $u\in L^{\infty}(0,T; L^{\infty}_{loc}(\mathbb{R}^d))$ and:
\begin{itemize}
\item[1)]
There exists a measurable $v:\mathbb{R}^{d+1}_+\to \mathbb{R}$ and $v_0:\mathbb{R}^d\to \mathbb{R}$ such that $u(t,x)=U(v(t,x))$ almost everywhere in $\mathbb{R}^{d+1}_+$ and  $u_0(x)=U(v_0(x))$ almost everywhere in $\mathbb{R}^d$;
\item[2)] For  all smooth convex functions $E:\mathbb{R}\to \mathbb{R}$ fulfilling  $E''\in \mathcal{D}(\mathbb{R})$, and for all nonnegative $\varphi \in \mathcal{D}(-\infty,T; \mathcal{D}(\mathbb{R}^d))$ there holds
\begin{equation}
\begin{aligned}
&\int_{\mathbb{R}^{d+1}_+} Q_U(v(t,x))\partial_t \varphi(t,x) + Q_{\bA}(v(t,x))\cdot \nabla_x \varphi(t,x)\dx \dt \\
&\ge -\int_{\mathbb{R}^{d+1}_+} a(v(t,x))E'(v(t,x))\varphi(t,x)\dx \dt -\int_{\mathbb{R}}Q_U(v_0(x))\varphi(0,x)  \dx, \label{KruzDF}
\end{aligned}
\end{equation}
\end{itemize}
where  fluxes $Q_U$ and $Q_{\bA}$ are given as
\begin{equation}\label{DFflux}
Q'_U=U'E', \qquad Q'_{\bA}=\bA'E'.
\end{equation}
\end{Def}
Note here that although there appear derivatives of $\bA$ and $U$ in \eqref{DFflux}, we just need a continuity of these quantities. Indeed, $Q_U$ and $Q_{\bA}$ can be equivalently (up to an additive constant) defined as
$$
\begin{aligned}
Q_U(s)=-\int_0^s U(t)E''(t) \dt + U(s)E'(s), \quad Q_{\bA}(s)=\int_0^s \bA(t)E''(t) \dt + \bA(s)E'(s).
\end{aligned}
$$
In addition, since $u$ and $u_0$ are locally bounded, it follows from the properties of $U$ that also $v$ and $v_0$ are locally bounded and therefore all integrals in \eqref{KruzDF} are well defined.

Next, we define a notion of the kinetic solution. Recall here the definition of the indicator function $\chi$ given in \eqref{chi}. First, we start with a definition that requires smoother $U$, $a$ and $\bA$.
\begin{Def}[Kinetic formulation I]
\label{D2}
Let $u_0 \in L^\infty_{loc}(\mathbb{R}^d)$. Assume that graphs $\mA_{\bF}$ and $\mA_G$ admit a continuous parametrization \eqref{param}. Moreover, assume that $U,\bA \in W^{1,1}_{loc}(\mathbb{R})$. We say that a function $u:\mathbb{R}^{d+1}_+\to\R$
 is a kinetic solution to  \eqref{E1g} and \eqref{param} with the initial data $u_0$  if for every $T<\infty$ it holds $u\in L^{\infty}(0,T; L^{\infty}_{loc}(\mathbb{R}^d))$ and:
\begin{itemize}
\item[1)] There exists $v:\mathbb{R}^{d+1}_+\to \mathbb{R}$ and $v_0:\mathbb{R}^d\to \mathbb{R}$ such that $u(t,x)=U(v(t,x))$ almost everywhere in $\mathbb{R}^{d+1}_+$ and $u_0(x)=U(v_0(x))$ almost everywhere in $\mathbb{R}^d$
\item[2)] There exists nonnegative locally (with respect to $(t,x)$) compactly  supported (with respect to $\xi)$\footnote{ We mean that for every compact set $K\subset\R\times\R^d$ the set $\{\xi:m(t,x)\neq 0, (t,x)\in K\}$ is compact.} measure $m\in \mathcal{M}(\mathbb{R}^{d+2}_+)$ such that for all $\varphi \in \mathcal{D}(\mathbb{R}^{d+2})$ there holds
\begin{align}
\left.\begin{aligned}
&\langle m,\partial_{\xi}\varphi\rangle=\int_{\mathbb{R}^{d+1}} U'(\xi)\chi(\xi,v_0(x))\varphi(0,x,\xi) \dxi\dx\\
&+\int_{\mathbb{R}^{d+2}_+} \Big(( U'(\xi)\chi(\xi,v(t,x))\partial_t \varphi(t,x,\xi)  + \bA'(\xi)\chi(\xi,v(t,x))\cdot \nabla_x \varphi(t,x,\xi)\\
&\qquad + a'(\xi) \chi(\xi,v(t,x))\varphi(t,x,\xi) + a(\xi) \chi(\xi,v(t,x))\partial_{\xi}\varphi(t,x,\xi)\Big)  \dxi \dx\dt\\
&+\int_{\mathbb{R}^{d+1}_+} a(0)\varphi(t,x,0) \dx \dt.
\end{aligned}\right.
\label{E1kBweak}
\end{align}
\end{itemize}
\end{Def}
The appearance of derivatives of $\bA$, $a$ and $U$ suggests that the assumption of an absolute continuity is needed to provide the definition to be meaningful. Apparently the particular form of $\chi$,  in particular that the derivative of $\chi$ is a measure allows to integrate by parts in
\eqref{E1kBweak} to obtain the following ``equivalent" definition
\begin{Def}[Kinetic formulation II]
\label{D3}
Let $u_0 \in L^\infty_{loc}(\mathbb{R}^d)$. Assume that graphs $\mA_{\bF}$ and $\mA_G$ admit a continuous parametrization \eqref{param}.
We say that a function $u:\mathbb{R}^{d+1}_+\to\R$
 is a kinetic solution to  \eqref{E1g} and \eqref{param} with the initial data $u_0$  if for every $T<\infty$ it holds $u\in L^{\infty}(0,T; L^{\infty}_{loc}(\mathbb{R}^d))$ and:
\begin{itemize}
\item[1)] There exist $v:\mathbb{R}^{d+1}_+ \to \mathbb{R}$ and $v_0:\mathbb{R}^d\to \mathbb{R}$ such that $u(t,x)=U(v(t,x))$ almost everywhere in $\mathbb{R}^{d+1}_+$ and $u_0(x)=U(v_0(x))$ almost everywhere in $\mathbb{R}^d$;
\item[2)] There exists nonnegative locally (with respect to $(t,x)$) compactly  supported (with respect to $\xi)$ measure $m\in \mathcal{M}(\mathbb{R}^{d+2}_+)$ such that for all $\varphi \in \mathcal{D}(\mathbb{R}^{d+2})$ there holds
\begin{align}
\left.\begin{aligned}
\langle m,\partial_{\xi}\varphi\rangle&=-\int_{\mathbb{R}^{d+1}} U(\xi)\chi(\xi,v_0)\partial_{\xi}\varphi(0,x,\xi) \dxi\dx\\
&-\int_{\mathbb{R}^{d+2}_+} U(\xi)\chi(\xi,v(t,x))\partial_{t,\xi} \varphi(t,x,\xi)\dxi\dx \dt\\
&-\int_{\mathbb{R}^{d+1}_+}\langle U(\xi)\partial_t \varphi(t,x,\xi),\partial_{\xi}\chi(\xi,v(t,x))\rangle  \dx\dt\\
&-\int_{\mathbb{R}^{d+2}_+} \bA(\xi)\chi(\xi,v(t,x))\cdot \nabla_{x} \partial_{\xi}\varphi(t,x,\xi) \dxi\dx\dt\\
&-\int_{\mathbb{R}^{d+1}_+}\langle\bA(\xi)\cdot \nabla_x \varphi(t,x,\xi),\partial_{\xi}\chi(\xi,v(t,x))\rangle\dx\dt\\
 &+\int_{\mathbb{R}^{d+1}_+} a(0)\varphi(t,x,0)   -\langle a(\xi)\varphi(t,x,\xi), \partial_{\xi}\chi(\xi,v(t,x))\rangle\dx \dt\\
  &+\int_{\mathbb{R}^{d+2}_+} a(\xi) \chi(\xi,v(t,x))\partial_{\xi}\varphi(t,x,\xi)  \dxi \dx\dt.
\end{aligned}\right.
\label{E1kBweak2}
\end{align}
\end{itemize}
\end{Def}
The first key result of the paper is the following statement.
\begin{Theorem}[Equivalence of various formulations]\label{TT1}
Let $u_0 \in L^\infty_{loc}(\mathbb{R}^d)$. Assume that graphs $\mA_{\bF}$ and $\mA_G$ admit a continuous parametrization \eqref{param}. Then $u$ is an entropy weak solution in the sense of Definition~\ref{D1} if and only if it is a kinetic solution in the sense of  Definition~\ref{D3}. Moreover, if $U, a, \bA \in W^{1,1}_{loc}(\mathbb{R})$ then  Definitions~\ref{D2} and \ref{D3} are equivalent.
\end{Theorem}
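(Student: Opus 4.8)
The plan is to regard the indicator $\chi$ from \eqref{chi} as the dictionary between the pointwise entropy picture and the kinetic one. Two elementary facts drive everything. First, for fixed $v$ the distributional derivative $\partial_\xi\chi(\xi,v)=\delta_0(\xi)-\delta_v(\xi)$ is a bounded measure, so all the pairings $\langle\,\cdot\,,\partial_\xi\chi\rangle$ appearing in \eqref{E1kBweak2} make sense. Second, integrating $\chi$ against $E'$ recovers entropies and their fluxes: up to additive constants $E(v)=\int_{\mathbb R}E'(\xi)\chi(\xi,v)\dxi$ and, by \eqref{DFflux}, $Q_U(v)=\int_{\mathbb R}U'(\xi)E'(\xi)\chi(\xi,v)\dxi$, $Q_{\bA}(v)=\int_{\mathbb R}\bA'(\xi)E'(\xi)\chi(\xi,v)\dxi$, while $\int_{\mathbb R}a(\xi)E'(\xi)\partial_\xi\chi(\xi,v)\dxi=a(0)E'(0)-a(v)E'(v)$. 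I would prove the two asserted equivalences separately: Definition~\ref{D1}$\Leftrightarrow$Definition~\ref{D3} under mere continuity of $U,a,\bA$, and Definition~\ref{D2}$\Leftrightarrow$Definition~\ref{D3} under the additional hypothesis $U,a,\bA\in W^{1,1}_{loc}$.

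The equivalence of Definition~\ref{D2} and Definition~\ref{D3} is the easier one and keeps the \emph{same} measure $m$: the two right-hand sides \eqref{E1kBweak} and \eqref{E1kBweak2} are simply two ways of writing the weak form of the formal kinetic identity \eqref{E1kB}, related by integration by parts in $\xi$. Whenever the parametrizations are Sobolev one may use $U'(\xi)\chi=\partial_\xi\bigl(U(\xi)\chi\bigr)-U(\xi)\partial_\xi\chi$ (and the analogues for $\bA,a$); substituting this into every term of \eqref{E1kBweak} that carries a derivative of the parametrization splits it into a term in which $U(\xi)$, $\bA(\xi)$ or $a(\xi)$ is paired with a derivative of $\varphi$ plus a term paired with the measure $\partial_\xi\chi$, and collecting the pieces reproduces \eqref{E1kBweak2}, the contributions at $\xi=\pm\infty$ vanishing since $\varphi$ has compact support. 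The only point to be verified is that this manipulation is licit, which is exactly where $U,a,\bA\in W^{1,1}_{loc}$ enters, turning $U',a',\bA'$ into genuine $L^1_{loc}$ densities.

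For Definition~\ref{D3}$\Rightarrow$Definition~\ref{D1} I would test \eqref{E1kBweak2} with product functions $\varphi(t,x,\xi)=E'(\xi)\psi(t,x)$, where $E$ is smooth convex with $E''\in\mathcal D(\mathbb R)$ and $0\le\psi\in\mathcal D(-\infty,T;\mathcal D(\mathbb R^d))$. Then $\partial_\xi\varphi=E''(\xi)\psi$, so the left-hand side equals $\langle m,E''(\xi)\psi\rangle\ge0$ because $m\ge0$, $E''\ge0$ and $\psi\ge0$; carrying out the $\xi$-integration on the right-hand side via the moment identities above collapses \eqref{E1kBweak2} exactly onto \eqref{KruzDF}, the sign of $m$ producing the correct direction of the inequality. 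For the converse Definition~\ref{D1}$\Rightarrow$Definition~\ref{D3} I would first note that the affine entropies $E(\xi)=\pm\xi$ turn \eqref{KruzDF} into an equality, which is nothing but the weak form of the balance law \eqref{E1B}--\eqref{E12B}; consequently the entropy defect
\[\Lambda_E(\psi):=\int_{\mathbb R^{d+1}_+}\bigl(Q_U(v)\partial_t\psi+Q_{\bA}(v)\cdot\nabla_x\psi+a(v)E'(v)\psi\bigr)\dx\dt+\int_{\mathbb R}Q_U(v_0)\psi(0,\cdot)\dx\]
depends on $E$ only through $E''$ and is linear in it, while \eqref{KruzDF} reads $\Lambda_E(\psi)\ge0$ for every convex $E$ and every $\psi\ge0$. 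A representation argument then yields a single nonnegative measure $m$ in $(t,x,\xi)$ with $\Lambda_E(\psi)=\langle m,E''(\xi)\psi\rangle$, and reversing the computation of the previous direction shows that this $m$ satisfies \eqref{E1kBweak2}.

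The hard part is the direction Definition~\ref{D1}$\Rightarrow$Definition~\ref{D3}, i.e.\ producing $m$ and proving it is a genuine nonnegative Radon measure rather than merely a first-order distribution. The crux is the representation step: one must show that the family $\{\Lambda_E\}$, being linear and nonnegative in $E''\ge0$, is generated by one kinetic measure, $\Lambda_E(\psi)=\langle m,E''(\xi)\psi\rangle$; positivity for all convex $E$ and all $\psi\ge0$ then delivers $m\ge0$ through the density of the products $E''(\xi)\psi(t,x)$ among nonnegative test functions and a Riesz-type argument, while a quadratic entropy bounds its local mass and local boundedness of $v$ gives the compactness of its $\xi$-support. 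A secondary but genuine technicality pervading both implications is that the entropies of Definition~\ref{D1} have $E''$ compactly supported but $E'$ unbounded, whereas the kinetic test functions $E'(\xi)\psi$ in \eqref{E1kBweak2} must be compactly supported in $\xi$; this is resolved by truncating $E'$ far out in $\xi$ and using that $\chi(\cdot,v)$ (and, once built, $m$) vanishes for $|\xi|$ large on compact sets of $(t,x)$, so the cut-off leaves every integral unchanged. Finally, throughout one uses the normalization $U(0)=0$, which is what makes the Dirac source $a(0)\delta_0(\xi)$ in \eqref{E1kB}, equivalently the term $a(0)\varphi(t,x,0)$ in \eqref{E1kBweak}--\eqref{E1kBweak2}, appear with the correct weight.
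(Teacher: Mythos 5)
Your proposal is correct and, in two of the three pieces, coincides with the paper's argument: the equivalence of Definitions~\ref{D2} and \ref{D3} by integration by parts in $\xi$ (which the paper dismisses as almost obvious, for exactly the reason you give -- $W^{1,1}_{loc}$ makes $U',\bA',a'$ genuine $L^1_{loc}$ densities), and the direction kinetic~$\Rightarrow$~entropy via the test functions $E'(\xi)\tau_n(\xi)\psi(t,x)$, where your truncation of $E'$ is literally the paper's cutoff $\tau_n$, the term $\langle m, E''\tau_n\psi\rangle\ge 0$ is discarded by positivity, and the term with $\tau_n'$ dies for large $n$ by the local compactness of the $\xi$-support of $m$. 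The one genuine divergence is in the hard direction, entropy~$\Rightarrow$~kinetic: you obtain $m$ abstractly from the entropy-defect functional $\Lambda_E(\psi)$, which is a priori defined only on tensor products $E''\otimes\psi$, so your ``representation argument'' secretly needs a kernel-theorem type extension to produce a distribution on all of $\mathcal{D}(\mathbb{R}^{d+2})$ before the density of nonnegative tensor products can upgrade positivity-on-products to $m\ge 0$, and verifying \eqref{E1kBweak2} against general $\varphi$ then requires a further density step. The paper short-circuits all of this by \emph{defining} $m$ explicitly as the $\xi$-primitive of the kinetic operator applied to $\chi(\cdot,v)$ (formula \eqref{P1}): this is manifestly a distribution on every test function (since $\chi$ is BV), it satisfies $\partial_\xi m=$ kinetic left-hand side tautologically, the vanishing for $\xi$ below $-R(t,x)$ is immediate from the primitive starting at $-\infty$, the vanishing above $R(t,x)$ follows from the balance-law equality obtained with $E(s)=|s\pm k|$ (your affine-entropy observation in disguise), and positivity is checked on products $\psi_1(\xi)\psi_2(t,x)$ by setting $E'':=\psi_1$ and collapsing, via the integration-by-parts identity \eqref{int_p}, onto the left-hand side of \eqref{KruzDF} -- exactly the identification you propose. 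So your route works and is conceptually equivalent, but the explicit primitive buys the existence, the equation, and the support property of $m$ essentially for free, whereas in your version these are the points you would still have to write out carefully.
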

\begin{Rem}
If in addition $U$ is a strictly increasing function then $u$ is the standard  entropy weak solution satisfying \eqref{Kruz2x}.
This equivalence has already been established in previous papers, see \cite{BuGwGw13,BuGwMaSw2011,GwSwWiZi2014}, however for the conservation law without  the source term $a$ or $G$, respectively. Anyway, the presence of a source term  can be easily handled by the same technique.
\end{Rem}
We would like to point out that this is the first result showing the  the equivalence of the three notions of solutions for more general case than the classical level of conservation laws with continuous fluxes, in particular for the case, where the kinetic formulation in the form of \eqref{E1kx} is not well defined due to the possible discontinuity of $\bF$.

\subsection{Existence, uniqueness and independence of the parametrization}\label{subsec1.3}
The second key result of the paper is the existence and uniqueness of an entropy solution. However, to obtain such a result one is forced to add several assumptions on the flux and also on the source term. Therefore,  we shall require the certain H\"{o}lder continuity of $\mA_{\bF}$ at zero,  one side Lipschitz continuity of the graph $\mA_G$ and certain one side asymptotical boundedness of $\mA_G$ in terms of $U'$, i.e., we assume that there exist $L,\alpha>0$ such that
\begin{align}
|\bA(s)-\bA(0)|&\le L(|U(s)|+|U(s)|^{\alpha})&& \textrm{for all } s\in \mathbb{R}, \label{AF_mon}\\
a(s)-a(v) &\le L(U(s)-U(v)) &&\textrm{for all } s\ge v, \label{Ag_mon}\\
a(0)&=0, \label{Ag_0}\\
\frac{a(s)}{s}&\le LU'(s)&&\textrm{for all } s\in (-\infty,-L)\cup (L,\infty), \label{Ag_bound}
\end{align}
where the last inequality is understood in the sense of distribution in case $U\notin \mathcal{C}^1$. In fact, \eqref{Ag_bound} is exactly the condition appearing later in a~priori estimates, however can be replaced be the following stronger one
\begin{equation}\label{equiv_c}
\begin{split}
\limsup_{|s|\to \infty}\frac{a(s)}{s} <\infty, \qquad \liminf_{|s|\to \infty} U'(s)>0.
\end{split}
\end{equation}
Notice here, that \eqref{equiv_c} directly implies \eqref{Ag_bound}. In addition the set of assumptions \eqref{AF_mon}--\eqref{Ag_bound} can be further relaxed. In case, we assume that the initial data are globally bounded and we want to deal only with  globally bounded solutions, then the first part of the right hand side in \eqref{AF_mon} is not needed. On the other hand, if the initial data belong only to $L^1(\mathbb{R}^d)$ (and so we cannot hope for a bounded solution), then the assumption \eqref{AF_mon} is essential, however the last assumption \eqref{Ag_bound} is irrelevant. Note that as $\mA_G$ is one side Lipschitz, then conditions \eqref{Ag_mon} and \eqref{Ag_bound} are automatically satisfied for every Lipschitz parametrization $U$.

Finally, to prove also independence of the solution of the graph representation, we introduce a notion of the \emph{equivalent} parametrization, i.e., we say that two parametrizations $(U^i,\bA^i,a^i)$ with $i=1,2$ are equivalent if and only if there exists a strictly increasing $\tilde{U}(\R)\in \mathcal{C}^1(\R)$ with a continuous inverse such that for all $s\in \R$ there holds
\begin{equation} \label{eqpa}
U^1(\tilde{U}(s))=U^2(s), \quad \bA^1(\tilde{U}(s))=\bA^2(s), \quad a^1(\tilde{U}(s))=a^2(s).
\end{equation}
The second key result of the paper is the following.
\begin{Theorem}[Existence and uniqueness]
Let $\mA_{\bF}$ and $\mA_{G}$ admit a continuous parametrization $(U,\bA,a)$ satisfying \eqref{param}. In addition, assume that the parametrization satisfies \eqref{AF_mon}--\eqref{Ag_bound} with $\alpha\ge \frac{d-1}{d}$. Then for any $u_0\in L^1(\R^d)\cap L^{\infty}(\R^d)$ there exists an entropy weak solution $u\in \mathcal{C}(0,T;L^1(\R)) \cap L^{\infty}(0,T; L^{\infty}(\R^d))$ in sense of Definition~\ref{D1}. Moreover, let $(U^1,\bA^1,a^1)$ and $(U^2,\bA^2,a^2)$ be two equivalent parametrizations, i.e., fulfill \eqref{eqpa}, and let $u^1$ and $u^2$ be two corresponding entropy weak solutions with initial data $u_0$. Then $u^1=u^2$ almost everywhere in~$\R^{d+1}_+$. In~addition, if $a^1$ satisfies
\begin{equation}
a^1(s)-a^1(v) \le L(U(s)-U(v)) -\gamma(s-v) \textrm{ for all } s\ge v,
\label{strictnm}
\end{equation}
for some nonnegative $\gamma\in \mathcal{C}(\R)$ fulfilling
$$
\gamma(t)=0 \implies t=0,
$$
then $v$ from Definition~\ref{D1}, i.e., $v$ fulfilling $U(v)=u$, where $u$ is an entropy weak solution, is uniquely determined by $v_0$, which is assumed to satisfy $U(v_0)=u_0$.
\label{TT2}
\end{Theorem}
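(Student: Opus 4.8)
The plan is to obtain existence by the kinetic (Boltzmann/BGK) approximation announced in the introduction, and then to deduce both uniqueness and independence of the parametrization from an $L^1$-stability estimate, reading the entropy inequality \eqref{KruzDF} as an \emph{intrinsic} property of the physical quantity $u$ and the graphs $\mA_{\bF},\mA_G$ rather than of the chosen parametrization. First I would regularize: replace $U$ by a smooth strictly increasing $U_\eta(s)=U(s)+\eta s$ (so that $U_\eta$ is a bijection and $u=U_\eta(v)$ is an admissible change of variables) and mollify $\bA,a$, arranging that \eqref{AF_mon}--\eqref{Ag_bound} survive uniformly in $\eta$ and that $U_\eta,a_\eta,\bA_\eta\in\mathcal{C}^1$. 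Since $U_\eta$ is invertible, the problem reduces to a genuinely smooth balance law $\partial_t u_\eta+\diver(\bA_\eta\circ U_\eta^{-1})(u_\eta)=(a_\eta\circ U_\eta^{-1})(u_\eta)$, whose kinetic approximation by a BGK equation in the $\xi$-variable is solvable by a fixed-point argument and, as the collision parameter tends to $0$, produces a kinetic, hence by Theorem~\ref{TT1} an entropy, solution $u_\eta$.

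The heart of the existence proof is the set of $\eta$-uniform a priori estimates. The $L^\infty$ bound follows by comparison with the spatially homogeneous flow $\bar u'=(a\circ U^{-1})(\bar u)$, whose growth is controlled by the one-side Lipschitz condition \eqref{Ag_mon} together with \eqref{Ag_bound} (equivalently \eqref{equiv_c}), so that no finite-time blow-up occurs; \eqref{Ag_0} keeps $u\equiv 0$ away from the support and secures $L^1$-integrability. The $L^1$ bound and the $L^1$-equicontinuity in time, which yield the regularity $\mathcal{C}(0,T;L^1)$, follow from the Kru\v{z}kov $L^1$-contraction for the flux part combined with a Gronwall argument absorbing the source through its one-side Lipschitz constant $L$. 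Spatial compactness of $u_\eta$ comes from the velocity averaging lemma, and it is exactly here that the Hölder exponent at zero enters: the threshold $\alpha\ge\frac{d-1}{d}$ is what guarantees enough non-degeneracy of the symbol $\tau U'(\xi)+\bA'(\xi)\cdot k$ to upgrade weak to strong convergence of the macroscopic density. Letting $\eta\to 0$, one has $u_\eta\to u$ strongly in $L^1_{loc}$ while $v_\eta$ converges only in the Young-measure sense (because $U$ may be flat), and the entropy inequalities pass to the limit, producing an entropy measure-valued solution whose $U$-barycenter is $u$.

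To conclude that $u$ is a genuine entropy weak solution I would invoke the stability of measure-valued solutions announced in the introduction: a doubling-of-variables comparison at the measure-valued level shows that a solution issued from Dirac initial data stays concentrated in the physical variable, so the Young measure reduces and \eqref{KruzDF} holds. The same comparison applied between two entropy weak solutions gives $\frac{d}{dt}\norm{u^1-u^2}_{L^1}\le L\norm{u^1-u^2}_{L^1}$, hence $u^1=u^2$ by Gronwall. What makes this work across two \emph{equivalent} parametrizations is that, by \eqref{eqpa} with $\tilde U\in\mathcal{C}^1$ a diffeomorphism, the entropies $Q_U,Q_{\bA}$ transform correctly under $\tilde U$, so \eqref{KruzDF} is a statement about $u$ and the graphs alone; thus $u^1$ and $u^2$ solve the same intrinsic problem and must coincide.

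For the final assertion, the extra dissipation \eqref{strictnm} must be exploited: on a plateau of $U$ (where $U(s)=U(v)$) it forces $a^1(s)-a^1(v)\le-\gamma(s-v)$, i.e.\ a strictly dissipative ODE structure in the $\xi$-direction. Doubling the $v$-variable and tracking the defect measure $m$, the $\gamma$-term yields a strict contraction not only for $u=U(v)$ but for $v$ itself, collapsing the Young measure in $v$ to a Dirac and forcing $v_1=v_2$ whenever $U(v_{1,0})=U(v_{2,0})=u_0$. I expect the main obstacle to be the measure-valued-to-weak reduction under a degenerate (merely nondecreasing) $U$ and a non-monotone, one-side Lipschitz source, where the usual maximum principle and the clean Kru\v{z}kov doubling must be carefully adapted to the parametrized setting; a close second is pinning down the averaging lemma precisely at the threshold $\alpha\ge\frac{d-1}{d}$ needed for spatial compactness.
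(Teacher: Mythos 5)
Your overall architecture (BGK kinetic approximation with a regularized strictly increasing $U$, passage to an entropy measure-valued limit, a doubling/Kato stability inequality yielding Gronwall-type uniqueness, and the $\gamma$-term of \eqref{strictnm} collapsing $v$ itself) matches the paper's in outline, but the existence part contains a genuine gap. You obtain strong $L^1_{loc}$ compactness of $u_\eta$ from a velocity averaging lemma, attributing the threshold $\alpha\ge\frac{d-1}{d}$ to ``non-degeneracy of the symbol $\tau U'(\xi)+\bA'(\xi)\cdot k$''. No non-degeneracy is assumed in Theorem~\ref{TT2} — $\bA$ may be constant, linear, or flat on intervals (take $\bA\equiv 0$), in which case the symbol degenerates on sets of positive measure and averaging lemmas give nothing. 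The exponent $\alpha$ plays a completely different role in the paper: it is a Szepessy-type condition controlling the flux at \emph{spatial infinity} for $L^1$ data. In the Kato inequality tested with cutoffs $\psi_R$, the boundary term obeys $I(R)\le \frac{C}{R}\int_0^T\int_{\R^d}\langle|U(k)|,\nu\rangle + C R^{-d(\alpha-\frac{1}{d'})}\bigl(\int_0^T\int_{\R^d\setminus B_R}\langle|U(k)|,\nu\rangle\bigr)^{\alpha}$ via \eqref{AF_mon}, and $\alpha\ge\frac{1}{d'}=\frac{d-1}{d}$ together with the a priori bound \eqref{apL1} forces $I(R)\to 0$. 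No strong compactness is ever proved or needed; the paper works with the Young measure throughout.

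The second, related gap: even granting strong convergence of $u_\eta$, Definition~\ref{D1} demands a measurable $v$ with $u=U(v)$ and the source evaluated as $a(v)E'(v)$ in \eqref{KruzDF}; on a plateau of $U$ the Young measure of $v_\eta$ can spread over the plateau, where the non-monotone $a$ varies, and $\langle aE',\nu\rangle$ is then not of the form $a(v)E'(v)$ for any single $v$. Your measure-valued stability step collapses $\nu$ only in the physical variable, i.e.\ it yields $\langle|U(k)-U(\lambda)|,\nu_{(t,x)}\otimes\nu_{(t,x)}\rangle=0$, which is insufficient. The paper's decisive device, absent from your plan, is to perturb the source by a bounded, \emph{strictly decreasing} $a_{\ell,m}$ with $a_{\ell,m}(0)=0$ before the kinetic limit: the self-applied stability inequality \eqref{resultin2} then also gives $\langle|a_{\ell,m}(k)-a_{\ell,m}(\lambda)|,\nu_{(t,x)}\otimes\nu_{(t,x)}\rangle=0$, and injectivity of $a_{\ell,m}$ forces $\nu_{(t,x)}=\delta_{v(t,x)}$; afterwards $a_{\ell,m}$ is removed through the comparison inequality \eqref{resultin1}, which produces monotone families $v^{\ell,m}\nearrow v^{\ell}\searrow v$ and a pointwise limit passage. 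In effect, the dissipation mechanism you reserve for the final assertion under \eqref{strictnm} is exactly what the paper injects artificially, and then removes, to get existence for a general one-side Lipschitz $a$. Two smaller remarks: your ``intrinsic'' reduction of the two-parametrization uniqueness via $E_1'=E_2'\circ\tilde{U}^{-1}$ (which does preserve convexity, since $\tilde{U}^{-1}$ is increasing) is a plausible shortcut, whereas the paper instead proves \eqref{resultin2} directly, coupling mollification rates $n(k)$ and $m(\lambda)$ through $\tilde{U}$; and the claimed $\mathcal{C}(0,T;L^1)$ regularity also requires an argument — the paper extracts it from the same stability inequality applied to $u(\cdot+h,\cdot)$ and $u$, plus a tail estimate at $t=0$ — which your sketch omits.
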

This result gives the final answer on the question of the existence and the uniqueness of a bounded entropy weak solution for problems, where the flux and the source term can be suitably parameterized. Furthermore, this result can be still relaxed onto the case when the initial data are not bounded in the following way.
\begin{Theorem}[Existence and uniqueness]
Let $\mA_{\bF}$ and $\mA_G$ admit a continuous parametrization $(U,\bA,a)$ satisfying \eqref{param}. In addition, assume that the parametrization satisfies \eqref{AF_mon}--\eqref{Ag_0}. Then for any $u_0\in L^1(\R^d)$ there exists an entropy weak solution $u\in \mathcal{C}(0,T;L^1(\R))$. Moreover, let $(U^1,\bA^1,a^1)$ and $(U^2,\bA^2,a^2)$ be two equivalent parameterizations, i.e., fulfill \eqref{eqpa}, and let $u^1$ and $u^2$ be two corresponding entropy weak solutions with initial data $u_0$. Then $u^1=u^2$ almost everywhere  in~$\R^{d+1}_+$. In addition, if $a^1$ satisfies \eqref{strictnm} then $v$ is uniquely determined by $v_0$, which is assumed to satisfy $U(v_0)=u_0$.
\label{TT3}
\end{Theorem}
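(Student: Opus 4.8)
The plan is to reduce Theorem~\ref{TT3} to the bounded-data result of Theorem~\ref{TT2} by truncating the initial datum, running both the convergence of the approximations and the uniqueness on the single engine of the $L^1$-stability estimate with respect to data (the measure-valued stability announced in the introduction). Observe first that, since the graph $\mA_G$ is one-side Lipschitz (this is \eqref{Ag_mon}), the condition \eqref{Ag_bound} holds automatically for the Lipschitz parametrization $U$; hence the triple $(U,\bA,a)$ together with any bounded datum meets the hypotheses of Theorem~\ref{TT2}.

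For existence I would set $u_0^n:=T_n(u_0)$, the truncation of $u_0$ at height $n$, so that $u_0^n\in L^1(\mathbb{R}^d)\cap L^\infty(\mathbb{R}^d)$ and $u_0^n\to u_0$ in $L^1(\mathbb{R}^d)$. Theorem~\ref{TT2} then provides, for each $n$, a bounded entropy weak solution $u^n=U(v^n)$. Because the source is one-side Lipschitz in $u$ with constant $L$, the stability estimate underlying Theorem~\ref{TT2} gives the Gronwall bound
$$
\|u^n(t)-u^m(t)\|_{L^1(\mathbb{R}^d)}\le e^{Lt}\,\|u_0^n-u_0^m\|_{L^1(\mathbb{R}^d)},\qquad t\in(0,T),
$$
so $\{u^n\}$ is Cauchy in $\mathcal{C}(0,T;L^1(\mathbb{R}^d))$ and converges to some $u\in\mathcal{C}(0,T;L^1)$ carrying the correct initial datum.

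It then remains to pass to the limit in the entropy inequality~\eqref{KruzDF}. Fixing a convex $E$ with $E''\in\mathcal{D}(\mathbb{R})$, the fluxes $Q_U,Q_{\bA}$ grow at infinity only like $U$ and $\bA$, and on the (spatially compact) support of the test function the growth assumption \eqref{AF_mon} bounds $|\bA(v^n)|$ by $C(1+|u^n|+|u^n|^{\alpha})\le C(1+|u^n|)$, which is equiintegrable thanks to the $L^1$-convergence of $u^n$; the source $a(v^n)E'(v^n)$ is controlled in the same manner through \eqref{Ag_mon}--\eqref{Ag_0}. The genuine difficulty is that $v^n$ itself need not converge, since $U$ may have flat parts and $v$ is not a function of $u$. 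I would therefore take the limit in the sense of the Young measures generated by $v^n$, obtaining an entropy measure-valued solution, and then invoke the stability of measure-valued solutions to collapse it to a genuine entropy weak solution $u=U(v)$ in the sense of Definition~\ref{D1}.

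For the uniqueness and the independence of the parametrization I would apply the same stability result directly. Two $L^1$ entropy solutions sharing the datum satisfy $\|u^1(t)-u^2(t)\|_{L^1}\le e^{Lt}\|u_0^1-u_0^2\|_{L^1}=0$, whence $u^1=u^2$; for two equivalent parametrizations, composing~\eqref{KruzDF} with the change of variable $\tilde U$ from~\eqref{eqpa} shows that $u^1$ and $u^2$ solve the same problem, so the comparison again forces $u^1=u^2$. Finally, under the strict dissipation~\eqref{strictnm} the comparison retains an extra nonnegative term $\int_0^T\!\!\int_{\mathbb{R}^d}\gamma(v^1-v^2)\,\dx\,\dt$ on the favourable side; as its companion left-hand side vanishes for equal data, this term must vanish, and $\gamma(t)=0\Rightarrow t=0$ yields $v^1=v^2$, so $v$ is determined by $v_0$. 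The principal obstacle throughout is the absence of an $L^\infty$ bound: one must keep the entropy fluxes and the source integrable on compact sets---precisely where \eqref{AF_mon} becomes indispensable---and one must resolve the multivalued variable $v$, which is why both the passage to the limit and the comparison are carried out at the level of the measure-valued formulation rather than for the solutions themselves.
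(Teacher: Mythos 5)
Your skeleton --- truncate $u_0$, solve via Theorem~\ref{TT2}, use the one-side Lipschitz structure to get the weighted contraction $\|u^n(t)-u^m(t)\|_{L^1}\le e^{Lt}\|u_0^n-u_0^m\|_{L^1}$, and pass to the limit --- is indeed the natural reduction (the paper itself only announces the unbounded case at the start of Section~\ref{S4} and never writes it out, so there is no detailed proof to match). But your collapse step has a genuine gap. When you form the Young measure $\nu$ generated by $v^n$ and ``invoke the stability of measure-valued solutions to collapse it,'' note what the self-comparison (\eqref{resultin1} with $a^1=a^2=a$, $\nu^1=\nu^2=\nu$) actually yields: only $\langle|U(\lambda)-U(k)|,\nu\otimes\nu\rangle=0$, i.e.\ $U$ is constant on $\supp\nu_{(t,x)}$ --- \emph{not} $\nu_{(t,x)}=\delta_{v(t,x)}$. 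In the paper's proof of Theorem~\ref{TT2}, Diracness comes exclusively from the auxiliary strictly decreasing perturbation $a_{\ell,m}$ (this is what produces \eqref{Szep1} and the contradiction argument), and in the subsequent limit $\ell,m\to\infty$ it is again the strict monotonicity of $a_{\ell,n}$ that forces $v^{\ell,m}\le v^{\ell,n}$ and hence pointwise convergence of the $v$'s. Your plan drops this device, and here that is fatal: on the flat parts of $U$ the quantities $\bA$ and $a$ genuinely vary (this is the whole point of the discontinuous setting), so without Diracness your limit inequality contains $\langle Q_{\bA}(\lambda),\nu\rangle$ and $\langle a(\lambda)E'(\lambda),\nu\rangle$, which are not of the form $Q_{\bA}(v)$, $a(v)E'(v)$ for any measurable selection $v$, and you never reach Definition~\ref{D1}. (Only $Q_U(v^n)$ converges for free, since $Q_U'=U'E'$ makes $Q_U(v)$ a function of $u=U(v)$.) The repair is to perform the truncation of the datum \emph{inside} the paper's approximation cascade, keeping $a_{\ell,m}$ and ordering the limits so that the $v$'s converge monotonically, rather than truncating after the fact.

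Two further points need attention. First, Definition~\ref{measure} and Theorem~\ref{KatoT} are formulated for measures valued in $\mathcal{P}_0(\R)$, i.e.\ compactly supported, and the proof of Theorem~\ref{KatoT} uses $\supp\nu_{(t,x)}\subset[-R,R]$ (entropies $|\lambda\pm R|$) in an essential way. With merely $L^1$ data the limit measure of $v^n$ is tight (by Chebyshev, since $U(\pm K)\to\pm\infty$) but not uniformly compactly supported, and likewise $\delta_{v^i}$ for two merely-$L^1$ solutions are not admissible objects for Theorem~\ref{KatoT} as stated; so both your ``collapse'' and your direct uniqueness comparison silently require an extension of the stability theorem with tail estimates, which the proposal does not supply. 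Second, there is a hypothesis mismatch: Theorem~\ref{TT2} assumes $\alpha\ge\frac{d-1}{d}$, and the same restriction is what makes the annulus term $I(R)$ vanish in every Gronwall-type argument over all of $\R^d$ (including the contraction you rely on), yet Theorem~\ref{TT3} only grants \eqref{AF_mon}--\eqref{Ag_0}. Your appeal to Theorem~\ref{TT2} for the truncated problems, and your Cauchy estimate, both use this unstated hypothesis; relatedly, your bound $|u^n|^{\alpha}\le C(1+|u^n|)$ is valid only for $\alpha\le 1$, whereas \eqref{AF_mon} permits $\alpha>1$. (Your opening observation that \eqref{Ag_bound} is automatic for a Lipschitz parametrization of a one-side Lipschitz graph is consistent with the paper's own remark and is not at issue.)
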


\subsection{Structure of the paper and auxiliary identities}
The paper is organized as follows. The remaining part of this section is devoted to some auxiliary identities frequently used in the paper. In Section~\ref{S2} we provide a proof of Theorem~\ref{TT1}. Then in Section~\ref{S3}, we prove Theorem~\ref{TT2} in detail and also sketch the proof of the generalized version stated in Theorem~\ref{TT3}.

Below we state several identities. The following first set of identities easily follows from the definition of $\chi$, see also \cite{Perthame02},
\begin{align}\label{prop1}
\partial_{\xi}\chi(\xi,u)&=\delta_0(\xi) -\delta_{u}(\xi),\\
\label{prop2}
\int_{\mathbb{R}}f'(\xi)\chi(\xi,u)\dxi&= f(u)-f(0),\\
\int_{\mathbb{R}}|\chi(\xi,v)-\chi(\xi,u)|\dxi&= |u-v|.\label{prop3}
\end{align}
Thus, using the integration by parts formula we can deduce that
\begin{equation}\label{int_p}
\int_{\mathbb{R}} \int_{-\infty}^{\xi} f(s)g'(\xi)\; ds \; d\xi=-\int_{\mathbb{R}} f(\xi)g(\xi) \; d\xi,
\end{equation}
which is valid for all $f\in L^1(\mathbb{R})$ and all $g\in W^{1,\infty}(\mathbb{R})$ satisfying $\lim_{\xi\to \infty}g(\xi)=0$.

\section{Proof of Theorem~\ref{TT1}}\label{S2}
In this section we provide the proof of Theorem~\ref{TT1}.
We skip showing the equivalence of Definition~\ref{D2} and Definition~\ref{D3} since it is almost obvious.
For simplicity we consider now $U$, $\bA$, $a$ to be locally Sobolev functions and focus on the equivalence of Definition~\ref{D1} and Definition~\ref{D2}. First, let us show that any entropy weak solution is also the kinetic solution. For this purpose, we define the distribution $m$ by the following formula
\begin{align}
\left.\begin{aligned}
\langle m, \varphi\rangle:=&-\int_{\R^{d+2}_+}\int_{-\infty}^{\xi}U'(s)\chi(s,v(t,x))\partial_t \varphi(t,x,\xi)\ds\dxi\dx\dt\\
 &-\int_{\R^{d+2}_+} \int_{-\infty}^{\xi}\bA'(s) \chi(s,v(t,x))\cdot \nabla_x\varphi(t,x,xi)\ds\dxi\dx\dt\\
  &+\int_{\R^{d+2}_+}\int_{-\infty}^{\xi}a(s)\partial_{s} \chi(s,v(t,x))\varphi(\xi,t,x)\ds\dxi\dx\dt\\
    &- \int_{\R^{d+1}}\int_{-\infty}^{\xi}U'(s)\chi(s,v_0(x))\varphi(0,x,\xi)\ds\dxi\dx\\
    & -\int_{\mathbb{R}^{d+2}_+} \int_{-\infty}^{\xi} a(0)\delta_0(s) \varphi(t,x,\xi) \ds\dxi\dx \dt.
\end{aligned}\right.
\label{P1}
\end{align}
Since $\chi$ is a BV function the distribution is well defined. Moreover, it is easily seen from the fact that $v$ and $v_0$ are locally bounded that $m\in (\mathcal{C}^{1}_0(\mathbb{R}^{d+2}))^*$. Next, since $u$ and $v$ belong to $L^{\infty}(0,T;L^{\infty}_{loc}(\mathbb{R}^d))$, we can define a finite $R(t,x):=|v(t,x)|$ (note here that the function $R$ is locally bounded) and then it directly follows from the definition of $\chi$, see \eqref{chi}, that if $\varphi(t,x,\xi)=0$ for all $\xi> -R(t,x)$ then all integrands in \eqref{P1} are identically zero and consequently $\langle m,\varphi\rangle =0$.  On the other hand assuming that $\varphi(t,x,\xi)=0$ for all $\xi\le R(t,x)$, we can easily extend the integration range in all integrals appearing in \eqref{P1} to obtain
\begin{align*}
\left.\begin{aligned}
\langle m, \varphi\rangle=&-\int_{\R^{d+2}_+}\int_{-\infty}^{\infty}U'(s)\chi(s,v(t,x))\partial_t \varphi(t,x,\xi)\ds\dxi\dx\dt\\
 &-\int_{\R^{d+2}_+} \int_{-\infty}^{\infty}\bA'(s) \chi(s,v(t,x))\cdot \nabla_x\varphi(t,x,\xi)\ds\dxi\dx\dt\\
  &+\int_{\R^{d+2}_+}\int_{-\infty}^{\infty}a(s)\partial_{s} \chi(s,v(t,x))\varphi(t,x,\xi)\ds\dxi\dx\dt\\
    &- \int_{\R^{d+1}}\int_{-\infty}^{\infty}U'(s)\chi(s,v_0(x))\varphi(0,x,\xi)\ds\dxi\dx\\
    & -\int_{\mathbb{R}^{d+2}_+} \int_{-\infty}^{\infty} a(0)\delta_0(s) \varphi(t,x,\xi) \ds\dxi\dx \dt\\
    =&-\int_{\R^{d+2}_+}U(v(t,x))\partial_t \varphi(t,x,\xi)+ \bA(v(t,x)) \cdot \nabla_x\varphi(t,x,\xi)\dxi\dx\dt\\
  &-\int_{\R^{d+2}_+}a(v(t,x))\varphi(t,x,\xi)\dxi\dx\dt- \int_{\R^{d+1}}U(v(t,x))\varphi(0,x,\xi)\dx=0,
\end{aligned}\right.
\end{align*}
where for the second equality we used identities \eqref{prop1}--\eqref{prop3} and for the last equality we used \eqref{KruzDF} with the entropy $E(s)=|s\pm k|$ with sufficiently large $k$ depending on the support of $\varphi$.  Note that  such a chosen $E$ is not sufficiently regular, but it is a standard procedure to show with help of density arguments that such entropies may be used. Hence, we see that the distribution $m$ is locally (depending on $(t,x)$) supported (with respect to $\xi$) in a  ball. It is also evident from the definition, just by using integration by parts, that $m$  solves \eqref{E1kBweak} or in case that $U$ and $\bA$ are not sufficiently smooth it solves at least \eqref{E1kBweak2}. Our goal now is to show that $m$ is nonnegative and therefore is a measure. Since we know that $m$ is a distribution, to prove that it is a measure, it  is enough to show that $\langle m, \varphi\rangle \ge 0$ for all $\varphi$'s of the form $\varphi(t,x,\xi):=\psi_1(\xi)\psi_2(t,x)$, where $\psi_1\in \mathcal{D}(\mathbb{R})$ and $\psi_2\in \mathcal{D}(\mathbb{R}^{d+1})$ are nonnegative. Denoting also the convex function $E(s):=-\int_0^s \int_{t}^{\infty}\psi_1(y)\dy\dt$, we see that $E$ is smooth convex function and $E''=\psi_1$.
With this special choice, the identity \eqref{P1} reduces to
\begin{align}
\left.\begin{aligned}
\langle m, \psi_1\psi_2\rangle=&-\int_{\R^{d+2}_+}\int_{-\infty}^{\xi}U'(s)\chi(s,v(t,x))E''(\xi)\partial_t \psi_2(t,x)\ds\dxi\dx\dt\\
 &-\int_{\R^{d+2}_+} \int_{-\infty}^{\xi}\bA'(s) \chi(s,v(t,x))E''(\xi)\cdot \nabla_x\psi_2(t,x)\ds\dxi\dx\dt\\
  &+\int_{\R^{d+2}_+}\int_{-\infty}^{\xi}a(s)\partial_{s} \chi(s,v(t,x))E''(\xi)\psi_2(t,x)\ds\dxi\dx\dt\\
    &- \int_{\R^{d+1}}\int_{-\infty}^{\xi}U'(s)\chi(s,v_0(x))E''(\xi)\psi_2(0,x)\ds\dxi\dx\\
     &-\int_{\mathbb{R}^{d+2}_+} \int_{-\infty}^{\xi} a(0)\delta_0(s) E''(\xi)\psi_2(t,x) \ds\dxi \dx \dt.
\end{aligned}\right.
\label{P2}
\end{align}
Then using \eqref{int_p} with $g:=E'$ (note that $E'(\infty)=0$), we see that \eqref{P2} can be simplified in the following way
\begin{align}
\left.\begin{aligned}
\langle m, \psi_1\psi_2\rangle=&\int_{\R^{d+2}_+}U'(\xi)\chi(\xi,v(t,x))E'(\xi)\partial_t \psi_2(t,x)\; d\xi\; dx\; dt\\
 &+\int_{\R^{d+2}_+} \bA'(\xi) \chi(\xi,v(t,x))E'(\xi)\cdot \nabla_x\psi_2(t,x)\; d\xi\; dx\; dt\\
  &-\int_{\R^{d+2}_+}a(\xi)\partial_{\xi} \chi(\xi,v(t,x))E'(\xi)\psi_2(t,x)\; d\xi\; dx\; dt\\
    &+\int_{\R^{d+1}}U'(\xi)\chi(\xi,v_0(x))E'(\xi)\psi_2(0,x)\; d\xi\; dx\\
    & +\int_{\mathbb{R}^{d+2}_+} a(0)\delta_0(\xi)E'(\xi)\psi_2(t,x) \; d\xi \; dx \; dt.
\end{aligned}\right.
\label{P3}
\end{align}
Recalling the definition of the fluxes $Q_U$ and $Q_{\bA}$ and the properties of $\chi$, see \eqref{prop1}--\eqref{prop2},
we see that \eqref{P3} reduces to
\begin{align}
\left.\begin{aligned}
\langle m, \psi_1\psi_2\rangle=&\int_{\R^{d+1}_+}(Q_U(v(t,x))-Q_U(0))\partial_t \psi_2(t,x)\dx\dt\\
 &+\int_{\R^{d+1}_+} (Q_{\bA}(v(t,x))-Q_{\bA}(0))\cdot \nabla_x\psi_2(t,x)\dx\dt\\
  &+\int_{\R^{d+1}_+}a(v(t,x))E'(v(t,x))\psi_2(t,x)\dx\dt\\
    &+\int_{\R^{d}}(Q_U(v_0(x))-Q_U(0))\psi_2(0,x)\dx.
\end{aligned}\right.
\label{P4}
\end{align}
Finally, using the fact that $\psi_2$ has compact support and integrating by parts with respect to $x$ and $t$ we obtain
\begin{align}
\left.\begin{aligned}
&\langle m, \psi_1\psi_2\rangle=\int_{\R^{d+1}_+}Q_U(v(t,x))\partial_t \psi_2(t,x)\dx\dt+\int_{\R^{d}}Q_U(v_0(x))\psi_2(0,x)\dx\\
 &\quad +\int_{\R^{d+1}_+} Q_{\bA}(v(t,x))\cdot \nabla_x\psi_2(t,x)+a(v(t,x))E'(v(t,x))\psi_2(t,x)\dx\dt,
\end{aligned}\right.
\label{P5}
\end{align}
which is according to the assumption that $u$ is a weak entropy solution, or more precisely due to  \eqref{KruzDF}, nonnegative and therefore $m$ is a measure.

To prove also the opposite implication, i.e., that any kinetic solution is also a weak entropy solution, it is enough to show the validity of \eqref{KruzDF}. Hence, let $E$ be an arbitrary convex smooth function such that $E'' \in \mathcal{D}(\mathbb{R})$ and $\psi \in \mathcal{D}(\mathbb{R}^{d+1})$ be arbitrary nonnegative function. Consider also a nonnegative $\tau\in \mathcal{D}(-2,2)$ such that $\tau=1$ in $(-1,1)$ and define $\tau_n(s):=\tau(s/n)$. Finally, we set $\varphi(t,x,\xi):=E'(\xi)\psi(t,x)\tau_n(\xi)$ in \eqref{E1kBweak} to obtain (using also the definition of $Q_U$ and $Q_{\bA}$)
\begin{align*}
&\left\langle m(t,x,\xi),\psi(t,x)\partial_{\xi}(E'(\xi)\tau_n(\xi))\right\rangle\\
&=\int_{\mathbb{R}^{d+2}_+} Q'_U(\xi)\chi(\xi,v(t,x))\tau_n(\xi)\partial_t \psi(t,x) \dxi\dx \dt\\
&+\int_{\mathbb{R}^{d+2}_+} Q'_{\bA}(\xi)\chi(\xi,v(t,x))\tau_n(\xi)\cdot \nabla_x \psi(t,x)\dxi\dx \dt\\
&+\int_{\mathbb{R}^{d+1}_+} a(0)E'(0)\psi(t,x) \dx \dt+\int_{\mathbb{R}^{d+2}_+} a'(\xi)\tau_{n}(\xi)E'(\xi) \chi(\xi,v(t,x))\psi(t,x)  \dxi \dx \dt\\
  &+\int_{\mathbb{R}^{d+1}} Q'_U(\xi)\tau_n(\xi)\chi(\xi,v_0(x))\psi(0,x) \dxi\dx\\
  &+\int_{\mathbb{R}^{d+2}_+} a(\xi) \chi(\xi,v(t,x))\psi(t,x)\partial_{\xi}(E'(\xi)\tau_n(\xi))  \dxi \dx\dt.
\end{align*}
Therefore, defining the auxiliary fluxes
$$
Q^n_U(\xi):=\int_0^{\xi} Q'_U(s)\tau_n(s)\ds, \quad Q_{\bA}^n(\xi):=\int_0^{\xi}Q'_{\bA}(s)\tau_n(s)\ds
$$
and using the identities  \eqref{prop1}--\eqref{prop2} and integration by parts, we observe that
\begin{align}
\left.\begin{aligned}
&\langle m(t,x,\xi),\psi(t,x) E'(\xi)\tau'_n(\xi)\rangle+\langle m(t,x,\xi),\psi(t,x)E''(\xi)\tau_n(\xi)\rangle\\
&= \int_{\mathbb{R}^{d+1}_+} (Q^n_U(v(t,x))-Q^n_U(0))\partial_t \psi(t,x) \dx \dt\\
&\quad + \int_{\mathbb{R}^{d+1}_+}(Q^n_{\bA}(v(t,x))-Q^n_{\bA}(0))\cdot \nabla_x \psi(t,x)\dx \dt\\
  &\quad +\int_{\mathbb{R}^{d}} (Q^n_U(v_0(t,x))-Q^n_U (0))\psi(0,x) \dx\\
  &\quad +\int_{\mathbb{R}^{d+1}_+} a(v(t,x)) \psi(t,x)E'(v(t,x))\tau_n(v(t,x))   \dx\dt.
\end{aligned}\right.
\label{P10}
\end{align}
Finally, we see that the second term on the left hand side is nonnegative since $m, \psi, \tau_n$ are nonnegative and $E$ is convex. Moreover, since we know that $m$ is locally (with respect to $(t,x)$) compactly supported measure (with respect to $\xi$), we can find a sufficiently large $n$ depending on the support of $\psi$ such that the first term on the left hand side is identically zero (note here that  $\tau'_n$ is supported outside of the ball of radius $n$). Therefore, for all compactly supported $\psi$ and all sufficiently large $n$, the left hand side of \eqref{P10} is nonnegative and using also integration by parts, we find that for such $n$'s
\begin{align}
\left.\begin{aligned}
0&\le  \int_{\mathbb{R}^{d+1}_+} Q^n_U(v(t,x))\partial_t \psi(t,x) +Q^n_{\bA}(v(t,x))\cdot \nabla_x \psi(t,x)\dx \dt\\
  &\quad +\int_{\mathbb{R}^{d}} Q^n_U(v_0(t,x))\psi(0,x) \dx\\
  &\quad +\int_{\mathbb{R}^{d+1}_+} a(v(t,x)) \psi(t,x)E'(v(t,x))\tau_n(v(t,x))   \dx\dt.
\end{aligned}\right.
\label{P11}
\end{align}
Consequently, letting $n\to \infty$ in the above inequality, using that $\tau_n(v(t,x))\to 1$, $Q^n_U(v(t,x)) \to Q_U(v(t,x))$ and $Q_{\bA}^n(v(t,x))\to Q_{\bA}(v(t,x))$, we see that  \eqref{KruzDF} follows.

\section{Further auxiliary tools} \label{S3}
This section is devoted to introduction of two tools that will be used later for the proof of Theorem~\ref{TT2}. The first one is a stability of
\emph{entropy measure valued solution}, which will be used for the uniqueness (and also existence) part of Theorem~\ref{TT2}. The second one is a proper \emph{kinetic approximation} of \eqref{E1g}, which will be used for the existence part of Theorem~\ref{TT2}

\subsection{Entropy measure valued solution and stability inequalities}
We start with the notion of the entropy measure valued solution. Its importance consists in the fact that one can prove the existence of such a solution quite directly just by establishing the a~priori estimates.
\begin{Def}[Entropy measure valued solution]\label{measure}
Let $u_0 \in L^{\infty}_{loc}(\mathbb{R}^d)$. Assume that graphs $\mA_{\bF}$ and $\mA_G$ admit a continuous parametrization \eqref{param}. We say that a Young measure $\nu\in L^{\infty}_{weak}((0,T)\times K; \mathcal{P}_0(\mathbb{R}))$, for any compact $K\subset\R^d$ is an entropy measure valued solution to  \eqref{E1g} and \eqref{param} with the initial data $u_0$ if and only if  for  all smooth convex functions $E:\mathbb{R}\to \mathbb{R}$ such that $E''\in \mathcal{D}(\mathbb{R})$, and for all nonnegative $\varphi \in \mathcal{D}((-\infty,T)\times \mathbb{R}^d)$ there holds
\begin{equation}
\begin{aligned}
&\int_{\mathbb{R}^{d+1}_+} \langle Q_U(\lambda),\nu_{(t,x)}(\lambda)\rangle\partial_t \varphi(t,x) + \langle Q_{\bA}(\lambda),\nu_{(t,x)}(\lambda)\rangle\cdot \nabla_x \varphi(t,x)\dx \dt \\
&\ge -\int_{\mathbb{R}^{d+1}_+} \langle a(\lambda)E'(\lambda),\nu_{(t,x)}(\lambda)\rangle \varphi(t,x)\dx \dt -\int_{\mathbb{R}}Q_U(v_0(x))\varphi(0,x)  \dx, \label{Kruzmeasure}
\end{aligned}
\end{equation}
where  fluxes $Q_U$ and $Q_{\bA}$ are given by \eqref{DFflux} and $v_0$ satisfies $U(v_0)=u_0$ almost everywhere in $\mathbb{R}^d$.
\end{Def}
On this level of ``solution" we  want to pass to the entropy weak solution and furthermore  we also want to show the uniqueness of a solution and its independence of the graph parametrization. As an essential tool used in many cases, we show the following  result on stability of solutions.
\begin{Theorem}[Stability inequalities]\label{KatoT}
Let $\mA_{\bF}$ admit a continuous parametrization $(U,\bA)$. Let two graphs $\mA_{G^1}$ and $\mA_{G^2}$ admit the continuous parametrization $(U,a^1)$ and $(U,a^2)$ respectively. Moreover, let $a^1\le a^2$ everywhere and  $u_0^i \in L^{\infty}_{loc}(\mathbb{R}^d)$. Then for any entropy measure valued  solution $\nu^{i}$ corresponding to $(U,\bA,a^i,v_0^i)$ with $i=1,2$ and arbitrary nonnegative $\varphi\in \mathcal{D}((-\infty,T)\times \R^d)$ there holds
\begin{equation}
\begin{aligned}
&-\int_0^T\int_{\R^d}\langle(U(\lambda)-U(k))_+,\nu^{1}_{(t,x)}(\lambda)\otimes\nu^{2}_{(t,x)}(k)\rangle  \partial_t \varphi(t,x) \dx \dt \\
&\quad -\int_0^T\int_{\R^d}\langle (\bA(\lambda)-\bA(k))\bbbone_{\{\lambda \ge k\}},\nu^{1}_{(t,x)}(\lambda)\otimes\nu^{2}_{(t,x)}(k)\rangle \cdot \nabla_x \varphi(t,x)  \dx \dt\\
& \le \int_0^T\int_{\R^d}\langle ((a^1(\lambda)-a^{2}(k))\bbbone_{\{\lambda \ge k\}} ,\nu^{1}_{(t,x)}(\lambda)\otimes \nu^{2}_{(t,x)}(k)\rangle \varphi(t,x)  \dx \dt\\
&\quad +\int_{\R^d}(U(v_0^1(x))-U(v_0^2(x)))_+\varphi(0,x)  \dx.\label{resultin1}
\end{aligned}
\end{equation}
Moreover, assume that the graphs $\mA_{\bF}$ and $\mA_{G}$ admit two equivalent continuous parametrizations $(U^1,\bA^1,a^1)$ and $(U^2,\bA^2,a^2)$ fulfilling \eqref{eqpa}. Let $\nu^1$ and $\nu^2$ be two  entropy measure valued solutions corresponding to the initial data $v_0^1$ and $v_0^2$. Then for all $\varphi \in \mathcal{D}((-\infty,T)\times \R^d)$ there holds
\begin{equation}
\begin{aligned}
&-\int_0^T \int_{\R^d} \langle(U^1(\lambda)-U^2(k))_+,\nu^{1}_{(t,x)}(\lambda)\otimes\nu^{2}_{(t,x)}(k)\rangle \partial_t\varphi(t,x) \dx \dt\\
&\quad- \int_0^T \int_{\R^d}  \langle (\bA^1(\lambda)-\bA^2(k))\bbbone_{\{\lambda \ge \tilde{U}(k)\}},\nu^{1}_{(t,x)}(\lambda)\otimes\nu^{2}_{(t,x)}(k)\rangle \cdot \nabla_x \varphi (t,x)\dx \dt\\
& \le \int_0^T \int_{\R^d} \langle (a^{1}(\lambda)-a^2(k))\bbbone_{\{\lambda \ge \tilde{U}(k)\}},\nu^{1}_{(t,x)}(\lambda)\otimes \nu^2_{(t,x)}(k)\rangle \varphi(t,x) \dx\dt\\
&\quad + \int_{\R^d} (U^1(v_0^1(x))-U^2(v_0^2(x)))_+\varphi(0,x) \dx.
\label{resultin2}
\end{aligned}
\end{equation}
In addition, for arbitrary entropy measure valued solution, the initial data are attained in the following sense
\begin{equation}
\label{indatameasure}
\lim_{\tau \to 0_+} \frac{1}{\tau} \int_0^{\tau} \int_K \langle |U(\lambda)-U(v_0(x))|,\nu_{(t,x)}(\lambda)\rangle \dx \dt=0,
\end{equation}
where $K\subset \mathbb{R}^d$ is an arbitrary compact set.
\end{Theorem}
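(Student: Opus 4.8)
The plan is to derive all three assertions from the single defining inequality \eqref{Kruzmeasure} of Definition~\ref{measure} by the Kru\v{z}kov doubling-of-variables technique, in the measure-valued form of DiPerna and Szepessy. Since \eqref{Kruzmeasure} only admits smooth entropies with $E''\in\mathcal{D}(\mathbb{R})$, the first preparatory step is to enlarge this class: approximating the Dirac mass $\delta_k$ by mollifiers $\rho_\delta(\cdot-k)$ yields smooth convex entropies $E_\delta$ with $E_\delta''=\rho_\delta(\cdot-k)$ whose fluxes converge, as $\delta\to0$, to those produced by the nonsmooth entropy $\lambda\mapsto(\lambda-k)_+$. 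For this limiting choice one reads off from \eqref{DFflux}, using that $U$ is nondecreasing, that $Q_U(\lambda)=(U(\lambda)-U(k))_+$, that $Q_{\bA}(\lambda)=(\bA(\lambda)-\bA(k))\bbbone_{\{\lambda\ge k\}}$ and that the source weight is $E'(\lambda)=\bbbone_{\{\lambda\ge k\}}$; these are exactly the integrands in \eqref{resultin1}. Thus for each fixed parameter $k$ the inequality holds for $\nu^1$ with these fluxes, and symmetrically, viewing $k\mapsto(\lambda-k)_+$ as a convex function of $k$, it holds for $\nu^2$ with the same fluxes (now parametrized by $\lambda$) but source weight $-\bbbone_{\{\lambda\ge k\}}$.

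First I would establish the initial-trace identity \eqref{indatameasure}, since it is needed to treat the boundary terms of the doubling. Applying the enlarged inequality to $\nu$ with $E(\lambda)=(\lambda-k)_+$ and with $E(\lambda)=(k-\lambda)_+$, localizing the test function around a Lebesgue point $x_0$ of $v_0$ and letting it concentrate at $t=0$, one controls from above the averaged time trace of $t\mapsto\int_K\langle(U(\lambda)-U(k))_+,\nu_{(t,x)}\rangle\dx$ by $\int_K(U(v_0)-U(k))_+\dx$, and analogously for the reversed entropy. Choosing $k=v_0(x_0)$ and summing the two one-sided parts reconstructs $\langle|U(\lambda)-U(v_0)|,\nu\rangle$, a nonnegative quantity whose averaged trace is then forced to vanish; a covering argument over Lebesgue points together with the local boundedness (hence uniform integrability) of the generating sequence upgrades this to \eqref{indatameasure} on all of $K$.

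For \eqref{resultin1} I would double the variables: write the $\nu^1$-inequality in $(t,x)$ with parameter $k$ and the $\nu^2$-inequality in $(s,y)$ with parameter $\lambda$, test both with $\Phi(t,x,s,y)=\varphi(\tfrac{t+s}{2},\tfrac{x+y}{2})\rho_\varepsilon(t-s)\rho_\varepsilon(x-y)$, integrate the former against $\nu^2_{(s,y)}(k)$ and the latter against $\nu^1_{(t,x)}(\lambda)$, and add. The mollifier derivatives cancel in $\partial_t+\partial_s$ and in $\nabla_x+\nabla_y$, so the bulk terms assemble into the tensor-product integrand of \eqref{resultin1}, while the two source contributions combine to $(a^1(\lambda)-a^2(k))\bbbone_{\{\lambda\ge k\}}$; the order $a^1\le a^2$ is what makes this term usable downstream and is compatible with the sign in \eqref{resultin1}. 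Letting $\varepsilon\to0$ collapses $\nu^1_{(t,x)}\otimes\nu^2_{(s,y)}$ onto the diagonal measure $\nu^1_{(t,x)}\otimes\nu^2_{(t,x)}$, and the initial terms, handled through \eqref{indatameasure} applied to both $\nu^1$ and $\nu^2$, produce the single boundary integral $\int(U(v_0^1)-U(v_0^2))_+\varphi(0,x)\dx$. Finally, \eqref{resultin2} reduces to \eqref{resultin1}: since the two parametrizations are equivalent through the increasing $\mathcal{C}^1$-diffeomorphism $\tilde U$ of \eqref{eqpa}, the push-forward $\tilde{U}_{\#}\nu^2$ is, via the chain-rule correspondence $\bar E'=E'\circ\tilde U^{-1}$ between convex entropies, an entropy measure valued solution for $(U^1,\bA^1,a^1)$ with initial datum $\tilde U(v_0^2)$; applying \eqref{resultin1} to $(\nu^1,\tilde{U}_{\#}\nu^2)$ with the common source $a^1$ and substituting $k'=\tilde U(k)$ turns $\bbbone_{\{\lambda\ge k'\}}$, $\bA^1(k')$, $a^1(k')$ into $\bbbone_{\{\lambda\ge\tilde U(k)\}}$, $\bA^2(k)$, $a^2(k)$, which is exactly \eqref{resultin2}.

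The hard part will be the $\varepsilon\to0$ limit in the doubling within the measure-valued framework: one must justify, using only the weak-$*$ measurability of $(t,x)\mapsto\nu^i_{(t,x)}$ and the local boundedness of the solutions, that the doubly indexed integrand converges to its diagonal value, while simultaneously controlling the nonsmooth entropy approximation uniformly in the parameter $k$ (respectively $\lambda$) so that the $\delta\to0$ and $\varepsilon\to0$ limits may be interchanged. The most delicate bookkeeping is the coupled treatment of the initial-time boundary terms, where \eqref{indatameasure} must be invoked on a time-slice that is itself being mollified, so that the two boundary contributions collapse to the single positive-part datum rather than being double-counted.
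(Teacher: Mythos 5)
Your plan is correct in substance, but it departs from the paper's proof in all three components, most interestingly in the second stability inequality, so a comparison is worthwhile. For \eqref{indatameasure} the paper does not run a Kru\v{z}kov--Lebesgue-point argument: it first extracts an \emph{identity} for $\langle U(\lambda),\nu\rangle$ (taking $E(\lambda)=|\lambda\pm R|$ with $R$ beyond the local support of $\nu$), giving weak attainment of $U(v_0)$, then uses the entropy $E(\lambda)=2\int_0^\lambda U(s)\,\mathrm{d}s$ --- convex precisely because $U$ is nondecreasing, with flux $Q_U=U^2$ --- to get a $\limsup$ bound on the second moment, and concludes strong $L^1$ attainment by the weak-convergence-plus-norms trick via H\"{o}lder and Jensen; your route is a standard alternative but buys you the covering bookkeeping the paper avoids. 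For \eqref{resultin1} the paper does not double the base variables in your $(t,x;s,y)$ sense: it mollifies the Young measures themselves, obtains pointwise differential inequalities for $\nu^{i,\varepsilon}$, pairs each against the other smoothed measure \emph{at the same point} $(t,x)$ and sums by the product rule, so the diagonal collapse you flag as the hard step is replaced by a.e.\ convergence of mollifications and the limits $n\to\infty$ and $\varepsilon\to 0$ are taken sequentially rather than interchanged; note also a subtlety your sketch glosses over: the diagonal $\{\lambda=k\}$ may carry atoms of $\nu^1\otimes\nu^2$, and the paper uses $a^1\le a^2$ to replace $a^1(\lambda)$ by $a^2(\lambda)$ so that continuity of a \emph{single} function annihilates the diagonal contribution when the approximate indicators $2n(\lambda-k)\bbbone_{\{\lambda\in(k,k+\frac{1}{2n})\}}+\bbbone_{\{\lambda\ge k+\frac{1}{2n}\}}$ converge; in your version you must choose one-sided entropy approximations so that the limiting indicator is $\bbbone_{\{\lambda\ge k\}}$ while the source keeps a sign on the diagonal. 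For \eqref{resultin2} your reduction is genuinely different and arguably cleaner than the paper's: the paper redoes the whole doubling with parameters $\beta_1=\tilde U(k)$, $\beta_2=(\tilde U)^{-1}(\lambda)$ and \emph{matched} approximation rates $n(k)=\bigl(2(\tilde U(k+\tfrac{1}{2m})-\tilde U(k))\bigr)^{-1}$, invoking $\tilde U\in\mathcal{C}^1$ to kill the error term $Z_2^{n,m}$, whereas your push-forward $\mu:=\tilde U_{\#}\nu^2$ transports the entropy inequality exactly, because $E':=\bar E'\circ\tilde U$ sends admissible entropies to convex functions (as $\tilde U$ is increasing) whose second derivative is merely continuous and compactly supported --- admissible by the density argument the paper itself records --- and the identities $U^1\circ\tilde U=U^2$, $\bA^1\circ\tilde U=\bA^2$, $a^1\circ\tilde U=a^2$ convert the fluxes and the initial term $Q_{U^2}(v_0^2)=\bar Q_{U^1}(\tilde U(v_0^2))$; you should spell out this verification (including that $\tilde U$ is a proper homeomorphism of $\R$, so the compact supports and weak-$*$ measurability of the Young measure are preserved), since it replaces the most technical page of the paper's proof and in fact requires less than $\tilde U\in\mathcal{C}^1$.
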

\begin{proof}
We start the proof with showing the essential property  \eqref{indatameasure}, that will be also used later.  We first focus on weak attainment of the initial data. Since the measure is locally bounded, we know that for all  bounded balls $B\subset \mathbb{R}^d$ there exists $R$ such that $\supp \nu_{(t,x)}\subset [-R,R]$ whenever $x \in B$. Hence, defining\footnote{Although such an entropy is not a smooth function, the use of this entropy can be easily justified by a proper approximative scheme, see also the next subsection.}  the entropy $E(\lambda):=|\lambda \pm R|$, we see that for all $x\in B$, there holds $E(\lambda)=\pm(\lambda \pm R)$ for all $\lambda$ belonging to the support of $\nu_{(t,x)}$. Consequently, with this choice of the entropies, we can deduce from \eqref{Kruzmeasure} that for an arbitrary $\varphi \in \mathcal{D}((-\infty,T)\times B)$
\begin{equation*}
\begin{aligned}
&\int_{\mathbb{R}^{d+1}_+} \langle U(\lambda),\nu_{(t,x)}(\lambda)\rangle\partial_t \varphi(t,x) + \langle \bA(\lambda),\nu_{(t,x)}(\lambda)\rangle\cdot \nabla_x \varphi(t,x)\dx \dt \\
&= -\int_{\mathbb{R}^{d+1}_+} \langle a(\lambda),\nu_{(t,x)}(\lambda)\rangle \varphi(t,x)\dx \dt -\int_{\mathbb{R}}U(v_0(x))\varphi(0,x)  \dx.
\end{aligned}
\end{equation*}
Due to the weak star density, we see that the above identity holds even for Lipschitz functions $\varphi$. Thus, for an arbitrary $\psi \in \mathcal{D}(B)$ we can set $\varphi(t,x):=\psi(x)(1-\frac{t}{\tau})_+$ to obtain
\begin{equation*}
\begin{aligned}
&\lim_{\tau \to 0_+}\left|\frac{1}{\tau} \int_0^{\tau}\int_{B} \langle U(\lambda),\nu_{(t,x)}(\lambda)\rangle \psi(x)\dx \dt - \int_{B}U(v_0)\psi(x)\dx \right| =0.
\end{aligned}
\end{equation*}
Consequently, defining $u(t,x):=\langle U(\lambda),\nu_{(t,x)}(\lambda)\rangle$, we see that (note here that for almost all times $t\in (0,T)$, the function $u$ is uniformly bounded in $B$)
\begin{equation}\label{weak_att}
\frac{1}{\tau}\int_0^{\tau} u(t) \dt \rightharpoonup U(v_0) \qquad \textrm{weakly in } L^2(B),
\end{equation}
where we considered the Bochner integral. On the other hand, since $U$ is nondecreasing, the function $E(\lambda):=2\int_0^{\lambda} U(s)\; \ds$ is a possible entropy in  \eqref{Kruzmeasure} and $Q_U(\lambda):=U^2(\lambda)$ is the corresponding flux. Therefore, using this entropy, and also the boudendess of the Young measure in $B$, we can repeat step by step the previous procedure and deduce that for all nonnegative functions $\psi\in  \mathcal{D}(\mathbb{R}^d)$
$$
\limsup_{\tau \to 0_+} \frac{1}{\tau}\int_0^{\tau}\int_{\mathbb{R}^d}\langle U^2(\lambda),\nu_{(t,x)}(\lambda)\rangle \psi(x)\dx \dt\le \int_{\mathbb{R}^d}U^2(v_0(x))\psi(x)\dx,
$$
which by the density result and also due to the local  boundedness of the Young  measure also implies
\begin{equation}\label{limsup_att}
\limsup_{\tau \to 0_+} \frac{1}{\tau}\int_0^{\tau}\int_{B}\langle U^2(\lambda),\nu_{(t,x)}(\lambda)\rangle \dx \dt\le \int_{B}U^2(v_0(x))\dx.
\end{equation}
Using finally the H\"{o}lder and the Jensen inequality, we observe that
\begin{equation*}
\begin{split}
\lim_{\tau \to 0_+} &\frac{1}{\tau} \int_0^{\tau} \int_K \langle |U(\lambda)-U(v_0(x))|,\nu_{(t,x)}(\lambda)\rangle \dx \dt\\
&\le \lim_{\tau \to 0_+} \frac{1}{\tau} \int_0^{\tau} \int_K \sqrt{\langle |U(\lambda)-U(v_0(x))|^2,\nu_{(t,x)}(\lambda)\rangle} \dx \dt\\
&\le |K|^{\frac12} \sqrt{\lim_{\tau \to 0_+}  \frac{1}{\tau} \int_0^{\tau} \int_K \langle |U(\lambda)-U(v_0(x))|^2,\nu_{(t,x)}(\lambda)\rangle \dx \dt}\\
&= |K|^{\frac12} \left(\lim_{\tau \to 0_+}  \frac{1}{\tau} \int_0^{\tau} \int_K \langle U^2(\lambda),\nu_{(t,x)}(\lambda)\rangle+ U^2(v_0(x)) \dx \dt \right.\\
&\qquad \; \qquad -2\int_K U(v_0(x)) \left( \frac{1}{\tau}\int_0^{\tau} u(t,x)\dt \right)\dx \Big)^{\frac12}\\
&\overset{\eqref{weak_att},\eqref{limsup_att}}\le |K|^{\frac12} \left(\int_K 2U^2(v_0(x)) \dx - \int_K 2 U^2(v_0(x))\dx \right)^{\frac12}=0,
\end{split}
\end{equation*}
which finishes the proof of  \eqref{indatameasure}.

Next, we focus on the stability  inequality.
For the convex  function $E^+(\lambda):=\max(\lambda,0)$ we consider its approximation $E^+_n\in \mathcal{C}^1(\mathbb{R})$  given as
$$
E^+_n(\lambda):=\left\{ \begin{aligned}
&0 &&\textrm{for } \lambda \le 0,\\
&n\lambda^2 &&\textrm{for } \lambda \in \left[0,\frac{1}{2n}\right],\\
&\lambda-\frac{1}{4n} &&\textrm{for } \lambda > \frac{1}{2n}.
\end{aligned}\right.
$$
Similarly, we also introduce $E^{-}(\lambda):=E^{+}(-\lambda)$ and $E_n^{-}(\lambda):=E^{+}_n(-\lambda)$ and finally we define $E:=E^+ + E^{-}$ and $E_n:= E_n^+ + E_n^{-}$. Note that all these functions are convex and continuous and furthermore, the mollified functions are of the class $\mathcal{C}^{1}$. Therefore, these mollified functions are   proper entropies in \eqref{Kruzmeasure} (although \eqref{Kruzmeasure} is defined for smooth entropies, it follows from the density that it holds  even  for entropies only with a continuous derivative).

First, we prove the inequality \eqref{resultin1}. Let us consider $(U, \bA,a^1,v^1_0)$, $(U, \bA, a^2, v_0^2)$  and corresponding  entropy  measure valued solutions $\nu^1$ and $\nu^2$ to \eqref{Kruzmeasure}. For any Young measure $\nu^{i}$, there exists $\nu^{i,\varepsilon}\in \mathcal{C}^{\infty}_{loc}((\varepsilon,T-\varepsilon)\times \R^d; \mathcal{P}_0(\R))$
such that
$$
\langle g(\lambda),\nu^{i,\varepsilon}_{(t,x)}\rangle= \int_{\R^{d+1}}\langle g(\lambda), \nu^{i}_{(s,y)} \rangle \eta_{\varepsilon}(t-s,x-y)\dy \ds
$$
for any $g\in \mathcal{C}(\R)$ and $(t,x)\in (\varepsilon,T-\varepsilon)\times \R$, where $\eta_{\varepsilon}$ is the standard convolution kernel of radius $\varepsilon$.

Thus, considering for arbitrary $\tau\in (\varepsilon, T-\varepsilon)$ and $y\in \mathbb{R}^d$, the test function $\varphi(t,x):=\eta_{\varepsilon}(\tau-t,y-x)$ in \eqref{Kruzmeasure}, we obtain for all admissible entropies and fluxes the following inequality
\begin{equation}
\begin{aligned}
&\partial_{\tau}\langle Q_U(\lambda),\nu^{i,\varepsilon}_{(\tau,y)}(\lambda)\rangle + \diver_y \langle Q_{\bA}(\lambda),\nu^{i,\varepsilon}_{(\tau,y)}(\lambda)\rangle\le \langle a^{i}(\lambda)E'(\lambda),\nu^{i,\varepsilon}_{(\tau,y)}(\lambda)\rangle  \label{Kruzmeasurepoi}
\end{aligned}
\end{equation}
valid everywhere in $(\varepsilon,T-\varepsilon)\times \mathbb{R}^d$.

Next, for arbitrary $k\in \R$, we consider the the following entropies $E(\lambda):=E^+_n (\lambda -k)$ for which we  find the corresponding fluxes in the following way
$$
\begin{aligned}
Q^n_U(\lambda)&=\int_{-\infty}^\lambda U'(s)(E^+_n)'(s-k)\ds=(U(\lambda)-U(k))_+ + P_1^n(\lambda,k),\\
Q^n_{\bA}(\lambda)&=\int_{-\infty}^\lambda \bA'(s)(E^+_n)'(s-k)\ds=(\bA(\lambda)-\bA(k))\bbbone_{\lambda \ge k} + P_2^n(\lambda,k),
\end{aligned}
$$
where $P_1^n$ and $P_2^n$ converge locally (in $\mathbb{R}\times \mathbb{R}$) uniformly to zero as $n\to \infty$. Therefore, using these entropies in \eqref{Kruzmeasurepoi} (we just replace $(\tau,y)$ by $(t,x)$), we obtain for $i=1$
\begin{equation}
\begin{aligned}
&\partial_t\langle(U(\lambda)-U(k))_+,\nu^{1,\varepsilon}_{(t,x)}(\lambda)\rangle + \diver_x \langle (\bA(\lambda)-\bA(k))\bbbone_{\{\lambda \ge k\}},\nu^{1,\varepsilon}_{(t,x)}(\lambda)\rangle\\
&\quad \le \langle a^{1}(\lambda)(2n(\lambda-k)\bbbone_{\{\lambda \in (k,k+\frac{1}{2n})\}}+\bbbone_{\{\lambda \ge k+\frac{1}{2n}\}} ,\nu^{1,\varepsilon}_{(t,x)}(\lambda)\rangle  + P^{n,\varepsilon}(t,x,k),\label{Kruzmeasurepoi2}
\end{aligned}
\end{equation}
where $P^{n,\varepsilon}(t,x,k)\to 0$ in $\mathcal{C}_{loc}((\varepsilon, T-\varepsilon)\times \R^{d+1})$. In a very similar way, we use the entropy $E(k):=E^{-}_n(k-\lambda)$ and conclude for $i=2$ that
\begin{equation}
\begin{aligned}
&\partial_t\langle(U(\lambda)-U(k))_+,\nu^{2,\varepsilon}_{(t,x)}(k)\rangle + \diver_x \langle (\bA(\lambda)-\bA(k))\bbbone_{\{\lambda \ge k\}},\nu^{2,\varepsilon}_{(t,x)}(k)\rangle\\
&\; \le -\langle a^{2}(k)(2n(\lambda-k)\bbbone_{\{\lambda \in (k,k+\frac{1}{2n})\}}+\bbbone_{\{\lambda \ge k+\frac{1}{2n}\}} ,\nu^{2,\varepsilon}_{(t,x)}(k)\rangle  + P^{n,\varepsilon}(t,x,\lambda).\label{Kruzmeasurepoi3}
\end{aligned}
\end{equation}
Finally, we apply  $\nu^{2,\varepsilon}_{(t,x)}(k)$ to \eqref{Kruzmeasurepoi2} and  $\nu^{1,\varepsilon}_{(t,x)}(\lambda)$ to \eqref{Kruzmeasurepoi3}. Such a procedure is possible since both regularized measures are nonnegative and smooth with respect to $(t,x)$. Then we sum the resulting  inequalities and using the chain rule (which is now possible due to the smoothness of all quantities with respect to $(t,x)$), we obtain that for all $t\in (\varepsilon,T-\varepsilon)$ and all $x\in \mathbb{R}^d$ there holds
\begin{equation*}
\begin{aligned}
&\partial_t\langle(U(\lambda)-U(k))_+,\nu^{1,\varepsilon}_{(t,x)}(\lambda)\otimes\nu^{2,\varepsilon}_{(t,x)}(k)\rangle \\
&\quad+ \diver_x \langle (\bA(\lambda)-\bA(k))\bbbone_{\{\lambda \ge k\}},\nu^{1,\varepsilon}_{(t,x)}(\lambda)\otimes\nu^{2,\varepsilon}_{(t,x)}(k)\rangle\\
& \le \left\langle (a^1(\lambda)-a^{2}(k))(2n(\lambda-k)\bbbone_{\{\lambda \in (k,k+\frac{1}{2n})\}}+\bbbone_{\{\lambda \ge k+\frac{1}{2n}\}}, \right. \\
&\qquad \left. \nu^{1,\varepsilon}_{(t,x)}(\lambda)\otimes \nu^{2,\varepsilon}_{(t,x)}(k)\right\rangle + Y^{n,\varepsilon}(t,x),\label{Kruzmeasurepoi4}
\end{aligned}
\end{equation*}
where $Y^{n,\varepsilon} \to 0$ in $\mathcal{C}_{loc} ((\varepsilon, T-\varepsilon)\times \mathbb{R}^d)$  as $n\to \infty$. At this place, we finally use the key assumption, i.e., the fact  that $a^1\le a^2$, and we see that  the right hand side can be simplified, which leads to the following inequality
\begin{equation}
\begin{aligned}
&\partial_t\langle(U(\lambda)-U(k))_+,\nu^{1,\varepsilon}_{(t,x)}(\lambda)\otimes\nu^{2,\varepsilon}_{(t,x)}(k)\rangle \\
&\quad+ \diver_x \langle (\bA(\lambda)-\bA(k))\bbbone_{\{\lambda \ge k\}},\nu^{1,\varepsilon}_{(t,x)}(\lambda)\otimes\nu^{2,\varepsilon}_{(t,x)}(k)\rangle\\
& \le \left\langle (a^2(\lambda)-a^{2}(k))(2n(\lambda-k)\bbbone_{\{\lambda \in (k,k+\frac{1}{2n})\}}+\bbbone_{\{\lambda \ge k+\frac{1}{2n}\}}, \right.\\
&\qquad \left. \nu^{1,\varepsilon}_{(t,x)}(\lambda)\otimes \nu^{2,\varepsilon}_{(t,x)}(k)\right\rangle + Y^{n,\varepsilon}(t,x).\label{Kruzmeasurepoi5}
\end{aligned}
\end{equation}
Since $a^2$ is continuous,  we can easily let $n\to \infty$ in \eqref{Kruzmeasurepoi5} to deduce the final inequality
\begin{equation}
\begin{aligned}
&\partial_t\langle(U(\lambda)-U(k))_+,\nu^{1,\varepsilon}_{(t,x)}(\lambda)\otimes\nu^{2,\varepsilon}_{(t,x)}(k)\rangle \\
&\quad+ \diver_x \langle (\bA(\lambda)-\bA(k))\bbbone_{\{\lambda \ge k\}},\nu^{1,\varepsilon}_{(t,x)}(\lambda)\otimes\nu^{2,\varepsilon}_{(t,x)}(k)\rangle\\
& \le \langle ((a^2(\lambda)-a^{2}(k))\bbbone_{\{\lambda \ge k\}} ,\nu^{1,\varepsilon}_{(t,x)}(\lambda)\otimes \nu^{2,\varepsilon}_{(t,x)}(k)\rangle.\label{Kruzmeasurepoif}
\end{aligned}
\end{equation}
Then for arbitrary nonnegative  $\varphi\in \mathcal{D}((-\infty,T)\times \R^{d+1})$ and nonnegative $\tau \in \mathcal{D}(0,\infty)$, we can find sufficiently small $\varepsilon_0>0$ such that for all $\varepsilon \in (0,\varepsilon_0)$ the support of $\varphi \tau$ belongs to $(\varepsilon, T-\varepsilon)\times K$, where $K\subset \mathbb{R}^d$ is a compact set. Consequently,  $\varphi \tau$ is an appropriate  test function in \eqref{Kruzmeasurepoif} thus  integrating  by parts and   letting $\varepsilon \to 0_+$ gives
\begin{equation}
\begin{aligned}
&-\int_0^T\int_{\R^d}\langle(U(\lambda)-U(k))_+,\nu^{1}_{(t,x)}(\lambda)\otimes\nu^{2}_{(t,x)}(k)\rangle \tau(t) \partial_t \varphi(t,x) \dx \dt \\
&\; -\int_0^T\int_{\R^d}\langle (\bA(\lambda)-\bA(k))\bbbone_{\{\lambda \ge k\}},\nu^{1}_{(t,x)}(\lambda)\otimes\nu^{2}_{(t,x)}(k)\rangle \cdot \nabla_x \varphi(t,x) \tau(t) \dx \dt\\
& \le \int_0^T\int_{\R^d}\langle ((a^2(\lambda)-a^{2}(k))\bbbone_{\{\lambda \ge k\}} ,\nu^{1}_{(t,x)}(\lambda)\otimes \nu^{2}_{(t,x)}(k)\rangle \varphi(t,x) \tau (t)\dx \dt\\
&\quad +\int_0^T\int_{\R^d}\langle(U(\lambda)-U(k))_+,\nu^{1}_{(t,x)}(\lambda)\otimes\nu^{2}_{(t,x)}(k)\rangle  \varphi(t,x) \tau'(t) \dx \dt .\label{find}
\end{aligned}
\end{equation}
Next, due to the weak star density, we see that the above inequality holds also for Lipschitz $\tau$ fulfilling $\tau(0)=0$. Thus, setting  $\tau(t):=\min(nt,1)$ in \eqref{find} and  letting $n\to \infty$ we find that
\begin{equation}
\begin{aligned}
&-\int_0^T\int_{\R^d}\langle(U(\lambda)-U(k))_+,\nu^{1}_{(t,x)}(\lambda)\otimes\nu^{2}_{(t,x)}(k)\rangle  \partial_t \varphi(t,x) \dx \dt \\
&\quad -\int_0^T\int_{\R^d}\langle (\bA(\lambda)-\bA(k))\bbbone_{\{\lambda \ge k\}},\nu^{1}_{(t,x)}(\lambda)\otimes\nu^{2}_{(t,x)}(k)\rangle \cdot \nabla_x \varphi (t,x) \dx \dt\\
& \le \int_0^T\int_{\R^d}\langle ((a^2(\lambda)-a^{2}(k))\bbbone_{\{\lambda \ge k\}} ,\nu^{1}_{(t,x)}(\lambda)\otimes \nu^{2}_{(t,x)}(k)\rangle \varphi(t,x) \dx \dt\\
&\quad +\limsup_{n\to \infty}n\int_0^{\frac{1}{n}}\int_{\R^d}\langle(U(\lambda)-U(k))_+,\nu^{1}_{(t,x)}(\lambda)\otimes\nu^{2}_{(t,x)}(k)\rangle  \varphi(t,x) \dx \dt. \label{find2}
\end{aligned}
\end{equation}
Finally, to estimate the limes superior in the last expression, we use the already proven attainment of  the initial condition \eqref{indatameasure}. Indeed, using the fact that $v_0^i$ are independent  of  $t$ and that $\nu^{i}$ are Young measures, we  estimate the last term as follows (here $K$ is a compact set depending on the support of $\varphi$ and $C$ is a constant depending on  $\|\varphi\|_{\infty}$)
$$
\begin{aligned}
&\limsup_{n\to \infty}n\int_0^{\frac{1}{n}}\int_{\R^d}\langle(U(\lambda)-U(k))_+,\nu^{1}_{(t,x)}(\lambda)\otimes\nu^{2}_{(t,x)}(k)\rangle  \varphi (t,x) \dx \dt\\
& \le C\limsup_{n\to \infty}n\int_0^{\frac{1}{n}}\int_{K}\langle (U(\lambda)-U(v_0^1))_+ ,\nu^{1}_{(t,x)}(\lambda)\otimes\nu^{2}_{(t,x)}(k)\rangle  \dx \dt\\
&\quad+C\limsup_{n\to \infty}n\int_0^{\frac{1}{n}}\int_{K}\langle (U(v_0^2)-U(k))_+ ,\nu^{1}_{(t,x)}(\lambda)\otimes\nu^{2}_{(t,x)}(k)\rangle  \dx \dt\\
&\quad+\limsup_{n\to \infty}n\int_0^{\frac{1}{n}}\int_{\R^d}\langle (U(v_0^1)-U(v_0^2))_+ ,\nu^{1}_{(t,x)}(\lambda)\otimes\nu^{2}_{(t,x)}(k)\rangle  \varphi(t,x) \dx \dt\\
& = C\limsup_{n\to \infty}n\int_0^{\frac{1}{n}}\int_{K}\langle (U(\lambda)-U(v_0^1))_+ ,\nu^{1}_{(t,x)}(\lambda)\rangle   \dx \dt\\
&\quad+C\limsup_{n\to \infty}n\int_0^{\frac{1}{n}}\int_{K}\langle (U(v_0^2)-U(k))_+ ,\nu^{2}_{(t,x)}(k)\rangle \dx \dt\\
&\quad+\int_{\R^d}(U(v_0^1)-U(v_0^2))_+ \varphi(0,x) \dx \overset{\eqref{indatameasure}}=\int_{\R^d}(U(v_0^1)-U(v_0^2))_+ \varphi(0,x) \dx.
\end{aligned}
$$
Thus, substituting this inequality into \eqref{find2} we get \eqref{resultin1}.

The second step of the proof focuses on the validity of \eqref{resultin2}. Let us consider two equivalent parameterizations $(U^i, \bA^i, a^i)$ with $i=1,2$ , i.e., satisfying \eqref{eqpa} with some strictly increasing continuous function $\tilde U$ and let $\nu^i$ be two   entropy  measure valued solutions  to \eqref{Kruzmeasure} corresponding to $v^i_0$. We proceed in what follows in a very similar way as before but for simplicity (since it was already shown above), we omit at this moment the mollification with respect to $(t,x)$ and consider all inequalities formally. Thus, we consider for $i=1$ the inequality \eqref{Kruzmeasure} with  the entropy $E(\lambda):=E^+_n (\lambda -\beta_1)$, where $\beta_1\in \mathbb{R}$ and $n\in \mathbb{R}_+$ are arbitrary, and with this setting we obtain
\begin{equation}
\begin{aligned}
&\partial_t\langle(U^1(\lambda)-U^1(\beta_1))_+,\nu^{1}_{(t,x)}(\lambda)\rangle \! + \!\diver_x \langle (\bA^1(\lambda)-\bA^1(\beta_1))\bbbone_{\{\lambda \ge \beta_1\}},\nu^{1}_{(t,x)}(\lambda)\rangle\\
&\le \langle a^{1}(\lambda)(2n(\lambda-\beta_1)\bbbone_{\{\lambda \in (\beta_1,\beta_1+\frac{1}{2n})\}}+\bbbone_{\{\lambda \ge \beta_1+\frac{1}{2n}\}} ,\nu^{1}_{(t,x)}\rangle  + P_1^n(\beta_1),\label{Kruzp2}
\end{aligned}
\end{equation}
where $P_1^n(\beta_1)\to 0$ in $\mathcal{C}_{loc}(\mathbb{R})$ as $n\to \infty$. In a very similar way, for an arbitrary $\beta_2\in \R$ and $m\in \mathbb{R}_+$ (note here that we do not require $m=n$), we use the entropy $E(k):=E^{-}_n(k-\beta_2)$ for $(U^2,\bA^2,a^2)$ and conclude that
\begin{equation}
\begin{aligned}
&\partial_t\langle(U^2(\beta_2)-U^2(k))_+,\nu^{2}_{(t,x)}(k)\rangle + \diver_x \langle (\bA^2(\beta_2)-\bA^2(k))\bbbone_{\{\beta_2 \ge k\}},\nu^{2}_{(t,x)}(k)\rangle\\
&\le -\langle a^{2}(k)(2m(\beta_2-k)\bbbone_{\{\beta_2 \in (k,k+\frac{1}{2m})\}}+\bbbone_{\{\beta_2 \ge k+\frac{1}{2m}\}} ,\nu^{2}_{(t,x)}(k)\rangle  + P_2^m(\beta_2).\label{Kruzp3}
\end{aligned}
\end{equation}
Next,  for an arbitrary $k\in \R$, we set $\beta_1:=\tilde{U}(k)$ and consider an  arbitrary strictly positive function $n(k)\in \mathcal{C}(\R)$ in \eqref{Kruzp2}. With the help of  \eqref{eqpa}, the inequality \eqref{Kruzp2} can be rewritten as
\begin{equation}
\begin{aligned}
&\partial_t\langle(U^1(\lambda)-U^2(k))_+,\nu^{1}_{(t,x)}(\lambda)\rangle + \diver_x \langle (\bA^1(\lambda)-\bA^2(k))\bbbone_{\{\lambda \ge \tilde{U}(k)\}},\nu^{1}_{(t,x)}(\lambda)\rangle\\
& \le \langle a^{1}(\lambda)(2n(k)(\lambda-\tilde{U}(k))\bbbone_{\{\lambda \in (\tilde{U}(k),\tilde{U}(k)+\frac{1}{2n(k)})\}}+\bbbone_{\{\lambda \ge \tilde{U}(k)+\frac{1}{2n(k)}\}} ,\nu^{1}_{(t,x)}(\lambda)\rangle \\
&\qquad  + P_1^{n(k)}(\tilde{U}(k)).\label{Kruzp2a}
\end{aligned}
\end{equation}
Similarly,  setting $\beta_2:=(\tilde{U})^{-1}(\lambda)$ for arbitrary $\lambda\in \R$ and considering arbitrary positive function $m\in \mathcal{C}(\R)$ in \eqref{Kruzp3}, we obtain due to the strict monotonicity of $\tilde{U}$
\begin{equation}
\begin{aligned}
&\partial_t\langle(U^1(\lambda)-U^2(k))_+,\nu^{2}_{(t,x)}(k)\rangle + \diver_x \langle (\bA^1(\lambda)-\bA^2(k))\bbbone_{\{\lambda \ge \tilde{U}(k)\}},\nu^{2}_{(t,x)}(k)\rangle\\
& \le -\left\langle a^{2}(k)(2m(\lambda)((\tilde{U})^{-1}(\lambda)-k)\bbbone_{\{(\tilde{U})^{-1}(\lambda) \in (k,k+\frac{1}{2m(\lambda)})\}}\right.\\
&\qquad \; \qquad \left.+\bbbone_{\{(\tilde{U})^{-1}(\lambda) \ge k+\frac{1}{2m(\lambda)}\}} ,\nu^{2}_{(t,x)}(k)\right\rangle \\
 &\qquad + P_2^{m(\lambda)}((\tilde{U})^{-1}(\lambda)).\label{Kruzp3a}
\end{aligned}
\end{equation}
Then, applying $\nu^{2}_{(t,x)}(k)$ to \eqref{Kruzp2a} and $\nu^{1}_{(t,x)}(\lambda)$ to \eqref{Kruzp3a} and summing the resulting inequalities, we obtain
\begin{equation}
\begin{aligned}
&\partial_t\langle(U^1(\lambda)-U^2(k))_+,\nu^{1}_{(t,x)}(\lambda)\otimes\nu^{2}_{(t,x)}(k)\rangle \\
&\quad+ \diver_x \langle (\bA^1(\lambda)-\bA^2(k))\bbbone_{\{\lambda \ge \tilde{U}(k)\}},\nu^{1}_{(t,x)}(\lambda)\otimes\nu^{2}_{(t,x)}(k)\rangle\\
& \le \left\langle a^{1}(\lambda)(2n(k)(\lambda-\tilde{U}(k))\bbbone_{\{\lambda \in (\tilde{U}(k),\tilde{U}(k)+\frac{1}{2n(k)})\}} \right. \\
&\quad \left.+\bbbone_{\{\lambda \ge \tilde{U}(k)+\frac{1}{2n(k)}\}}) ,\nu^{1}_{(t,x)}(\lambda)\otimes \nu^2_{(t,x)}(k)\right\rangle\\
&-\left\langle a^{2}(k)(2m(\lambda)((\tilde{U})^{-1}(\lambda)-k)\bbbone_{\{(\tilde{U})^{-1}(\lambda) \in (k,k+\frac{1}{2m(\lambda)})\}}\right. \\
&\quad \left. +\bbbone_{\{(\tilde{U})^{-1}(\lambda) \ge k+\frac{1}{2m(\lambda)}\}}) ,\nu^1_{(t,x)}(\lambda)\otimes \nu^{2}_{(t,x)}(k)\right\rangle\\
&\quad +Y(n,m),\label{Kruzp4}
\end{aligned}
\end{equation}
where $Y(n,m) \to 0$ provided that the functions $n(k)\to \infty$ and $m(\lambda)\to \infty$ in $\mathcal{C}_{loc}(\R)$ (note that for this we use the fact that $\nu^i$ are compactly supported). Next, we focus on the limit with respect to $n$ in the terms on the right hand side. Using \eqref{eqpa}, we see that $a^2(k)=a^1(\tilde{U}(k))$ and consequently, we have
\begin{equation*}
\begin{aligned}
&a^{1}(\lambda)(2n(k)(\lambda-\tilde{U}(k))\bbbone_{\{\lambda \in (\tilde{U}(k),\tilde{U}(k)+\frac{1}{2n(k)})\}}+\bbbone_{\{\lambda \ge \tilde{U}(k)+\frac{1}{2n(k)}\}})\\
& - a^{2}(k)(2m(\lambda)((\tilde{U})^{-1}(\lambda)-k)\bbbone_{\{(\tilde{U})^{-1}(\lambda) \in (k,k+\frac{1}{2m(\lambda)})\}}+\bbbone_{\{(\tilde{U})^{-1}(\lambda) \ge k+\frac{1}{2m(\lambda)}\}})\\
&=(a^{1}(\lambda)-a^1(\tilde{U}(k))\left(2n(k)(\lambda-\tilde{U}(k))\bbbone_{\{\lambda \in (\tilde{U}(k),\tilde{U}(k)+\frac{1}{2n(k)})\}}\right.\\
&\quad \; \quad\left.+\bbbone_{\{\lambda \ge \tilde{U}(k)+\frac{1}{2n(k)}\}}\right)\\
&  +a^2(k)\left(2n(k)(\lambda-\tilde{U}(k))\bbbone_{\{\lambda \in (\tilde{U}(k),\tilde{U}(k)+\frac{1}{2n(k)})\}}+\bbbone_{\{\lambda \ge \tilde{U}(k)+\frac{1}{2n(k)}\}})\right.\\
&\quad \; \quad\left. - 2m(\lambda)((\tilde{U})^{-1}(\lambda)-k)\bbbone_{\{(\tilde{U})^{-1}(\lambda) \in (k,k+\frac{1}{2m(\lambda)})\}}-\bbbone_{\{(\tilde{U})^{-1}(\lambda) \ge k+\frac{1}{2m(\lambda)}\}}\right)\\
&=:Z_1^{n,m}(\lambda,k)+Z_2^{n,m}(\lambda,k).
\end{aligned}
\end{equation*}
We see that if $n,m \to \infty \in \mathcal{C}_{loc}(\R)$ then
\begin{equation}\label{Zunif}
Z_1^{m,n}(\lambda,k) \to (a^{1}(\lambda)-a^1(\tilde{U}(k))\bbbone_{\{\lambda \ge \tilde{U}(k)\}} \qquad \textrm{ in } \mathcal{C}_{loc}(\R^2).
\end{equation}
In order to identify also the limit of $Z_2^{n,m}$, we define a special sequence of function $n$ and $m$ in the following way
$$
\begin{aligned}
m(\lambda)&:=m \to \infty &&\textrm{as } m\to \infty,\\
n(k)&:=\frac{1}{2(\tilde{U}(k+\frac{1}{2m})-\tilde{U}(k))}\to \infty &&\textrm{as } m\to \infty.
\end{aligned}
$$
The reason for such a choice is that
$$
\tilde{U}\left(k+\frac{1}{2m}\right) = \tilde{U}(k)+\frac{1}{2n(k)}
$$
and consequently $\bbbone_{\{(\tilde{U})^{-1}(\lambda) \ge k+\frac{1}{2m(\lambda)}\}}= \bbbone_{\{\lambda \ge \tilde{U}(k)+\frac{1}{2n(k)}\}}$. Hence, denoting also $\bbbone:=\bbbone_{\{\lambda \in (\tilde{U}(k),\tilde{U}(k+\frac{1}{2m}))\}}$, we see that $Z_2^{n,m}$ can be estimated as
\begin{equation*}
\begin{aligned}
Z_2^{n,m}(\lambda,k)&=a^2(k)\bbbone\left(\frac{\lambda-\tilde{U}(k)}{\tilde{U}(k+\frac{1}{2m})-\tilde{U}(k)}- 2m((\tilde{U})^{-1}(\lambda)-k)\right)\\
&=a^2(k)\bbbone \ 2m(\lambda-\tilde{U}(k))\left(\frac{\frac{1}{2m}}{\tilde{U}(k+\frac{1}{2m})-\tilde{U}(k)}- \frac{(\tilde{U})^{-1}(\lambda)-k}{\lambda-\tilde{U}(k)}\right)\\
&=a^2(k)\bbbone \ 2m(\lambda-\tilde{U}(k))\left(\frac{\frac{1}{2m}}{\tilde{U}(k+\frac{1}{2m})-\tilde{U}(k)}- \frac{(\tilde{U})^{-1}(\lambda)-k}{\tilde{U}((\tilde{U})^{-1}(\lambda))-U(k)}\right).\\
\end{aligned}
\end{equation*}
Hence, since $\tilde{U}\in \mathcal{C}^1_{loc}(\mathbb{R})$, we see that for all $\lambda \in (\tilde{U}(k),\tilde{U}(k+\frac{1}{2m}))$ and  for $m\to \infty$
\begin{equation}\label{domu}
\begin{aligned}
|Z_2^{n,m}(\lambda,k)|&\le |a^2(k)|\left|\frac{\frac{1}{2m}}{\tilde{U}(k+\frac{1}{2m})-\tilde{U}(k)}- \frac{(\tilde{U})^{-1}(\lambda)-k}{\tilde{U}((\tilde{U})^{-1}(\lambda))-U(k)}\right|\\
&\to |a^2(k)|\left|\frac{1}{\tilde{U}'(k)}- \frac{1}{\tilde{U}'(k)}\right|=0.
\end{aligned}
\end{equation}
Consequently, using the uniform convergence results \eqref{Zunif} and \eqref{domu}, we let  $m\to \infty$ in \eqref{Kruzp4} to obtain that in the sense of distributions
\begin{equation*}
\begin{aligned}
&\partial_t\langle(U^1(\lambda)-U^2(k))_+,\nu^{1}_{(t,x)}(\lambda)\otimes\nu^{2}_{(t,x)}(k)\rangle \\
&\quad+ \diver_x \langle (\bA^1(\lambda)-\bA^2(k))\bbbone_{\{\lambda \ge \tilde{U}(k)\}},\nu^{1}_{(t,x)}(\lambda)\otimes\nu^{2}_{(t,x)}(k)\rangle\\
& \le \langle (a^{1}(\lambda)-a^2(k))\bbbone_{\{\lambda \ge \tilde{U}(k)\}},\nu^{1}_{(t,x)}(\lambda)\otimes \nu^2_{(t,x)}(k)\rangle.
\end{aligned}
\end{equation*}
Thus, repeating the same procedure from the previous step, we obtain \eqref{resultin2}.
\end{proof}

\subsection{Kinetic approximation}
In this subsection, we will try to follow the standard kinetic approximation scheme, which in our case takes the form
\begin{align}
\left.\begin{aligned}
U'(\xi)\partial_t f_{\lambda} + \bA'(\xi)\cdot \nabla_x f_{\lambda} + a(\xi)\partial_{\xi} f_{\lambda}&=\lambda (\chi(\xi,v_{\lambda})-f_{\lambda})\textrm{ in }\mathbb{R}_+^{d+2},\\
v_{\lambda}(t,x)&:=\int_{\mathbb{R}} f_{\lambda}(t,x,\xi)\dxi,\\
f_{\lambda}(0,x,\xi)&=\chi(\xi,v_0(x)).
\end{aligned}\right.
\label{E1ap}
\end{align}
However, due to the possible non-smoothness of the parametrization of the graphs $\mathcal{A}_{\bF}$ and $\mathcal{A}_G$ the above problem might not be easily solvable. Therefore, we formulate in this subsection all  results for smooth parametrization and provide uniform estimates that will be however independent of the regularization. This result then allows us to pass to the limit from the kinetic approximative scheme to the weak entropy solution of the problem in Section~\ref{S4}. The basis of the forthcoming existence  analysis it the following lemma.
\begin{Lemma}\label{KAex}
Let $U\in \mathcal{C}^{\infty}(\R)$, $\bA\in \mathcal{C}^{\infty}(\mathbb{R}; \mathbb{R}^d)$ and $a\in \mathcal{C}^{\infty}(\mathbb{R})$ be globally Lipschitz, i.e., there exists a constant $K$ such that for all $\xi \in \R$
$$
|U'(\xi)|+ |a'(\xi)|+ |\bA'(\xi)|\le K.
$$
Further, assume that $U(0)=a(0)=0$ and that $U$ is uniformly increasing, i.e., there exists $\varepsilon>0$ such that for all $\xi\in \R$ there holds
\begin{equation}\label{strictinc}
 U'(\xi)\ge \varepsilon.
\end{equation}
Then for any $v_0 \in L^1(\R^{d})\cap L^{\infty}(\R^{d})$ and all $\lambda >0$ there exists unique bounded $f_{\lambda}\in \mathcal{C}(0,T; L^1(\R^{d+1}))$ solving for all $\psi \in \mathcal{D}((0,T)\times \R^{d+1})$
\begin{align}
\left.\begin{aligned}
&\int_{\R^{d+2}}U'(\xi)f_{\lambda}(t,x,\xi)\partial_t \psi(t,x,\xi) + f_{\lambda}(t,x,\xi)\bA'(\xi)\cdot \nabla_x \psi(t,x,\xi)  \dxi \dx \dt\\
&+\int_{\R^{d+2}} a'(\xi)f_{\lambda}(t,x,\xi)\psi(t,x,\xi) + a(\xi) f_{\lambda}(t,x,\xi)\partial_{\xi}\psi(t,x,\xi)\dxi \dx \dt \\
&=\int_{\R^{d+2}}\lambda (f_{\lambda}(t,x,\xi)-\chi(\xi,v_{\lambda}(t,x)))\psi(t,x,\xi) \dxi \dx \dt,\\
&v_{\lambda}(t,x)=\int_{\mathbb{R}} f_{\lambda}(t,x,\xi)\dxi \qquad \textrm{ and } \qquad  f_{\lambda}(0,x,\xi)=\chi(\xi,v_0(x)).
\end{aligned}\right.
\label{E1ap-weak}
\end{align}
Moreover, if $U$ and $a$ satisfy \eqref{Ag_mon} and \eqref{Ag_bound} then the following estimates hold
\begin{equation}\label{uniffl}
\begin{split}
&\sup_{t\in (0,T)} \int_{\R^{d+1}}U'(\xi)|f_{\lambda}(t,x,\xi)|\dxi \dx  \le C(L,T)\int_{\R^{d}}|U(v_0(x))|\dx, \\
&\sup_{t\in (0,T)} \|f_{\lambda}(t)\|_{\infty} \le 1,\\
&\supp f_{\lambda} \subset \left\{(t,x,\xi)\in [0,T]\times \R^{d+1}: \, |\xi|\le e^{Lt}\max (L,\|v_0\|_{\infty})\right\}.\\
&\sup_{t\in (0,T)}\|v_{\lambda}(t)\|_{\infty} \le e^{LT}\max (L,\|v_0\|_{\infty})
\end{split}
\end{equation}
with a constant $C$ independent of $\varepsilon$ and $K$. In addition, we have the following sign property
\begin{equation}
|f_{\lambda}(t,x,\xi)|= f_{\lambda}(t,x,\xi)\sgn\xi \qquad \textrm{ a.e. in } (0,T)\times \R^{d+1}.\label{signf}
\end{equation}
\end{Lemma}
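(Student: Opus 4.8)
Since $U'\ge\varepsilon>0$ by \eqref{strictinc}, I would divide \eqref{E1ap} by $U'(\xi)$ and read the equation as a linear transport equation in $(t,x,\xi)$ with a relaxation (BGK) right-hand side,
$$\partial_t f_\lambda + \frac{\bA'(\xi)}{U'(\xi)}\cdot\nabla_x f_\lambda + \frac{a(\xi)}{U'(\xi)}\,\partial_\xi f_\lambda + \frac{\lambda}{U'(\xi)}\,f_\lambda = \frac{\lambda}{U'(\xi)}\,\chi(\xi,v_\lambda),\qquad v_\lambda=\int_{\R} f_\lambda\dxi .$$
The transport coefficients are smooth and the only nonlocality sits in $v_\lambda$ inside $\chi$. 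To build a solution I would freeze $v\in\mathcal{C}([0,T];L^1(\R^d))$, solve the \emph{linear} problem (with $\chi(\xi,v)$ on the right) by characteristics, and set up a contraction for the map $\mathcal T\colon v\mapsto\int_\R f[v]\dxi$. Here the $\xi$-characteristic $\dot\Xi=a(\Xi)/U'(\Xi)$ is autonomous with at most linear growth (as $a$ is Lipschitz and $a(0)=0$), so its flow is global and smooth, and $\dot X=\bA'(\Xi)/U'(\Xi)$ then determines $X$; Duhamel's formula along this flow gives $f[v]$ uniquely. The gain is the $L^1$-Lipschitz identity \eqref{prop3}: the difference $g=f[v_1]-f[v_2]$ solves the same transport equation with zero data and source $\frac{\lambda}{U'}(\chi(\xi,v_1)-\chi(\xi,v_2))$, and an $L^1(\R^{d+1})$ estimate (the $x$-transport term is divergence free in $x$, the $\xi$-transport contributes only the factor $\partial_\xi(a/U')$ which is bounded on the relevant $\xi$-range) yields $\sup_{[0,\delta]}\|g\|_{L^1}\le C\delta\sup_{[0,\delta]}\|v_1-v_2\|_{L^1}$. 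This is a contraction for $\delta$ small, uniformly in the initial time, so iterating over $[0,T]$ produces the unique fixed point solving \eqref{E1ap-weak}; uniqueness of the full problem follows from the same Gronwall estimate using $\|v_1-v_2\|_{L^1}\le\|f_1-f_2\|_{L^1(\R^{d+1})}$.

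\textbf{Pointwise bound and sign property.} Along a characteristic $(t,X(t),\Xi(t))$ the value $y(t):=f_\lambda(t,X(t),\Xi(t))$ solves the scalar ODE $\dot y=\frac{\lambda}{U'(\Xi)}\bigl(\chi(\Xi,v_\lambda(t,X))-y\bigr)$. Since $|\chi|\le1$ and the relaxation coefficient is positive, the interval $[-1,1]$ is invariant, which gives $\|f_\lambda(t)\|_\infty\le1$ because the datum satisfies $|\chi(\xi,v_0)|\le1$. For \eqref{signf} the key is that $a(0)=0$ makes $\Xi=0$ a stationary point of the autonomous $\xi$-characteristic, so characteristics never cross $\{\xi=0\}$ and the sign of $\Xi(t)$ is preserved; since $\chi(\xi,\cdot)\ge0$ for $\xi>0$ and $\le0$ for $\xi<0$ by \eqref{chi}, the same comparison forces $f_\lambda$ to keep the sign of $\xi$, i.e.\ $|f_\lambda|=f_\lambda\sgn\xi$.

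\textbf{Support and bound on $v_\lambda$.} Set $M(t):=e^{Lt}\max(L,\|v_0\|_\infty)$, the solution of $M'=LM$. On $\{\Xi>L\}$ assumption \eqref{Ag_bound} gives $\dot\Xi=a(\Xi)/U'(\Xi)\le L\Xi$, and symmetrically $\frac{d}{dt}|\Xi|\le L|\Xi|$ on $\{\Xi<-L\}$, so the slab $\{|\xi|\le M(t)\}$ is forward invariant for the flow. The relaxation source $\chi(\xi,v_\lambda)$ is supported in $\{|\xi|\le|v_\lambda|\}$, and the observation that closes the loop is that the sign property together with $\|f_\lambda\|_\infty\le1$ give $|v_\lambda|=\bigl|\int_0^\infty f_\lambda\dxi+\int_{-\infty}^0 f_\lambda\dxi\bigr|\le\sup\{|\xi|:f_\lambda\ne0\}$. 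A continuity (bootstrap) argument then yields simultaneously $\supp f_\lambda(t)\subset\{|\xi|\le M(t)\}$ and $\|v_\lambda(t)\|_\infty\le M(t)$, the last two lines of \eqref{uniffl}. This self-consistent support control, forced by the linearly growing coefficient $a(\xi)/U'(\xi)$, is the main obstacle and is also what guarantees the global-in-$\xi$ solvability used above (alternatively one first truncates the coefficients for large $|\xi|$, solves, and removes the truncation using the support bound).

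\textbf{Weighted $L^1$ estimate.} Finally I would test \eqref{E1ap} with $\sgn\xi$, which by \eqref{signf} equals $\sgn f_\lambda$. The $x$-transport term integrates to zero; integrating the $\xi$-transport term by parts produces $-\int a'(\xi)|f_\lambda|$ with the boundary contribution $2a(0)\delta_0$ dropping by \eqref{Ag_0}; and the relaxation term equals $\lambda\int_{\R^d}|v_\lambda|\dx-\lambda\int_{\R^{d+1}}|f_\lambda|\dxi\dx\le0$ since $\int_\R\chi(\xi,v_\lambda)\sgn\xi\dxi=|v_\lambda|$ by \eqref{prop3} while $\int_{\R^d}|v_\lambda|\dx\le\int_{\R^{d+1}}|f_\lambda|\dxi\dx$. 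This leaves
$$\frac{d}{dt}\int_{\R^{d+1}}U'(\xi)|f_\lambda|\dxi\dx\le\int_{\R^{d+1}}a'(\xi)|f_\lambda|\dxi\dx\le L\int_{\R^{d+1}}U'(\xi)|f_\lambda|\dxi\dx,$$
where the last step is the differentiated form $a'\le LU'$ of \eqref{Ag_mon}. Gronwall's lemma together with the computation $\int_\R U'(\xi)|\chi(\xi,v_0)|\dxi=|U(v_0)|$ (from \eqref{prop2}) gives the first estimate in \eqref{uniffl} with $C(L,T)=e^{LT}$. As $\sgn\xi$ is only Lipschitz, each identity is justified by a routine regularization of $\sgn$ and of $f_\lambda$.
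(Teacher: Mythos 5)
Your proposal follows essentially the same route as the paper's proof: freeze $v$, solve the linear transport problem along characteristics (the paper invokes the classical theory and then extends to $L^1$ data by density), run a Banach fixed point whose contraction rests on the identity \eqref{prop3}, obtain \eqref{signf} from $a(0)=0$ (the paper integrates the equation for $f_g^-$ over $\xi>0$; your observation that characteristics cannot cross $\{\xi=0\}$ is the Lagrangian version of the same fact), get $\|f_\lambda\|_\infty\le 1$ by comparison with the relaxation term, the support bound by comparison with $\chi(\xi,Me^{Lt})$ --- the paper's explicit supersolution $\bar f=\chi(\xi,Me^{Kt})$ with $M=\max(L,\|v_0\|_\infty)$, $K=L$ is exactly your invariant slab, justified by the same use of \eqref{Ag_bound} --- and the first estimate in \eqref{uniffl} from the weighted Gronwall argument with $a'\le LU'$ deduced from \eqref{Ag_mon}, together with $\int_{\R}U'(\xi)|\chi(\xi,v_0)|\dxi=|U(v_0)|$. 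Your closing computation $|v_\lambda|\le\sup\{|\xi|:f_\lambda\neq 0\}$ from the sign property and $|f_\lambda|\le 1$ is verbatim the paper's derivation of the last line of \eqref{uniffl}.

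One step would fail as literally written: in the contraction estimate you divide the equation by $U'(\xi)$ and assert that $\partial_\xi\bigl(a/U'\bigr)$ is ``bounded on the relevant $\xi$-range''. That derivative contains the term $aU''/(U')^2$, and the hypotheses give only $U\in\mathcal{C}^\infty$ with $\varepsilon\le U'\le K$, so $U''$ may be unbounded; moreover, at the fixed-point stage the frozen $v\in\mathcal{C}([0,T];L^1)$ need not be bounded, so there is no compact $\xi$-range yet (the support bound comes later and in any case requires \eqref{Ag_bound}, which the existence part of the lemma does not assume --- your truncation remark would therefore not rescue the general case). The repair is the device you yourself use in the final step: do not divide by $U'$, but run the $L^1$ estimate with the weight $U'$ on the undivided equation. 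Multiplying the equation for $g=f[v_1]-f[v_2]$ by $\sgn g$ and integrating gives $\frac{d}{dt}\int_{\R^{d+1}}U'(\xi)|g|\dxi\dx\le \int_{\R^{d+1}}a'(\xi)|g|\dxi\dx-\lambda\int_{\R^{d+1}}|g|\dxi\dx+\lambda\|v_1-v_2\|_{L^1(\R^d)}$, which needs only $|a'|\le K$ and, via \eqref{strictinc}, the equivalence of the weighted norm with $L^1$; this is precisely how the paper closes the contraction. With that correction your argument matches the paper's in all essentials; the remaining variations --- pointwise ODEs along characteristics where the paper uses renormalized integral inequalities, and your bootstrap slab in place of the explicit supersolution comparison, which also spares you the paper's use of \eqref{Ag_mon} in the $L^\infty$ bound --- are cosmetic.
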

\begin{proof}
We follow very closely the proof in \cite{Perthame02} and therefore we frequently omit technical details and focus only on the key differences caused by the presence of $U$ (and uniform estimates when $\varepsilon \to 0_+$) and $a$ (and its possible non-smoothness).

First, due to the smoothness of $\bA$, $a$ and $U$ and thanks to the property \eqref{strictinc}, we know that for an arbitrary $g\in \mathcal{D}(\R^{d+2})$ and $f^0 \in \mathcal{D}(\R^{d+1})$ we can find a $\mathcal{C}^1$ solution $f_g$ to the following system (see \cite[Section 3.2]{Ev2010} for details).
\begin{equation}\label{clasfg}
\begin{aligned}
U'(\xi)\partial_t f_g + \bA'(\xi)\cdot \nabla_x f_g + a(\xi)\partial_{\xi} f_g + \lambda f_g &= \lambda g &&\textrm{in } (0,T)\times \R^{d+1},\\
f_g(0)&=f^0 &&\textrm{in }  \R^{d+1}.
\end{aligned}
\end{equation}
In addition, for all $t\in [0,T]$ the function $f_g(t)$ is compactly supported in $\R^{d+1}$. Next, we strengthen this result and show that the above equation is uniquely solvable even for any $g\in L^1(0,T; L^1(\R^{d+1}))$ and $f^0\in L^1(\R^{d+1})$. In what follows, we focus on the a~priori estimates for $\mathcal{C}^1$ solutions. Multiplying \eqref{clasfg} by $p|f_{g}|^{p-2}f_g$ and using the Young inequality, we get
\begin{equation}\label{est1}
\begin{split}
\partial_t \left(U'(\xi) |f_g|^p\right) + \bA'(\xi)\cdot \nabla_x |f_g|^p + a(\xi)\partial_{\xi} |f_g|^p + \lambda |f_g|^p &\le  \lambda|g|^p.
\end{split}
\end{equation}
Next, integrating the result with respect to $x$ and $\xi$, using the integration by parts and the fact that $f_g$ has compact support, we deduce
\begin{equation}\label{est2}
\begin{split}
\frac{d}{dt}\int_{\R^{d+1}}U'(\xi) |f_g|^p \dxi \dx \le \int_{\R^{d+1}} a'(\xi)|f_g|^p +  \lambda (|g|^p-|f_g|^p)\dx \dxi.
\end{split}
\end{equation}
Thus, using \eqref{strictinc} and the Grownwall inequality, we get
\begin{equation}\label{apr45}
\sup_{t\in (0,T)} \|f_{g}(t)\|_p \le C(K,\varepsilon,\lambda)\left(\|f^0\|_p + \left(\int_0^T \|g\|_p^p \dt \right)^{\frac{1}{p}}\right).
\end{equation}
Consequently, using the linearity of \eqref{clasfg}, the above estimate and the density of smooth functions, we see that for any $g\in L^1(0,T;L^{1}(\R^{d+1}))$ and $f^0\in L^1 (\R^{d+1})$ we can find a weak solution $f_g$ to \eqref{clasfg}, i.e., $f\in \mathcal{C}([0,T]; L^1(\R^{d+1}))$ solving for any $\psi \in \mathcal{D}((-\infty,T)\times \R^{d+1})$
\begin{equation}\label{weakfg}
\begin{split}
\int_0^T \int_{\R^{d+1}} \!\!\! U'(\xi) f_g \partial_t \psi + \bA'(\xi)f_g\cdot \nabla_x \psi + \partial_{\xi}\left(a(\xi)\psi\right) f_g - \lambda  \psi(f_g- g)\dxi \dx \dt\\
=-\int_{\R^{d+1}}U'(\xi) f^0(x,\xi) \psi(0,x,\xi)\dxi \dx.
\end{split}
\end{equation}
In addition, if $g$ and $f^0$ are bounded then $f_g$ is bounded as well. Moreover, since $U$, $\bA$ and $a$ are smooth, we see that any such solution is also renormalized solution, i.e., for any $h\in \mathcal{C}^1(\R)$ solving  in the sense of distribution in $(0,T)\times \R^{d+1}$
\begin{equation}\label{clasfgren}
\begin{split}
U'(\xi)\partial_t h(f_g) + \bA'(\xi)\cdot \nabla_x h(f_g) + a(\xi)\partial_{\xi} h(f_g) + \lambda h'(f_g)f_g &= \lambda h'(f_g)g.
\end{split}
\end{equation}

In order to finally also show \eqref{signf}, consider bounded compactly supported smooth $g$ and $f^0$ fulfilling for all $\xi \neq 0$ and all $(t,x)\in (0,T)\times \R^{d}$
\begin{equation}\label{signhlp}
\sgn f^0(x,\xi)=\sgn g(t,x,\xi) = \sgn \xi.
\end{equation}
Then, multiplying \eqref{clasfg} by $f^-_g:=\min(0,f_g)$ (or using $h(s):= \min(0,s)$ in \eqref{clasfgren}), we get for all $(t,x)\in (0,T)\times \R^d$ and all $\xi >0$
\begin{equation}\label{clasfgsgn2}
\begin{split}
U'(\xi)\partial_t |f^-_g|^2 + \bA'(\xi)\cdot \nabla_x |f^-_g|^2 + a(\xi)\partial_{\xi} |f^-_g|^2 + 2\lambda |f^-_g|^2 &= 2\lambda g f_g^-\le 0.
\end{split}
\end{equation}
Integrating the result over $x\in \R^{d}$ and $\xi\in (0,\infty)$, using integration by parts (note that all boundary terms vanish since $f_{g}$ has compact support and $a(0)=0$) we get that
\begin{equation}\label{clasfgsgn}
\begin{split}
\frac{d}{dt} \int_{\R^d}\int_0^{\infty}U'(\xi) |f^-_g|^2\dxi\dx \le \int_{\R^d}\int_0^{\infty} a'(\xi)|f^-_g|^2 \dxi \dx.
\end{split}
\end{equation}
Hence, using \eqref{strictinc}, \eqref{signhlp}, smoothness of $a$ and the Gronwall lemma, we see that
$$
f^-_g =0 \implies f_g\ge 0 \quad \textrm {on } (0,T)\times \R^d \times (0,\infty).
$$
Similarly, one can deduce the same result also for negative $\xi$ and consequently get that
\begin{equation}\label{signhlp2}
\sgn f_g(t,x,\xi)=\sgn \xi \qquad \textrm{almost everywhere in } (0,T) \times \mathbb{R}^{d+1}.
\end{equation}

Next, we focus on solvability of \eqref{E1ap-weak}. Defining $f^0(x,\xi):=\chi(\xi,v_0(x))$, we see that $f^0$ is bounded compactly supported  and in addition satisfies
\begin{equation}\label{we1}
\int_{\R^{d+1}} |f^0(x,\xi)|\dxi\dx =\int_{\R^{d+1}} |\chi(\xi,v_0(x))|\dxi\dx \overset{\eqref{prop2}}= \int_{\R^d}|v_0(x)|\dx <\infty.
\end{equation}
Similarly, setting for arbitrary $\bar{f} \in L^1((0,T)\times \R^{d+1})$
\begin{equation}\label{dosad}
\bar{v}(t,x):= \int_{\R}\bar{f}(t,x,\xi)\dxi, \qquad g(t,x,\xi):=\chi(\xi,\bar{v}(t,x)),
\end{equation}
we see that $g$ is bounded and satisfies
\begin{equation}\label{we2}
\begin{split}
\int_0^T\int_{\R^{d+1}} |g(t,x,\xi)|\dxi\dx \dt&=\int_0^T\int_{\R^{d+1}} |\chi(\xi,\bar{v}(t,x))|\dxi\dx\dt\\
 &\overset{\eqref{prop2}}= \int_0^T\int_{\R^d}|\bar{v}(t,x)|\dx\dt \\
 &\le \int_0^T \int_{\R^{d+1}} |\bar{f}(t,x,\xi)|\dxi\dx\dt <\infty.
\end{split}
\end{equation}
In addition, we see that for such a choice of $f^0$ and $g$, the relation \eqref{signhlp} trivially holds. Therefore, for any $\bar{f}$ we can construct a unique solution to \eqref{weakfg}, which in addition satisfies \eqref{clasfgren} in the distributional sense. Finally, we show that the mapping $\bar{f}\mapsto f_g$ is a contraction in $L^1(0,T^*;L^1(\R^{d+1}))$ for some $T^*>0$ and therefore has a fixed point. In addition, from the uniform estimates below, we shall see that this local in time solution can be extended to the whole time interval $(0,T)$.

To show the contraction property, assume that $\bar{f}_1,\bar{f}_2\in L^1(0,T^*; L^1(\R^{d+1}))$,  consider two corresponding solutions $f_1:=f_{g_1}$ and $f_2:=f_{g_2}$ and denote $w:=f_1-f_2$ and $\bar{w}:=\bar{f}_1-\bar{f}_2$. Then, it  satisfies in the sense of distributions
\begin{equation}\label{deiclasfg}
\begin{split}
U'(\xi)\partial_t w + \bA'(\xi)\cdot \nabla_x w + a(\xi)\partial_{\xi} w + \lambda w &= \lambda (\chi(\xi,\bar{v}_1)-\chi(\xi,\bar{v}_2)),\\
w(0,x,\xi)&=0.
\end{split}
\end{equation}
Due to the smoothness of the coefficients, this equation can be also renormalized and therefore we even get
\begin{equation}\label{deiclasfgm}
\begin{split}
U'(\xi)\partial_t |w| + \bA'(\xi)\cdot \nabla_x |w| + a(\xi)\partial_{\xi} |w| + \lambda |w| &\le \lambda |\chi(\xi,\bar{v}_1)-\chi(\xi,\bar{v}_2)|.
\end{split}
\end{equation}
Since $w\in L^1$, we can in fact integrate the above inequality over $\R^{d+1}$ to obtain\footnote{This step can be rigorously justified by testing  with $\psi\in \mathcal{D}(\R^{d+1})$ and letting $\psi\nearrow 1$ since all terms involving $\nabla \psi$ vanish.}
\begin{equation}\label{deiclasfgm2}
\begin{split}
&\frac{d}{dt}\int_{\R^{d+1}} U'(\xi)|w| \dxi \dx  + \lambda \int_{\R^{d+1}}|w|\dxi\dx\\
 &\qquad \le \lambda \int_{\R^{d+1}}|\chi(\xi,\bar{v}_1)-\chi(\xi,\bar{v}_2)|\dxi \dx +\int_{\R^{d+1}}a'(\xi)|w|\dxi\dx\\
&\qquad = \lambda \int_{\R^{d}}|\bar{v}_1-\bar{v}_2|\dx +\int_{\R^{d+1}}a'(\xi)|w|\dxi\dx\\
&\qquad \le \lambda \int_{\R^{d+1}}|\bar{w}|\dxi \dx +\int_{\R^{d+1}}a'(\xi)|w|\dxi\dx,
\end{split}
\end{equation}
where we used \eqref{prop3} to obtain the second equality and the triangle inequality to get the last inequality. Consequently, using our assumptions on $U$ and $a$ this inequality reduces to (note that $C>1$)
\begin{equation*}
\begin{split}
\frac{d}{dt}\int_{\R^{d+1}} U'(\xi)|w| \dxi \dx  \le C(K,\varepsilon,\lambda)\int_{\R^{d+1}} U'(\xi)(|w| +|\bar{w}|)\dxi \dx.
\end{split}
\end{equation*}
This  finally leads after using the Gronwall lemma and the fact that $w(0)=0$ to
\begin{equation*}
\begin{split}
\int_{\R^{d+1}} U'(\xi)|w(t)| \dxi \dx  \le e^{C(K,\varepsilon,\lambda)t}\int_0^t\int_{\R^{d+1}}e^{-C(K,\varepsilon,\lambda)s}U'(\xi)|\bar{w}|\dxi \dx \ds.
\end{split}
\end{equation*}
Integration of the result over $(0,T^*)$ and using the integration by parts then gives us the following estimate
\begin{equation*}
\begin{split}
\int_0^{T^*}\int_{\R^{d+1}} U'(\xi)|w(t)| \dxi \dx\dt  &\le \int_0^{T^*}\int_0^t\int_{\R^{d+1}}e^{C(K,\varepsilon,\lambda)(t-s)}U'(\xi)|\bar{w}(s)|\dxi \dx \ds\dt\\
&\le \frac{e^{C(K,\varepsilon,\lambda)T^*}}{C(K,\varepsilon,\lambda)}\int_0^{T^*}\int_{\R^{d+1}}U'(\xi)|\bar{w}(t)|\dxi \dx \dt.
\end{split}
\end{equation*}
Finally, since $C>1$ we can find $T^*$ so small that for some $\gamma<1$
\begin{equation*}
\begin{split}
\int_0^{T^*}\int_{\R^{d+1}} U'(\xi)|w| \dxi \dx\dt
&\le \gamma\int_0^{T^*}\int_{\R^{d+1}}U'(\xi)|\bar{w}|\dxi \dx \dt
\end{split}
\end{equation*}
and thus we got a contraction on the space $L^{1}((0,T^*)\times \R^d; L^1(\R;U'\dxi))$, which is thanks to \eqref{strictinc} identical to $L^1$. Consequently, we get a fix point, i.e., the solution to \eqref{E1ap-weak}. In addition, it is evident that $f_{\lambda}$ satisfies \eqref{signf}. Moreover, it is also the renormalized solution, i.e., it satisfies  for all  Lipschitz $h$ in the sense of distributions
\begin{equation}\label{ren4}
\begin{split}
U'(\xi)\partial_t h(f_{\lambda}) + \bA'(\xi)\cdot \nabla_x h(f_g) + a(\xi)\partial_{\xi} h(f_{\lambda}) + \lambda h'(f_{\lambda})f_{\lambda} &= \lambda h'(f_{\lambda})\chi(\xi,v_{\lambda}).
\end{split}
\end{equation}

The rest of the proof is devoted to the uniform ($\varepsilon$ and $K$ independent) estimates. Setting $h(s):=|s|$ in \eqref{ren4} and integrating over $\mathbb{R}^{d+1}$, we get by using integration by parts (note here that $f_{\lambda}(t)\in L^1(\R^{d+1})$ and that $\sgn f_{\lambda}=\sgn \chi$)
\begin{equation*}
\begin{split}
&\frac{d}{dt}\int_{\R^{d+1}} U'(\xi)|f_{\lambda}| \dxi \dx  + \lambda \int_{\R^{d+1}}|f_{\lambda}|\dxi\dx\\
&\qquad = \lambda \int_{\R^{d+1}}|\chi(\xi,v_{\lambda})|\dxi \dx +\int_{\R^{d+1}}a'(\xi)|f_{\lambda}|\dxi\dx\\
&\qquad \overset{\eqref{prop3}}= \lambda \int_{\R^{d}}|v_{\lambda}| \dx +\int_{\R^{d+1}}a'(\xi)|f_{\lambda}|\dxi\dx\\
 &\qquad \le \lambda \int_{\R^{d+1}}|f_{\lambda}|\dxi \dx +\int_{\R^{d+1}}a'(\xi)|f_{\lambda}|\dxi\dx,
\end{split}
\end{equation*}
which directly leads to
\begin{equation*}
\begin{split}
&\frac{d}{dt}\int_{\R^{d+1}} U'(\xi)|f_{\lambda}| \dxi \dx \le \int_{\R^{d+1}}a'(\xi)|f_{\lambda}|\dxi\dx.
\end{split}
\end{equation*}
Next, since $a$ and $U$ are assumed to be smooth, it follows from \eqref{Ag_mon} that $a'\le LU'$ and therefore
\begin{equation}\label{ineq}
\begin{split}
&\frac{d}{dt}\int_{\R^{d+1}} U'(\xi)|f_{\lambda}| \dxi \dx \le L\int_{\R^{d+1}}U'(\xi)|f_{\lambda}|\dxi\dx.
\end{split}
\end{equation}
The Gronwall lemma, the identity \eqref{prop2} and the fact that $U(0)=0$ then lead for all $t\in (0,T)$ to the estimate
\begin{equation*}
\begin{split}
&\int_{\R^{d+1}}U'(\xi)|f_{\lambda}(t,x,\xi)| \dxi \dx \le e^{Lt}\int_{\R^{d+1}}U'(\xi)|f^0(\xi,x)| \dxi \dx\\
&\qquad= e^{Lt}\int_{\R^{d+1}}U'(\xi)|\chi(\xi,v_0(x))| \dxi \dx = e^{Lt}\int_{\R^{d}}|U(v_0(x))|\dx,
\end{split}
\end{equation*}
which directly implies the first estimate in \eqref{uniffl}. To show the second part of the estimate, we set $h(s):=(s-1)_+$ in \eqref{ren4} and after integration over $\mathbb{R}^{d+1}$ and integration by parts (notice that all ``boundary" terms again vanish), we obtain
\begin{equation*}
\begin{split}
&\frac{d}{dt}\int_{\R^{d+1}}U'(\xi)(f_{\lambda}-1)_+ \dxi \dx\\
&\qquad \le \int_{\R^{d+1}}a'(\xi)(f_{\lambda}-1)_+\dxi \dx + \lambda \int_{\{f_{\lambda}\ge 1\}}\chi(\xi,v_{\lambda})-f_{\lambda} \dxi \dx\\
&\qquad \le L\int_{\R^{d+1}}U'(\xi)(f_{\lambda}-1)_+\dxi \dx,
\end{split}
\end{equation*}
where we used the assumption \eqref{Ag_mon} to estimate the first integral and the obvious inequality $\chi-f_{\lambda}\le 0$ on the set, where $f_{\lambda}\ge 1$. Consequently, since $f^0(x,\xi)=\chi(\xi,v_0(x))\le 1$, we can use the Gronwall lemma to conclude $f_{\lambda}\le 1$. In the same way, we can deduce that $f_{\lambda}\ge -1$ and consequently, we deduce the second part of the estimate \eqref{uniffl}.

Finally, to finish the proof, it remains to check the last inequality in \eqref{uniffl}. First, let us consider an auxiliary function $\bar{f}:=\chi(\xi,Me^{Kt})$ with some constants $M$ and $K$. Next, we show that it is a supersolution to \eqref{E1ap}, i.e., that for arbitrary nonnegative $\psi\in \mathcal{D}((0,T)\times \R^{d+1})$ there holds
\begin{equation}
\begin{split}
&-\int_{\R^{d+2}} U'(\xi)\bar{f}\partial_t \psi + \bar{f}\bA'(\xi)\cdot \nabla_x \psi +\bar{f} \partial_{\xi}(a(\xi)\psi)\dxi \dx \dt \\
&\quad \ge \lambda \int_{\R^{d+2}} \psi(\xi)\chi\left(\xi,\int_{\R}\bar{f}(t,x,s)\ds\right) -\psi(\xi)\bar{f}(t,x,\xi)  \dxi \dx \dt.
\end{split}\label{supers}
\end{equation}
Since, it trivially follows from the definition of $\bar{f}$ that
$$
\chi\left(\xi,\int_{\R}\bar{f}(t,x,s)\ds\right)=\bar{f}(t,x,\xi),
$$
the  inequality \eqref{supers} reduces to
\begin{equation*}
\begin{split}
\int_{\R^{d+2}} U'(\xi)\bar{f}\partial_t \psi + \bar{f}\bA'(\xi)\cdot \nabla_x \psi +\bar{f} \partial_{\xi}(a(\xi)\psi) \dxi\dx\dt \le 0.
\end{split}
\end{equation*}
Hence, using integration by parts, the fact that $a(0)=0$ and the definition of $\bar{f}$, we get
\begin{equation*}
\begin{split}
&\int_{\R^{d+2}} U'(\xi)\bar{f}\partial_t \psi + \bar{f}\bA'(\xi)\cdot \nabla_x \psi +\bar{f} \partial_{\xi}(a(\xi)\psi)\dxi\dx\dt \\
&=\int_{0}^{\infty}\int_{\R^d}\int_0^{Me^{Kt}} U'(\xi)\partial_t \psi(t,x,\xi)  +\partial_{\xi}(a(\xi)\psi(t,x,\xi))\dxi\dx\dt\\
&=\int_{\R^{d+1}}a(Me^{Kt})\psi(t,x,Me^{Kt})\dx \dt +\int_{0}^{\infty}\int_{\R^d}\int_M^{Me^{Kt}} U'(\xi)\partial_t \psi(t,x,\xi) \dxi\dx\dt\\
&=\int_{\R^{d+1}}a(Me^{Kt})\psi(t,x,Me^{Kt})\dx \dt \\
&\quad +\int_{0}^{\infty}\int_{\R^d}\int_0^{t}KMe^{Ks} U'(Me^{Ks})\partial_t \psi(t,x,Me^{Ks})  \ds\dx\dt\\
&=\int_{\R^{d+1}}\left(a(Me^{Kt})-KMe^{Kt} U'(Me^{Kt})\right)\psi(t,x,Me^{Kt}) \dx \dt,
\end{split}
\end{equation*}
where for the last equality we used the Fubini theorem and the integration by parts (note that $\psi$ is compactly supported with respect to $t$).
Thus, we see that for the validity of \eqref{supers}, we need to check that
$$
a(s)\le Ks U'(s) \textrm{ for all } s\in [M,\infty).
$$
Therefore, using the assumption \eqref{Ag_bound}, we see that if
\begin{equation}\label{checkok}
\min(K,M)\ge L,
\end{equation}
where $L$ appears in \eqref{Ag_bound}, then \eqref{supers} holds. Knowing that $\bar{f}$ is a supersolution and denoting $w:=\bar{f}-f_{\lambda}$ we therefore have
$$
U'(\xi)\partial_t w + \bA'(\xi) \cdot \nabla_x w + a(\xi)\partial_{\xi} w \ge \lambda (\bar{f}-\chi(\xi,v_{\lambda}) -\bar{f}+f_{\lambda}).
$$
Again, due to the smoothness of coefficients in the transport terms, this inequality can be renormalized and after a standard procedure we get that for $w_{-}:=\min(0,w)$ and arbitrary nonnegative $\varphi \in \mathcal{D}(\R^{d+1})$
\begin{equation*}
\begin{split}
&\frac{d}{dt}\int_{\R^{d+1}} U'(\xi)|w_{-}| \varphi \dxi \dx  -\int_{\R^{d+1}} |w_{-}|\bA'(\xi) \cdot \nabla_x \varphi +\partial_{\xi}(a(\xi)\varphi)|w_{-}| \dxi \dx \\
&\quad \le \lambda \int_{\R^{d+1}\cap\{(x,\xi):\, \bar{f}\le f_{\lambda}\}} \varphi \left|(\bar{f}-\chi(\xi,v_{\lambda}))_{-}\right| -\varphi |w_{-}| \dxi\dx.
\end{split}
\end{equation*}
Since $f_{\lambda}\in L^1 \cap L^{\infty}$, $\bar{f}$ is nonnegative and  compactly supported with respect to $\xi$ and independent of $x$, and $\bA$ and $a$ are globally Lipschitz,  we can let $\varphi \nearrow 1$ in the above inequality to conclude
\begin{equation}\label{agne}
\begin{split}
&\frac{d}{dt}\int_{\R^{d+1}} U'(\xi)|w_{-}| \dxi \dx  +a'(\xi)|w_{-}|\dxi \dx \\
&\quad \le \lambda \int_{\R^{d+1}} \left|(\bar{f}-\chi(\xi,v_{\lambda}))_{-}\right| - |w_{-}| \dxi\dx.
\end{split}
\end{equation}
Finally, using the definition of $\chi$, we get the following identity
$$
\begin{aligned}
\int_{\R} \left|(\bar{f}-\chi(\xi,v_{\lambda}))_{-}\right|\dxi&=\int_{\R} \left|(\chi(\xi,Me^{Kt})-\chi(\xi,v_{\lambda}))_{-}\right|\dxi \\
&= \left|(Me^{Kt}-v_{\lambda})_{-}\right|=\left|\left(\int_{\R} \bar{f}-f_{\lambda} \dxi \right)_{-}\right|\\
&\le \int_{\R} \left|\left(\bar{f}-f_{\lambda}  \right)_{-}\right|\dxi=\int_{\R} \left|w_{-}\right|\dxi.
\end{aligned}
$$
Thus, substituting it into \eqref{agne} and using the assumption \eqref{Ag_mon}, we deduce
\begin{equation}\label{agne2}
\begin{split}
&\frac{d}{dt}\int_{\R^{d+1}} U'(\xi)|w_{-}| \dxi \dx  \le L\int_{\R^{d+1}}U'(\xi)|w_{-}|\dxi \dx.
\end{split}
\end{equation}
Therefore, using the Gronwall lemma, we see that
$$
\begin{aligned}
M\ge \|v_0\|_{\infty} &\implies \bar{f}(0) \ge f_{\lambda}(0) \implies w_- (0)=0 \implies  w\ge 0 \textrm{ in }(0,T)\times \R^{d+1}\\
&\implies f_{\lambda} \le \chi(\xi,Me^{Kt}).
\end{aligned}
$$
Hence, setting finally $M:=\max (L,\|v_0\|_{\infty})$ and $K:=L$ and using the fact that $f_{\lambda} \ge 0$ for $\xi \ge 0$, we observe
$$
f_{\lambda}(t,\xi,x)=0 \textrm{ if } \xi \ge e^{Lt}\max (L,\|v_0\|_{\infty}).
$$
Since, the procedure above was completely symmetric, we can deduce the same bound also from below and finally get the third part of \eqref{uniffl}.

To complete the proof it remains to show  the last part of \eqref{uniffl}. Having the sign property stated in \eqref{signf} and the second part of \eqref{uniffl}, it is straightforward  to conclude
$$
\begin{aligned}
|v_{\lambda}(t,x)| &= \left| \int_{\R} f(t,x,\xi)\dxi \right| \le \max \left(\int_0^{\infty} f_{\lambda}(t,x,\xi)\dxi, \int_{-\infty}^0 |f(t,x,\xi)|\dxi\right)\\
&=\max \left(\int_0^{e^{Lt}\max (L,\|v_0\|_{\infty})} f_{\lambda}(t,x,\xi)\dxi, \int_{-e^{Lt}\max (L,\|v_0\|_{\infty})}^0 |f(t,x,\xi)|\dxi\right)\\
&\le e^{Lt}\max (L,\|v_0\|_{\infty})
\end{aligned}
$$
and the last part of \eqref{uniffl} easily follows. The proof is complete.
\end{proof}

\section{Proof of Theorem~\ref{TT2}} \label{S4}

Lets us consider the given parametrization $(\bA,a,U)$ fulfilling \eqref{param}. We first focus on the existence proof in case that all of the assumptions \eqref{AF_mon}--\eqref{Ag_bound} are satisfied and assume that the initial data $u_0\in L^1(\R^d)\cap L^{\infty}(\R^d)$. The case of unbounded initial data and/or the case of relaxed assumption on the parametrization will be discussed at the end of this section. We will prove the existence theorem by using the following cascade of approximations. First, we mollify the parametrization and also the initial data, add the strictly monotone term to $a$  and introduce the $\lambda$-kinetic approximation. Based on the a~priori estimates stated in Section~\ref{S3}, we then let $\lambda\to \infty$ to pass to the measure valued solution. Using  the stability inequality we then show then it is in fact the weak entropy solution. Finally, we remove the strict monotone part from the source term and show the solvability of the original pr
 oblem an
 d also its uniqueness.

\subsection{Mollification and kinetic approximation}
We denote by $\rho_r$ the standard convolution kernel of radius $r>0$. Then for given $(U,\bA,a)$ and $u_0$ and arbitrary $\lambda>1$, we define the following mollification
\begin{equation}\label{eq_app_lambda}
\begin{aligned}
u^{\lambda}_0&:= (u_0 \bbbone_{\{|x|\le \lambda\}})*\rho_{\lambda^{-1}}\in \mathcal{D}(\R^d), \\
U^{\lambda}&:=\tilde{U}^{\lambda}*\rho_{\lambda^{-\frac14}}-\tilde{U}^{\lambda}*\rho_{\lambda^{-\frac14}}(0) + \lambda^{-1} s,\\
\bA^{\lambda}&:=(\bA\bbbone_{\{|s|\le \lambda\}})*\rho_{\lambda^{-\frac14}},\\
a^{\lambda}_{\ell,m}&:=(a^{\lambda}+a_{\ell,m})*\rho_{\lambda^{-\frac14}} -  (a^{\lambda}+a_{\ell,m})*\rho_{\lambda^{-\frac14}}(0),\\
v_0^{\lambda}&:= (U^{\lambda})^{-1}(u^{\lambda}_0),
\end{aligned}
\end{equation}
where $a_{\ell,m}$ is an arbitrary continuous bounded strictly decreasing function fulfilling $a_{\ell,m}(0)=0$ and  $a^{\lambda}$ and $\tilde{U}^{\lambda}$ are given
$$
\begin{aligned}
a^{\lambda}(s)&:=\left\{ \begin{aligned}
&a(s) &&\textrm{for } |s|\le \lambda,\\
&(s-\lambda)+a(\lambda)  &&\textrm{for } s>\lambda,\\
&(s+\lambda)+ a(-\lambda) &&\textrm{for }s<-\lambda,
\end{aligned}
\right. \\
 \tilde{U}^{\lambda}(s)&:=\left\{
\begin{aligned}
&U(s) &&\textrm{for } |s|\le \lambda,\\
&(s-\lambda)+U(\lambda)  &&\textrm{for } s>\lambda,\\
&(s+\lambda)+ U(-\lambda) &&\textrm{for }s<-\lambda.
\end{aligned}
\right.
\end{aligned}
$$
With this notation, we introduce the following kinetic approximation
\begin{align}
\left.\begin{aligned}
(U^{\lambda})'(\xi)\partial_t f_{\lambda} + (\bA^{\lambda})'(\xi)\cdot \nabla_x f_{\lambda} + a^{\lambda}_{\ell,m}(\xi)\partial_{\xi} f_{\lambda}&=\lambda (\chi(\xi,v_{\lambda})-f_{\lambda})\textrm{ in }\mathbb{R}_+^{d+2},\\
v_{\lambda}(t,x)&:=\int_{\mathbb{R}} f_{\lambda}(t,x,\xi)\dxi,\\
f_{\lambda}(0,x,\xi)&=\chi(\xi,v^{\lambda}_0(x)).
\end{aligned}\right.
\label{E1aplambda}
\end{align}
Since all assumptions of Lemma~\ref{KAex} are satisfied, we have the existence of a solution to \eqref{E1aplambda}. In addition, due to the assumptions on $U$ and $a$ and the definition of $a^{\lambda}_{\ell,m}$ and $U^{\lambda}$ we see that the mollified $(a^{\lambda}_{\ell,m},U^{\lambda})$ also satisfy  \eqref{Ag_mon} and \eqref{Ag_bound} and therefore we can use \eqref{uniffl} to obtain
\begin{equation}\label{unifflam}
\begin{split}
&\sup_{t\in (0,T)} \int_{\R^{d+1}}(U^{\lambda})'(\xi)|f_{\lambda}(t,x,\xi)|\dxi \dx  \le C(L,T)\int_{\R^{d}}|u^{\lambda}_0(x)|\dx \le C,\\
&\sup_{t\in (0,T)} \|f_{\lambda}(t)\|_{\infty} \le 1,\\
&\supp f_{\lambda} \subset \left\{(t,x,\xi)\in [0,T]\times \R^{d+1}: \, |\xi|\le e^{Lt}C(L,\|u_0\|_{\infty})\right\},\\
&\sup_{t\in (0,T)}\|v_{\lambda}(t)\|_{\infty} \le e^{LT}\max (L,\|v^{\lambda}_0\|_{\infty})\le C(L,T,\|u_0\|_{\infty}).
\end{split}
\end{equation}
Finally, since $f_{\lambda}$ has compact support with respect to $\xi$, we can deduce from the definition of $U^{\lambda}, \bA^{\lambda}, a^{\lambda}_{\ell,m}$, by using \eqref{eq_app_lambda} and \eqref{unifflam} and the integration by parts that for all $\varphi \in \mathcal{C}_0^1((-\infty,T)\times \R^{d}; L^{\infty}(\R))$ and such that $\varphi$ is of class  $\mathcal{C}^1$ on $(0,\infty)$ and $(-\infty,0)$ but its derivative may not exist in zero there holds (note here that for the integration by parts we use the fact that $a^{\lambda}_{\ell,m}(0)=0$)
\begin{equation}\label{nula}
\begin{split}
&\int_0^T\int_{\R^{d+1}}(\chi(\xi,v_{\lambda})-f_{\lambda})\varphi \dxi \dx \dt\\
& \le \frac{1}{\lambda} \int_0^T\int_{\R^{d+1}} \int(U^{\lambda})'(\xi)\partial_t f_{\lambda}\varphi + (\bA^{\lambda})'(\xi)\cdot \nabla_x f_{\lambda}\varphi + a^{\lambda}_{\ell,m}(\xi)\partial_{\xi} f_{\lambda}\varphi \dxi \dx \dt\\
& =-\frac{1}{\lambda}\int_0^T \int_{\R^{d+1}} \int(U^{\lambda})'(\xi) f_{\lambda}\partial_t\varphi + (\bA^{\lambda})'(\xi)  f_{\lambda}\cdot \nabla_x\varphi + a^{\lambda}_{\ell,m}(\xi) f_{\lambda}\partial_{\xi} \varphi\\
&\qquad {} \qquad +(a^{\lambda}_{\ell,m})'(\xi) f_{\lambda} \varphi \dxi \dx \dt + \frac{1}{\lambda} \int_{\R^{d}} (U^{\lambda})'(\xi)\varphi(0,x,\xi)\chi(\xi,v_{\lambda}) \\
&\le  \frac{C(T,L, U,\bA, a,\|u_0\|_{\infty})}{\lambda^{\frac34}} \int_{\supp f_{\lambda}} 1+ |\partial_t\varphi| + |\nabla_x\varphi| + |\partial_{\xi} \varphi|+ |\varphi| \dxi \dx \dt\\
&\le \frac{C(T,L, U,\bA, a,\|u_0\|_{\infty})}{\lambda^{\frac34}} \left(1+\int_{Q_K}\left(\int_{-K}^0 |\nabla \varphi|\dxi \right) +\left(\int_0^{K}|\nabla \varphi|\dxi \right) \dx \dt\right),
\end{split}
\end{equation}
where $K$ is sufficiently large depending on $\|v_0\|_{\infty}$ and $T$, $Q_K$ is a ball of radius $K$ in $\mathbb{R}^{d+1}_+$ and $\nabla \varphi$ indicates the derivatives with respect to $t,x,\xi$. Note here that since we do not assume that $\partial_{\xi}\varphi$ exists on $\R$ (but exists on $\R_+$ and $\R_{-}$) we decomposed the last integral onto the corresponding parts.

\subsection{Limit $\lambda \to \infty$}

In this subsection, we let $\lambda \to \infty$ in \eqref{E1aplambda}. First, we show that the weak limit of $v_{\lambda}$ is an entropy measure valued solution and then with the help of the stability inequality, we show that it is in fact the weak entropy solution.

Due to the a~priori estimates \eqref{E1aplambda}--\eqref{nula} and since $\chi$ is bounded function,  we can extract a non-relabeled subsequence such that
\begin{align}
\chi(\eta,v_{\lambda})-f_{\lambda} &\rightharpoonup^*  0 &&\textrm{ weakly$*$ in }  L^{\infty}((0,T)\times \R^{d+1}),\label{c1}\\
v_{\lambda} &\rightharpoonup^* v &&\textrm{ weakly$^*$ in } L^{\infty}(0,T; L^{\infty}(\R^d)).\label{c2}
\end{align}
Next, we denote by $\nu_{(t,x)}$ the Young measure corresponding to $v$, i.e., we have that $\nu \in L^{\infty}((0,T)\times \R^d; \mathcal{P}_0(\R))$, where $\mathcal{P}_0(\R)$ denotes the space of probability compactly supported measures, and for all $g\in \mathcal{C}(\R)$ we have that
\begin{equation}
g(v_{\lambda}) \rightharpoonup^* \overline{g}= \int_{\R} g(\xi) \mathrm{d}\nu_{(t,x)}(\xi)=:\langle g,\nu_{(t,x)}\rangle, \label{c3}
\end{equation}
where the weak star limit is understood in $L^{\infty}((0,T)\times \R^d)$. Consequently, it follows from \eqref{c1}--\eqref{c2} that for all $g\in \mathcal{C}^1_{loc}(\R)$ we have that
\begin{equation}\label{IDf}
\begin{aligned}
\int_{\R} f_{\lambda}(t,x,\xi)g'(\xi)\dxi &\rightharpoonup^* \lim_{\lambda\to \infty} \int_{\R}\chi(\eta,v_{\lambda}(t,x))g'(\xi)\dxi \\
&= \lim_{\lambda\to \infty} g(v_{\lambda})-g(0)=\langle g(\xi),\nu_{(t,x)}(\xi)\rangle -g(0),
\end{aligned}
\end{equation}
where all limits are understood as weak star limits in $L^{\infty}((0,T)\times \R^d)$. In addition, we can strengthen the relation \eqref{IDf} in the following way. Assume that $\{g^{\lambda}\}_{\lambda}\subset \mathcal{C}^2(-\infty,0)\cup \mathcal{C}^2(0,\infty)$ are such that
\begin{equation}
\begin{aligned}
g^{\lambda} &\to g &&\textrm{strongly in }\mathcal{C}_{loc}(\R),\\
(|(g^{\lambda})'|+|(g^{\lambda})''|)\lambda^{-\frac34} &\to 0 &&\textrm{strongly in } \mathcal{C}_{loc}(-\infty,0]\cap \mathcal{C}_{loc}[0,\infty).
\end{aligned}\label{forseq}
\end{equation}
Then using the estimate \eqref{nula}, the uniform convergence of $g^{\lambda}$ and the relation \eqref{IDf}, we deduce that
$$
\begin{aligned}
&\lim_{\lambda \to \infty} \int_{\R^{d+2}} (g^{\lambda})'(\xi) f_{\lambda}(t,x,\xi) \varphi(t,x) \dxi \dx \dt\\
&=\lim_{\lambda \to \infty} \int_{\R^{d+2}} (g^{\lambda})'(\xi) (f_{\lambda}(t,x,\xi)-\chi(\xi,v_{\lambda}(t,x))) \varphi(t,x) \dxi \dx \dt\\
&\qquad +\lim_{\lambda \to \infty} \int_{\R^{d+2}}(g^{\lambda})'(\xi) \chi(\xi,v_{\lambda}(t,x)) \varphi(t,x) \dxi \dx \dt\\
&\le \lim_{\lambda \to \infty} C(\varphi)\lambda^{-\frac34}(\|g^{\lambda}\|_{\mathcal{C}^2_{loc}(-\infty,0]}+\|g^{\lambda}\|_{\mathcal{C}^2_{loc}[0,\infty)})\\
&\qquad +\lim_{\lambda \to \infty} \int_{\R^{d+1}}(g^{\lambda}(v_{\lambda}(t,x))-g^{\lambda}(0))\varphi(t,x)  \dx \dt\\
&= \int_{\R^{d+1}} \left(\langle g(\xi),\nu_{(t,x)}(\xi)\rangle -g(0)\right) \varphi(t,x)\dx \dt.
\end{aligned}
$$
Consequently, considering also $-\varphi$ we can deduce that in the sense of distributions
\begin{equation}
\int_{\R} (g^{\lambda})'(\xi) f_{\lambda}(t,x,\xi)  \dxi \rightharpoonup  \langle g(\xi),\nu_{(t,x)}(\xi)\rangle -g(0) \qquad \textrm{ in } \mathcal{D}'([0,T)\times \R^d). \label{finalklo}
\end{equation}

With the help of this identification of the limit, we can let $\lambda \to \infty$ in \eqref{E1aplambda} to obtain the measure valued entropy solution. Indeed, let $E\in \mathcal{C}^2(\R)$ be arbitrary convex function such that $E''\in \mathcal{D}(\R)$ and let $\varphi \in \mathcal{D}((-\infty,T)\times \R^{d})$ be arbitrary nonnegative function. If we multiply \eqref{E1aplambda} by $\varphi(t,x)E'(\xi)$ and integrate the result over $(0,T)\times \R^{d+1}$ and use integration by parts we obtain that
\begin{align}
\left.\begin{aligned}
\lambda &\int_0^T \int_{\R^{d+1}}E'(\xi) (\chi(\xi,v_{\lambda})-f_{\lambda})\varphi \dxi \dx \dt \\ &=\int_0^T\int_{\R^{d+1}}\left((U^{\lambda})'(\xi)E'(\xi)\partial_t f_{\lambda} + (\bA^{\lambda})'(\xi)E'(\xi)\cdot \nabla_x f_{\lambda}\right.\\
 &\qquad {} \qquad \left. + a^{\lambda}_{\ell,m}(\xi)E'(\xi)\partial_{\xi} f_{\lambda}\right)\varphi \dxi\dx\dt\\
&=-\int_0^T\int_{\R^{d+1}}(Q_U^{\lambda})'(\xi)f_{\lambda}\partial_t \varphi + (Q_{\bA}^{\lambda})'(\xi)f_{\lambda}\cdot \nabla_x \varphi +(a^{\lambda}_{\ell,m}(\xi)E'(\xi))' f_{\lambda}\varphi\dxi\dx\dt\\
&\qquad +\int_{\R^{d+1}}Q_U^{\lambda}(v_0^{\lambda}(x))\varphi(0,x) \dx,
\end{aligned}\right.
\label{E1aplambda2}
\end{align}
where
$$
\begin{aligned}
Q_U^{\lambda}(s)&:=\int_0^s (U^{\lambda})'E'\dxi = U^{\lambda}(s)E'(s)-\int_0^s U^{\lambda}E''\dxi\\
Q_{\bA}^{\lambda}(s)&:=\int_0^s (\bA^{\lambda})' E'\dxi = (\bA(s)^{\lambda}(s)E'(s)-\bA(s)^{\lambda}(0)E'(0))-\int_0^s \bA^{\lambda}E''\dxi.
\end{aligned}
$$
Consequently, due to the definition of the mollified quantities, we see that $Q_U^{\lambda}$, $Q_{\bA}^{\lambda}$ and $a^{\lambda}_{\ell,m}E'$ can be used as $g^{\lambda}$ in \eqref{forseq} and \eqref{finalklo} together with \eqref{E1aplambda2} implies that
\begin{align}
\left.\begin{aligned}
&\lim_{\lambda\to \infty} \lambda \int_0^T \int_{\R^{d+1}}E'(\xi) (\chi(\xi,v_{\lambda})-f_{\lambda})\varphi \dxi \dx \dt \\
&=-\int_0^T\int_{\R^{d}}\langle (Q_U(\xi),\nu_{(t,x)}(\xi)\rangle \partial_t \varphi(t,x) + \langle Q_{\bA}(\xi), \nu_{(t,x)}(\xi)\rangle \cdot \nabla_x \varphi(t,x) \dx \dt\\
&\qquad -\int_0^T \int_{\R^d} \langle (a(\xi)+a_{\ell,m}(\xi))E'(\xi), \nu_{(t,x)}(\xi)\rangle \varphi(t,x)\dx\dt \\
&\qquad +\int_{\R^{d+1}}Q_U(v_0(x))\varphi(0,x) \dx.
\end{aligned}\right.
\label{E1aplambda3}
\end{align}
We finally, focus on the term on the left hand side. Using the fact that $f_{\lambda}$ has compact support with respect to $\xi$ and $v_{\lambda}$ is bounded, we can use the integration by parts formula to obtain (note that $\int_{-\infty}^{\infty}f_{\lambda}(t,x,s)-\chi(s,v_{\lambda}(t,x))\ds=0$)
$$
\begin{aligned}
&\int_0^T \int_{\R^{d+1}}E'(\xi) (\chi(\xi,v_{\lambda})-f_{\lambda})\varphi \dxi \dx \dt \\
&= -\int_0^T \int_{\R^d} \int_{\R} E'(\xi)\left(\frac{d}{\dxi}\int_{-\infty}^{\xi}f_{\lambda}(t,x,s)-\chi(s,v_{\lambda}(t,x))\ds \right)\varphi(t,x)\dxi\dx\dt\\
&= \int_0^T \int_{\R^d} \int_{\R} E''(\xi)\int_{-\infty}^{\xi}f_{\lambda}(t,x,s)-\chi(s,v_{\lambda}(t,x))\ds \varphi(t,x)\dxi\dx\dt.
\end{aligned}
$$
Then, since  $|f_{\lambda}|\le 1$ and $\sgn f_{\lambda}(s)=\sgn s$ we can deduce, see \cite[Theorem 2.1.1]{Perthame02} for details, that for all $\xi\in \R$
$$
\int_{-\infty}^{\xi}f_{\lambda}(s,x,t)-\chi(s,v_{\lambda}(x,t))\ds\le 0.
$$
Hence it directly follows that for convex  $E$  and nonnegative $\varphi$, the relation \eqref{E1aplambda3} can be rewritten as
\begin{align}
\left.\begin{aligned}
&-\int_0^T\int_{\R^{d}}\langle (Q_U(\xi),\nu_{(t,x)}(\xi)\rangle \partial_t \varphi(t,x) + \langle Q_{\bA}(\xi), \nu_{(t,x)}(\xi)\rangle \cdot \nabla_x \varphi(t,x) \dx \dt\\
&\qquad -\int_0^T \int_{\R^d} \langle (a(\xi)+a_{\ell,m}(\xi))E'(\xi), \nu_{(t,x)}(\xi)\rangle \varphi(t,x)\dx\dt \\
&\qquad +\int_{\R^{d+1}}Q_U(v_0(x))\varphi(0,x) \dx\le 0
\end{aligned}\right.
\label{E1aplambda4}
\end{align}
and consequently $\nu_{(t,x)}$ is an entropy measure valued solution for $(U,\bA, a+a_{\ell,m},v_0)$ with some $v_0 \in L^{\infty}(\R^d)$ satisfying $U(v_0)=u_0$ in $\R^d$.

Our goal now is to use the stability inequality. However, for doing so, we first need to show that $\langle |U|,\nu\rangle \in L^{\infty}(0,T; L^1(\R^d))$. For this purpose assume that $\varphi \in \mathcal{D}((0,T)\times \R^d)$ is an arbitrary nonnegative function. Having in mind that $|(U^{\lambda})''|\le C\lambda^{\frac12}$, we can now use \eqref{finalklo} and the fact that $\sgn \xi = \sgn f_{\lambda}(t,x,\xi)$ to conclude that
$$
\begin{aligned}
&\int_{\R^{d+1}_+} \langle |U(\xi)|,\nu_{(t,x)}(\xi)\rangle \varphi(t,x) \dx \dt=\int_{\R^{d+1}_+} (\langle |U(\xi)|,\nu_{(t,x)}(\xi)\rangle -|U(0)|)\varphi(t,x) \dx \dt \\
&=\lim_{\lambda\to \infty}\int_{\R^{d+1}_+} \int_{\R} (U^{\lambda}(\xi))'(\sgn \xi) f_{\lambda}(t,x,\xi) \varphi(t,x) \dxi \dx\dt\\
&=\lim_{\lambda\to \infty}\int_{\R^{d+1}_+} \int_{\R} (U^{\lambda}(\xi))'|f_{\lambda}(t,x,\xi)| \varphi(t,x) \dxi \dx\dt\\
&\le \limsup_{\lambda\to \infty} \int_0^T \|\varphi(t)\|_{L^{\infty}(\R^{d})} \int_{\R^{d+1}}(U^{\lambda}(\xi))'|f_{\lambda}(t,x,\xi)|  \dxi \dx\dt\\
&\le  C(L,T)\|u_0\|_1\int_0^T \|\varphi(t)\|_{L^{\infty}(\R^{d})} \dt,
\end{aligned}
$$
where for the last inequality we used the a~priori bound \eqref{unifflam}. Due to the density argument, we can now choose $\varphi(t,x):=\bbbone_{\{(\tau,\tau+h)\times B_R(0)\}}$ and by letting $R\to \infty$ we get by using the monotone convergence theorem
$$
\begin{aligned}
&\int_{\tau}^{\tau+h}\int_{\R^{d}} \langle |U(\xi)|,\nu_{(t,x)}(\xi)\rangle \dx \dt\le  C(L,T)\|u_0\|_1 h.
\end{aligned}
$$
Consequently, since almost all times $t$ are Lebesgue points, we finally deduce
\begin{equation}
\essup_{t\in(0,T)} \int_{\R^d} \langle |U(\xi)|,\nu_{(t,x)}(\xi)\rangle \dx \le C(L,T)\|u_0\|_1.\label{apL1}
\end{equation}

At this level, we are prepared to use the first stability inequality \eqref{resultin2}. Since \eqref{resultin2} deals only with positive parts, we can first change the role of $U^1$ and $U^2$ in \eqref{resultin2} to obtain
\begin{equation}
\begin{aligned}
&-\int_0^T \int_{\R^d} \langle(U^2(k)-U^1(\lambda))_+,\nu^{1}_{(t,x)}(\lambda)\otimes\nu^{2}_{(t,x)}(k)\rangle \partial_t\varphi(t,x) \dx \dt\\
&\quad- \int_0^T \int_{\R^d}  \langle (\bA^2(k)-\bA^1(\lambda))\bbbone_{\{\lambda \le \tilde{U}(k)\}},\nu^{1}_{(t,x)}(\lambda)\otimes\nu^{2}_{(t,x)}(k)\rangle \cdot \nabla_x \varphi (t,x)\dx \dt\\
& \le \int_0^T \int_{\R^d} \langle (a^{2}(k)-a^1(\lambda))\bbbone_{\{\lambda \le \tilde{U}(k)\}},\nu^{1}_{(t,x)}(\lambda)\otimes \nu^2_{(t,x)}(k)\rangle \varphi(t,x) \dx\dt\\
&\quad + \int_{\R^d} (U^2(v_0^2(x))-U^1(v_0^1(x)))_+\varphi(0,x) \dx.
\label{resultin2b}
\end{aligned}
\end{equation}
Hence summing \eqref{resultin2} and \eqref{resultin2b} we obtain
\begin{equation}
\begin{aligned}
&-\int_0^T \int_{\R^d} \langle|U^2(k)-U^1(\lambda)|,\nu^{1}_{(t,x)}(\lambda)\otimes\nu^{2}_{(t,x)}(k)\rangle \partial_t\varphi(t,x) \dx \dt\\
&\quad- \int_0^T \int_{\R^d}  \langle (\bA^2(k)-\bA^1(\lambda))\sgn (\tilde{U}(k)-\lambda),\nu^{1}_{(t,x)}(\lambda)\otimes\nu^{2}_{(t,x)}(k)\rangle \cdot \nabla_x \varphi (t,x)\dx \dt\\
& \le \int_0^T \int_{\R^d} \langle (a^{2}(k)-a^1(\lambda))\sgn (\tilde{U}(k)-\lambda),\nu^{1}_{(t,x)}(\lambda)\otimes \nu^2_{(t,x)}(k)\rangle \varphi(t,x) \dx\dt\\
&\quad + \int_{\R^d} |U^2(v_0^2(x))-U^1(v_0^1(x))|\varphi(0,x) \dx.
\label{resultin2c}
\end{aligned}
\end{equation}
Now, we closely follow the method developed in \cite{Sz89a} (see also \cite{BuGwMaSw2011}) but in addition we have to deal with the term $a$. Thus using \eqref{resultin2c} with $\bA^1=\bA^2:=\bA$, $U^1=U^2:=U$, $v_0^1=v_0^2:=v_0$, $a^1=a^2:=a+a_{\ell,m}$, $\tilde{U}(s):=s$ and $\nu^1=\nu^2:=\nu$, we see that all assumptions of Theorem~\ref{KatoT} are satisfied and we can use \eqref{resultin2c} to conclude that with this special choice we have for all nonnegative  $\varphi\in \mathcal{D}(-\infty,T; \mathcal{D}(\R^d))$
\begin{equation}
\begin{aligned}
&-\int_0^T \int_{\R^d} \langle|U(k)-U(\lambda)|,\nu_{(t,x)}(\lambda)\otimes\nu_{(t,x)}(k)\rangle \partial_t\varphi(t,x) \dx \dt\\
&\quad- \int_0^T \int_{\R^d}  \langle (\bA(k)-\bA(\lambda))\sgn (k-\lambda),\nu_{(t,x)}(\lambda)\otimes\nu_{(t,x)}(k)\rangle \cdot \nabla_x \varphi (t,x)\dx \dt\\
& \le \int_0^T \int_{\R^d} \langle (a(k)-a(\lambda))\sgn (k-\lambda),\nu_{(t,x)}(\lambda)\otimes \nu_{(t,x)}(k)\rangle \varphi(t,x) \dx\dt\\
&\quad + \int_0^T \int_{\R^d} \langle (a_{\ell,m}(k)-a_{\ell,m}(\lambda))\sgn (k-\lambda),\nu_{(t,x)}(\lambda)\otimes \nu_{(t,x)}(k)\rangle \varphi(t,x) \dx\dt.
\label{resultin2d}
\end{aligned}
\end{equation}
Next, using the assumption \eqref{Ag_mon} on $a$ (on side Lipschitz continuity) and the fact that $a_{\ell,m}$ is decreasing, we obtain
\begin{equation}
\begin{aligned}
&-\int_0^T \int_{\R^d} \langle|U(k)-U(\lambda)|,\nu_{(t,x)}(\lambda)\otimes\nu_{(t,x)}(k)\rangle \partial_t\varphi(t,x) \dx \dt\\
&\quad + \int_0^T \int_{\R^d} \langle |a_{\ell,m}(k)-a_{\ell,m}(\lambda)|,\nu_{(t,x)}(\lambda)\otimes \nu_{(t,x)}(k)\rangle \varphi(t,x) \dx\dt\\
& \le L\int_0^T \int_{\R^d} \langle |U(k)-U(\lambda)|,\nu_{(t,x)}(\lambda)\otimes \nu_{(t,x)}(k)\rangle \varphi(t,x) \dx\dt\\
&\quad + \int_0^T \int_{\R^d}  \langle |\bA(k)-\bA(\lambda)|,\nu_{(t,x)}(\lambda)\otimes\nu_{(t,x)}(k)\rangle |\nabla_x \varphi (t,x)|\dx \dt.
\label{resultin2e}
\end{aligned}
\end{equation}
Finally, we choose a special function $\varphi$. Note that due to the weak star density we can consider Lipschitz functions compactly supported in $(-\infty,T)\times \R^d$. Hence, for arbitrary $R>0$ we find smooth nonnegative $\psi_R\in \mathcal{D}(\R^d)$ such that $\psi_R(x)=1$ if $|x|\le R$, $\psi_R(x)=0$ if $|x|>2R$ and $R|\nabla \psi(x)|\le C(d)$. Then for arbitrary $\tau \in (0,T)$ we set $\varphi(t,x):=(\tau - t)_+ \psi(x)$ in \eqref{resultin2e}. Doing so and using the assumption on $\bA$ and the fact that $\nu$ is a probability measure, we obtain
\begin{equation*}
\begin{aligned}
&\int_0^{\tau} \int_{\R^d} \langle|U(k)-U(\lambda)|,\nu_{(t,x)}(\lambda)\otimes\nu_{(t,x)}(k)\rangle \psi_R(x) \dx \dt\\
&\quad + \int_0^{\tau} \int_{\R^d} \langle |a_{\ell,m}(k)-a_{\ell,m}(\lambda)|,\nu_{(t,x)}(\lambda)\otimes \nu_{(t,x)}(k)\rangle (\tau-t)_+\psi_R(x) \dx\dt\\
& \le L\int_0^{\tau} \int_{\R^d} \langle |U(k)-U(\lambda)|,\nu_{(t,x)}(\lambda)\otimes \nu_{(t,x)}(k)\rangle (\tau-t)_+\psi_R(x) \dx\dt\\
& + \frac{C(d)T}{R}\int_0^T \int_{B_{2R}\setminus B_{R}}  \!\!\!\!\!\!\!\!\langle |\bA(k)-\bA(0)|+|\bA(\lambda)-\bA(0)|,\nu_{(t,x)}(\lambda)\otimes\nu_{(t,x)}(k)\rangle \dx \dt\\
& = L\int_0^{\tau} \int_{\R^d} \langle |U(k)-U(\lambda)|,\nu_{(t,x)}(\lambda)\otimes \nu_{(t,x)}(k)\rangle (\tau-t)_+\psi_R(x) \dx\dt\\
& + \frac{2C(d)T}{R}\int_0^T \int_{B_{2R}\setminus B_{R}}  \langle |\bA(k)-\bA(0)|,\nu_{(t,x)}(k)\rangle \dx \dt\\
& =: L\int_0^{\tau} \int_{\R^d} \langle |U(k)-U(\lambda)|,\nu_{(t,x)}(\lambda)\otimes \nu_{(t,x)}(k)\rangle (\tau-t)_+\psi_R(x) \dx\dt + I(R).
\end{aligned}
\end{equation*}
Noticing that
$$
\begin{aligned}
&\int_0^{\tau} \int_{\R^d} \langle|U(k)-U(\lambda)|,\nu_{(t,x)}(\lambda)\otimes\nu_{(t,x)}(k)\rangle \psi_R(x) \dx \dt\\
&\quad =\frac{d}{d\tau}\int_0^{\tau} \int_{\R^d} \langle|U(k)-U(\lambda)|,\nu_{(t,x)}(\lambda)\otimes\nu_{(t,x)}(k)\rangle \psi_R(x)(\tau-t) \dx \dt
\end{aligned}
$$
and multiplying the above inequality by $e^{-L\tau}$ we see that it reduces to
\begin{equation*}
\begin{aligned}
&\frac{d}{d\tau}\left(e^{-L\tau}\int_0^{\tau} \int_{\R^d} \langle|U(k)-U(\lambda)|,\nu_{(t,x)}(\lambda)\otimes\nu_{(t,x)}(k)\rangle \psi_R(x)(\tau-t) \dx \dt\right)\\
&\quad + e^{-L\tau}\int_0^{T} \int_{\R^d} \langle |a_{\ell,m}(k)-a_{\ell,m}(\lambda)|,\nu_{(t,x)}(\lambda)\otimes \nu_{(t,x)}(k)\rangle (\tau-t)_+\psi_R(x) \dx\dt\\
& \le I(R).
\end{aligned}
\end{equation*}
Integrating the result over $\tau\in (0,T)$ we finally get that
\begin{equation*}
\begin{aligned}
&\int_0^{T} \int_{\R^d} \langle|U(k)-U(\lambda)|,\nu_{(t,x)}(\lambda)\otimes\nu_{(t,x)}(k)\rangle \psi_R(x)(T-t) \dx \dt\\
& +\int_0^T\int_0^{T} \int_{\R^d} \langle |a_{\ell,m}(k)-a_{\ell,m}(\lambda)|,\nu_{(t,x)}(\lambda)\otimes \nu_{(t,x)}(k)\rangle (\tau-t)_+\psi_R(x) \dx\dt\, \mathrm{d}\tau\\
& \le I(R)C(L,T).
\end{aligned}
\end{equation*}
Finally, we let $R\to \infty$ and consequently $\psi_R\nearrow 1$. Using the monotone convergence theorem, we therefore obtain that
\begin{equation}\label{almf}
\begin{aligned}
&\int_0^{T} \int_{\R^d} \langle|U(k)-U(\lambda)|,\nu_{(t,x)}(\lambda)\otimes\nu_{(t,x)}(k)\rangle(T-t) \dx \dt\\
& +\int_0^T\int_0^{T} \int_{\R^d} \langle |a_{\ell,m}(k)-a_{\ell,m}(\lambda)|,\nu_{(t,x)}(\lambda)\otimes \nu_{(t,x)}(k)\rangle (\tau-t)_+ \dx\dt\, \mathrm{d}\tau\\
& \le C(L,T) \limsup_{R\to \infty}I(R).
\end{aligned}
\end{equation}
We need to estimate the last term. Using the definition of $I_R$, the assumption \eqref{AF_mon},  the Jensen and the H\"{o}lder inequality, we observe that
$$
\begin{aligned}
I(R)&=\frac{2C(d)T}{R}\int_0^T \int_{B_{2R}\setminus B_{R}}  \langle |\bA(k)-\bA(0)|,\nu_{(t,x)}(k)\rangle \dx \dt\\
&\le \frac{2C(d)TL}{R}\int_0^T \int_{B_{2R}\setminus B_{R}}  \langle |U(k)|+|U(k)|^{\alpha},\nu_{(t,x)}(k)\rangle \dx \dt\\
&\le \frac{2C(d)TL}{R}\int_0^T \int_{B_{2R}\setminus B_{R}}  \langle |U(k)|,\nu_{(t,x)}(k)\rangle \dx \dt\\
&+ \frac{2C(d)TL}{R}\int_0^T |B_{2R}\setminus B_{R}|^{1-\alpha}\left(\int_{B_{2R}\setminus B_{R}}  |\langle |U(k)|^{\alpha},\nu_{(t,x)}(k)\rangle|^{\frac{1}{\alpha}} \dx\right)^{\alpha} \dt\\
&\le \frac{2C(d)TL}{R}\int_0^T \int_{\R^d}  \langle |U(k)|,\nu_{(t,x)}(k)\rangle \dx \dt\\
&+ \frac{2C(d)T^{2-\alpha}L}{R^{d(\alpha-\frac{1}{d'})}}\left(\int_0^T \int_{\R^d\setminus B_{R}}  \langle |U(k)|,\nu_{(t,x)}(k)\rangle \dx \dt\right)^{\alpha}.
\end{aligned}
$$
Due to the  a~priori estimate \eqref{apL1}, we have that
$$
\lim_{R\to \infty}\int_0^T \int_{\R^d\setminus B_{R}}  \langle |U(k)|,\nu_{(t,x)}(k)\rangle \dx \dt=0,
$$
and consequently, using the assumption that $\alpha \ge \frac{1}{d'}$ we obtain from the above computation that
$$
I(R)\to 0 \textrm{ as } R\to \infty.
$$
Going back to \eqref{almf}, it implies
\begin{equation*}
\begin{aligned}
&\int_0^{T} \int_{\R^d} \langle|U(k)-U(\lambda)|,\nu_{(t,x)}(\lambda)\otimes\nu_{(t,x)}(k)\rangle(T-t) \dx \dt\\
& +\int_0^T\int_0^{T} \int_{\R^d} \langle |a_{\ell,m}(k)-a_{\ell,m}(\lambda)|,\nu_{(t,x)}(\lambda)\otimes \nu_{(t,x)}(k)\rangle (\tau-t)_+ \dx\dt\, \mathrm{d}\tau =0,
\end{aligned}
\end{equation*}
which implies that for almost all $(t,x)\in (0,T)\times \R^d$ there holds
\begin{equation}
\begin{split}
\langle |a_{\ell,m}(k)-a_{\ell,m}(\lambda)|,\nu_{(t,x)}(\lambda)\otimes \nu_{(t,x)}(k)\rangle &=0,\\ \langle|U(k)-U(\lambda)|,\nu_{(t,x)}(\lambda)\otimes\nu_{(t,x)}(k)\rangle&=0. \label{Szep1}
\end{split}
\end{equation}
Finally, we show that \eqref{Szep1} implies that $\nu_{(t,x)}=\delta_{v(t,x)}$. Indeed, assume that $\lambda_1\neq \lambda_2$ belong to the support of $\nu_{(t,x)}$. Then due to the continuity of $a_{\ell,m}$ and its strict monotonicity, it follows that there exists a constant $C$ such that for all $\lambda$ and $k$ fulfilling
\begin{equation}\label{spi1}
|k-\lambda_2|+|\lambda - \lambda_1|\le \frac{|\lambda_1-\lambda_2|}{4},
\end{equation}
we have
\begin{equation}\label{spi2}
C|a_{\ell,m}(k)-a_{\ell,m}(\lambda)|\ge 1.
\end{equation}
Finally, since $\lambda_{1,2}$ are assumed to be in the support of $\nu_{(t,x)}$, we can find a nonnegative $\psi_1\in \mathcal{C}_0(B_{\frac{|\lambda_1-\lambda_2|}{8}}(\lambda_1))$ and $\psi_2\in \mathcal{C}_0(B_{\frac{|\lambda_1-\lambda_2|}{8}}(\lambda_2))$ such that $\|\psi_1\|_{\infty}\le 1$ and $\|\psi_2\|_{\infty}\le 1$ fulfilling
$$
0< \langle \psi_1, \nu_{(t,x)}\rangle, \qquad 0< \langle \psi_2, \nu_{(t,x)}\rangle.
$$
Therefore, using \eqref{spi1}--\eqref{spi2}, we can deduce
$$
\begin{aligned}
0&<\int_{\R^2} \psi_1(\lambda)\psi_2(k) \,\mathrm{d} \nu_{(t,x)}(\lambda)\,\mathrm{d} \nu_{(t,x)}(k)=\langle \psi_1(\lambda)\psi_2(k),  \nu_{(t,x)}(\lambda) \otimes \nu_{(t,x)}(k) \rangle \\
&\le C\langle \psi_1(\lambda)\psi_2(k)|a_{\ell,m}(k)-a_{\ell,m}(\lambda)|,  \nu_{(t,x)}(\lambda) \otimes \nu_{(t,x)}(k) \rangle \\
&\le C\langle |a_{\ell,m}(k)-a_{\ell,m}(\lambda)|,  \nu_{(t,x)}(\lambda) \otimes \nu_{(t,x)}(k) \rangle=0,
\end{aligned}
$$
which is a contradiction. Therefore, $\nu_{(t,x)}$ is supported in a single point and since it is also the Young measure corresponding to $v$  there holds
$$
\nu_{(t,x)}=\delta_{v(t,x)}
$$
and we see that $v$ or respectively $u:=U(v)$ is an entropy weak solution.

\subsection{Limit $\ell,m \to \infty$}
In the previous step we used the auxiliary function $a_{\ell,m}$ to identify the limiting Young measure. Now we want to remove this function, or more precisely we want to let $a_{\ell,m}\to 0$. To do so, we follow \cite{GwSwWiZi2014} and introduce for any $\ell,m\in \mathbb{N}$ the function
$$
a_{\ell,m}(s):=\frac{1}{\ell}\arctan(|s|)\bbbone_{\{s\le 0\}}-\frac{1}{m}\arctan(s)\bbbone_{\{s\ge 0\}}.
$$
Note that for any $\ell,m$ the function $a_{\ell,m}$ is bounded strictly decreasing and we can therefore use the previous subsection to get the existence of entropy weak solution  $v^{\ell,m}\in L^{\infty}(0,T; L^{\infty}(\R^d))$ (in sense of Definition~\ref{D1} for $(\bA,U,a+a_{\ell,m},u_0)$ fulfilling in addition
\begin{equation}
\begin{split}
\essup_{t\in (0,T)} \|v^{\ell,m}(t)\|_{\infty}&\le C(T,U)\|u_0\|_{\infty},\\
\essup_{t\in (0,T)} \|u^{\ell,m}\|_1=\essup_{t\in (0,T)} \|U(v^{\ell,m})\|_1 &\le C(T,U)\|u_0\|_{1}.\label{lmapr}
\end{split}
\end{equation}
Moreover, this choice of $a_{\ell,m}$ implies that
$$
\begin{aligned}
a_{\ell,m} &\le a_{\ell,n} &&\textrm{for all } \ell,m,n\in \mathbb{N} \textrm{ such that }n\ge m,\\
a_{\ell,m} &\le a_{k,m} &&\textrm{for all } \ell,m,k\in \mathbb{N} \textrm{ such that }\ell\ge k.
\end{aligned}
$$
Let us now consider two weak entropy solution $v^{\ell,m},v^{\ell,n}$ corresponding to $(\bA,U,a)$, the initial data $v_0$ and $a_{\ell,m}$, $a_{\ell,n}$ respectively with arbitrary $\ell$ and $n\ge m$. Defining $a^1:=a+a_{\ell,m}$ and $a^2:=a+a_{\ell,n}$, we see that $a^1\le a^2$. Thus, since $v^{\ell,m}$ and $v^{\ell,n}$ are weak entropy solutions, then $\nu^1_{(t,x)}:=\delta_{v^{\ell,m}(t,x)}$ and $\nu^2_{(t,x)}:=\delta_{v^{\ell,n}(t,x)}$ are corresponding measure valued solutions and we can use \eqref{resultin1} to deduce that
\begin{equation*}
\begin{aligned}
&-\int_0^T\int_{\R^d}\langle(U(\lambda)-U(k))_+,\nu^{1}_{(t,x)}(\lambda)\otimes\nu^{2}_{(t,x)}(k)\rangle  \partial_t \varphi(t,x) \dx \dt \\
&\quad -\int_0^T\int_{\R^d}\langle (\bA(\lambda)-\bA(k))\bbbone_{\{\lambda \ge k\}},\nu^{1}_{(t,x)}(\lambda)\otimes\nu^{2}_{(t,x)}(k)\rangle \cdot \nabla_x \varphi(t,x)  \dx \dt\\
& \le \int_0^T\int_{\R^d}\langle ((a^2(\lambda)-a^{2}(k))\bbbone_{\{\lambda \ge k\}} ,\nu^{1}_{(t,x)}(\lambda)\otimes \nu^{2}_{(t,x)}(k)\rangle \varphi(t,x)  \dx \dt\\
&\quad +\int_{\R^d}\langle(U(v_0^1(x))-U(v_0^2(x)))_+\varphi(0,x)  \dx.
\end{aligned}
\end{equation*}
Due to the fact that $v_0^1=v_0^2=v_0$ and since both measures are Young measures, we have that
\begin{equation*}
\begin{aligned}
&-\int_0^T\int_{\R^d}(u^{\ell,m}-u^{\ell,n})_+  \partial_t \varphi \dx \dt \\
&\quad -\int_0^T\int_{\R^d} (\bA(v^{\ell,m})-\bA(v^{\ell,n}))\bbbone_{\{v^{\ell,m} \ge v^{\ell,n}\}} \cdot \nabla_x \varphi  \dx \dt\\
& \le \int_0^T\int_{\R^d} ((a^2(v^{\ell,m})-a^{2}(v^{\ell,n}))\bbbone_{\{v^{\ell,m} \ge v^{\ell,n}\}} \varphi  \dx \dt.
\end{aligned}
\end{equation*}
Hence, using finally the assumptions \eqref{AF_mon} and \eqref{Ag_mon} and the fact that $a^{\ell,n}$ is decreasing, we get
\begin{equation*}
\begin{aligned}
&-\int_0^T\int_{\R^d}(u^{\ell,m}-u^{\ell,n})_+  \partial_t \varphi \dx \dt \\
&\int_0^T\int_{\R^d}(a^{\ell,n}(v^{\ell,n})-a^{\ell,n}(v^{\ell,m}))_{+} \varphi  \dx \dt\\
& \le L\int_0^T\int_{\R^d}(u^{\ell,m}-u^{\ell,n})_+ \varphi  \dx \dt\\
&\quad +\int_0^T\int_{\R^d}(|u^{\ell,m}|+|u^{\ell,m}|^{\alpha} + |u^{\ell,n}|+|u^{\ell,n}|^{\alpha})|\nabla_x \varphi|  \dx \dt.
\end{aligned}
\end{equation*}
Hence, following the previous subsection we set $\varphi(t,x):=(\tau-t)_+\psi_R$ and letting $R\to \infty$ (note that due to the estimate \eqref{lmapr} the last term vanishes) we observe
\begin{equation}
\begin{aligned}
&\int_0^{\tau}\int_{\R^d}(u^{\ell,m}-u^{\ell,n})_+  \dx \dt \\
&\qquad +\int_0^{\tau}\int_{\R^d}(a^{\ell,n}(v^{\ell,n})-a^{\ell,n}(v^{\ell,m}))_{+} (\tau-t)\dx \dt\\
& \le L\int_0^{\tau}\int_{\R^d}(u^{\ell,m}-u^{\ell,n})_+ (\tau-t)  \dx \dt.\label{resultinlm}
\end{aligned}
\end{equation}
Consequently, following the same procedure as in the previous subsection, we deduce from \eqref{resultinlm} that
$$
(u^{\ell,m}-u^{\ell,n})_+ =(a^{\ell,n}(v^{\ell,n})-a^{\ell,n}(v^{\ell,m}))_{+}=0 \textrm{ a.e. in } (0,T)\times \R^d.
$$
Thus, using the fact that $a^{\ell,n}$ is strictly decreasing, we obtain
\begin{equation}\label{m11}
v^{\ell,m}\le v^{\ell,n} \qquad \textrm{for } n\ge m.
\end{equation}
In the same manner one can show that
\begin{equation}\label{m12}
v^{k,m}\le v^{\ell,m} \qquad \textrm{for }k>\ell.
 \end{equation}

Having such monotonicity relations, we  follow \cite{GwSwWiZi2014}. First, we let $n\to \infty$ and then $\ell \to \infty$. Due to the relation \eqref{m11}, we see that for any  fixed $\ell$, there exists a~measurable $v^{\ell}:\Rdp \to \R$ such that
\begin{equation}
v^{\ell,m}\nearrow v^\ell\qquad \textrm{almost everywhere in }\Rdp.\label{m13}
\end{equation}
Moreover, using the a~priori bound \eqref{lmapr} we have that
\begin{equation}
\begin{split}
\essup_{t\in (0,T)} \|v^{\ell}(t)\|_{\infty}&\le C(T,U)\|u_0\|_{\infty},\\
\essup_{t\in (0,T)} \|u^{\ell}\|_1=\essup_{t\in (0,T)} \|U(v^{\ell})\|_1 &\le C(T,U)\|u_0\|_{1}.\label{lmapr1}
\end{split}
\end{equation}
Similarly, it follows from \eqref{m12} and \eqref{m13} that
\begin{equation}\label{m14}
v^{k}\le v^{\ell} \qquad \textrm{for }k>\ell.
\end{equation}
Thus, we again have a monotone sequence and therefore there exists a~measurable $v:\Rdp \to \R$ such that
\begin{equation}
v^{\ell}\searrow v\qquad \textrm{almost everywhere in }\Rdp.\label{m15}
\end{equation}
Then, it follows from \eqref{lmapr1} that
\begin{equation}
\begin{split}
\essup_{t\in (0,T)} \|v(t)\|_{\infty}&\le C(T,U)\|u_0\|_{\infty},\\
\essup_{t\in (0,T)} \|u\|_1=\essup_{t\in (0,T)} \|U(v)\|_1 &\le C(T,U)\|u_0\|_{1}.\label{lmapr34}
\end{split}
\end{equation}
Due to the point-wise  convergence results \eqref{m13} and \eqref{m15}, it is then easy to complete the limit passage and having the a~priori estimates \eqref{lmapr34}, the function $u$ is the entropy weak solution.

\subsection{Uniqueness}
We focus here on the uniqueness of the solution and its independence on the choice of parametrization. Thus, let $(U^1,\bA^1,a^1)$ and $(U^2,\bA^2,a^2)$ be two equivalent parametrizations fulfilling \eqref{eqpa} and consider two corresponding entropy weak solutions $u^1=U^1(v^1)$ and $u^2=U^2(v^2)$ with some initial data $v_0^1$, $v_0^2$ fulfilling $U^1(v_0^1)=U^2(v^2_0)$ almost everywhere  in  $\R^d$. Since, we have a weak solution, it is evident that $\nu^1_{(t,x)}:=\delta_{v^1(t,x)}$ and $\nu^2_{(t,x)}:=\delta_{v^2(t,x)}$ are two entropy measure valued solutions and therefore we can use the stability inequality \eqref{resultin2} to get (note that the term with initial data vanishes)
\begin{equation*}
\begin{aligned}
&-\int_0^T \int_{\R^d} \langle(U^1(\lambda)-U^2(k))_+,\nu^{1}_{(t,x)}(\lambda)\otimes\nu^{2}_{(t,x)}(k)\rangle \partial_t\varphi(t,x) \dx \dt\\
&\quad- \int_0^T \int_{\R^d}  \langle (\bA^1(\lambda)-\bA^2(k))\bbbone_{\{\lambda \ge \tilde{U}(k)\}},\nu^{1}_{(t,x)}(\lambda)\otimes\nu^{2}_{(t,x)}(k)\rangle \cdot \nabla_x \varphi (t,x)\dx \dt\\
& \le \int_0^T \int_{\R^d} \langle (a^{1}(\lambda)-a^2(k))\bbbone_{\{\lambda \ge \tilde{U}(k)\}},\nu^{1}_{(t,x)}(\lambda)\otimes \nu^2_{(t,x)}(k)\rangle \varphi(t,x) \dx\dt.
\end{aligned}
\end{equation*}
Since we can change the role of both solutions (see the previous subsection) and using the fact that the Young measures are Dirac measures, we get
\begin{equation*}
\begin{aligned}
&-\int_0^T \int_{\R^d} |u^1(t,x)-u^2(t,x)| \partial_t\varphi(t,x) \dx \dt\\
&\;- \int_0^T \int_{\R^d}   (\bA^1(v^1(t,x))-\bA^2(v^2(t,x)))\sgn (v^1(t,x) - \tilde{U}(v^2(t,x))) \cdot \nabla_x \varphi (t,x)\dx \dt\\
& \le \int_0^T \int_{\R^d}  (a^{1}(v^1(t,x))-a^2(v^2(t,x)))\sgn (v^1(t,x) - \tilde{U}(v^2(t,x))) \varphi(t,x) \dx\dt,
\end{aligned}
\end{equation*}
which after using \eqref{eqpa} leads to the estimate
\begin{equation*}
\begin{aligned}
&-\int_0^T \int_{\R^d} |u^1(t,x)-u^2(t,x)| \partial_t\varphi(t,x) \dx \dt\\
&\le \int_0^T \int_{\R^d}   |\bA^1(v^1(t,x))-\bA^2(v^2(t,x))||\nabla_x \varphi (t,x)|\dx \dt\\
& + \int_0^T \int_{\R^d}  (a^{1}(v^1(t,x))-a^1(\tilde{U}(v^2(t,x)))\sgn (v^1(t,x) - \tilde{U}(v^2(t,x))) \varphi(t,x) \dx\dt.
\end{aligned}
\end{equation*}
Using finally the assumption \eqref{strictnm} (in case we do not assume the strict monotonicity relation but only \eqref{Ag_mon} we simply set  $g(s):=0$ in what follows)
\begin{equation}
\begin{aligned}
&-\int_0^T \int_{\R^d} |u^1(t,x)-u^2(t,x)| \partial_t\varphi(t,x) \dx \dt\\
&+\int_0^T \int_{\R^d}  g(v^1(t,x)-\tilde{U}(v^2(t,x))) \varphi(t,x) \dx\dt\\
&\le \int_0^T \int_{\R^d}   |\bA^1(v^1(t,x))-\bA^2(v^2(t,x))||\nabla_x \varphi (t,x)|\dx \dt\\
& + L\int_0^T \int_{\R^d} ( U^1(v^1(t,x))-U^1(\tilde{U}(v^2(t,x)))) \varphi(t,x) \dx\dt\\
&\le \int_0^T \int_{\R^d}   |\bA^1(v^1(t,x))-\bA^2(v^2(t,x))||\nabla_x \varphi (t,x)|\dx \dt\\
& + L\int_0^T \int_{\R^d} |u^1(t,x)-u^2(t,x)| \varphi(t,x) \dx\dt.
\label{resultin2un}
\end{aligned}
\end{equation}
Since, $g$ is nonnegative we can directly follow the procedure from the previous section to conclude that $u^1=u^2$ almost everywhere in $\Rdp$. In addition, it follows from \eqref{resultin2un} that $g(v^1-\tilde{U}(v^2))=0$ almost everywhere and consequently using the assumption on the strict positivity of $g$ everywhere except zero, we deduce that $v^1=\tilde{U}(v^2)$ almost everywhere in $\Rdp$.

\subsection{Continuity with respect to $t$}

In this final subsection, we show that the unique entropy weak solution $u$ belongs to the space $\mathcal{C}([0,T]; L^1(\R^d))$. Since $u$ is also a bounded distributional solution then we directly obtain that for any $p\in [1,\infty)$
\begin{equation}\label{weakcont}
u\in \mathcal{C}_{weak}(0,T; L^p_{loc}(\R^d)),
\end{equation}
i.e., $u$ is weakly continuous and it makes sense to define it for all times $t\in [0,T]$. Our goal is to strengthen \eqref{weakcont} and to show that it is in fact strongly continuous. For this purpose, we again use \eqref{resultin2}. Indeed, setting $\nu^1_{(t,x)}:=\delta_{v(t+h,x)}$ and $\nu^2_{(t,x)}:=\delta_{v(t,x)}$, which are two entropy measure valued solutions in $(0,T-h)\times \R^d$ and $(0,T)\times \R^d$. Therefore, we can in the very similar manner as before deduce that for all nonnegative $\varphi \in \mathcal{D}((0,T-h)\times \R^d)$ there holds
\begin{equation}
\begin{aligned}
&-\int_0^{T-h} \int_{\R^d} |u(t+h,x)-u(t,x)| \partial_t\varphi(t,x) \dx \dt\\
&\le \int_0^{T-h} \int_{\R^d}   |\bA(v(t+h,x))-\bA(v(t,x))||\nabla_x \varphi (t,x)|\dx \dt\\
& + L\int_0^{T-h} \int_{\R^d} |u(t+h,x)-u(t,x)| \varphi(t,x) \dx\dt.
\label{resultin2con}
\end{aligned}
\end{equation}
The reason why we have to consider the compactly supported functions in $(0,T-h)$ is that up to now we do not know that the value $u(h)$ is attained continuously. Nevertheless, by the density argument, we can set $\varphi(t,x):=\psi(x)\phi(t)$ in \eqref{resultin2con}, where $\psi \in \mathcal{D}(\R^d)$ is a nonnegative function and $\phi(t)$ is chosen such that
$$
\phi(t):=\left\{\begin{aligned}
&\frac{t}{t_1} &&\textrm{for } t\in [0,t_1],\\
&1 &&\textrm{for } t\in (t_1,t_2),\\
&1-\frac{t-t_2}{\tau} &&\textrm{for } t\in[t_2,t_2+\tau],\\
&0 &&\textrm{for } t\in (t_2+\tau,T].
\end{aligned} \right.
$$
Doing so, we obtain
\begin{equation}
\begin{aligned}
&\frac{1}{\tau}\int_{t_2}^{t_2+\tau}\int_{\R^d} |u(t+h,x)-u(t,x)| \psi(x) \dx \dt\\
&\le \int_0^{T-h} \int_{\R^d}   |\bA(v(t+h,x))-\bA(v(t,x))||\nabla_x \psi (x)|\dx \dt\\
& + L\int_0^{T-h} \int_{\R^d} |u(t+h,x)-u(t,x)| \psi(x) \dx\dt\\
&+\frac{1}{t_1}\int_{0}^{t_1}\int_{\R^d} |u(t+h,x)-u(t,x)| \psi(x) \dx \dt.
\label{resultin2con12}
\end{aligned}
\end{equation}
Using the weak continuity \eqref{weakcont} and weak lower semicontinuity we can let $\tau\to 0_+$ in \eqref{resultin2con12} to obtain
\begin{equation}
\begin{aligned}
&\int_{\R^d} |u(t_2+h,x)-u(t_2,x)| \psi(x) \dx \\
&\le \int_0^{T-h} \int_{\R^d}   |\bA(v(t+h,x))-\bA(v(t,x))||\nabla_x \psi (x)|\dx \dt\\
& + L\int_0^{T-h} \int_{\R^d} |u(t+h,x)-u(t,x)| \psi(x) \dx\dt\\
&+\frac{1}{t_1}\int_{0}^{t_1}\int_{\R^d} |u(t+h,x)-u(t,x)| \psi(x) \dx \dt.
\label{resultin2con13}
\end{aligned}
\end{equation}
Letting now $\psi \nearrow 1$ (note here that the first term on the right hand side vanishes, while for the limiting procedure we can use the monotone convergence theorem to get
\begin{equation}
\begin{aligned}
&\|u(t_2+h)-u(t_2)\|_1\le C(L, t_1)\int_0^{T-h} \|u(t+h)-u(t)\|_1\dt.
\label{contend}
\end{aligned}
\end{equation}
Thus, we see that $u\in \mathcal{C}_{loc}(0,T; L^1(\R^d))$. To prove the continuity up to the initial time $t=0$, we show that
\begin{equation}\label{stronginit}
\lim_{t\to 0_+} \|u(t)-u_0\|_{1}=0.
\end{equation}
Repeating almost step by step the proof of \eqref{indatameasure}, we can deduce that for any compact set $K\subset \R^d$ there holds
\begin{equation}\label{strongi1}
\lim_{t\to 0_+} \|u(t)-u_0\|_{L^1(K)}.
\end{equation}
Thus, it remains to show a uniform decay of the solution at infinity. Since we already have continuity of the solution with respect to time, we proceed more formally. Using the entropy inequality with the entropy $E(s):=|s|$ we have that
$$
\partial_t |u| +\diver \left ((\bA(v)-\bA(0))\sgn v\right) \le a(v)\sgn v \le L|u|,
$$
where for the second inequality, we used \eqref{Ag_mon}. Testing this equation by a nonnegative $\psi \in \mathcal{D}(\R^d)$, we have that
$$
\begin{aligned}
&\frac{d}{dt} \int_{\R^d} |u(t,x)|\psi(x)\dx \le L\int_{\R^d} |u(t,x)|\psi(x)\dx +\int_{\R^d}|\bA(v(t,x))-\bA(0)||\nabla_x \psi(x)|\dx\\
&\le L\int_{\R^d} |u(t,x)|\psi(x)\dx +\int_{\R^d}(|u(t,x)|+ |u(t,x)|^{\alpha})|\nabla_x \psi(x)|\dx,
\end{aligned}
$$
where for the second inequality we used the assumption \eqref{AF_mon}. Consequently, using the Gronwall inequality we deduce
$$
\begin{aligned}
\int_{\R^d} |u(t,x)|\psi(x)\dx \le &e^{Lt}\int_{\R^d} |u_0(x)|\psi(x)\dx \\
&+e^{LT}\int_0^T\int_{\R^d}(|u(s,x)|+ |u(s,x)|^{\alpha})|\nabla_x \psi(x)|\dx\ds.
\end{aligned}
$$
Finally, we can let $\psi\nearrow \psi_R$, where $\psi_R=1$ in $\R^d\setminus B_{2R}(0)$ and $\psi_R=0$ in $B_{R}(0)$ such that $|\nabla \psi_R|\le C R^{-1}$ to conclude
\begin{equation}\label{konec}
\begin{aligned}
&\int_{\R^d\setminus B_{2R}(0)} |u(t,x)|\dx \le e^{Lt}\int_{\R^d\setminus B_{R}(0)} |u_0(x)|\dx \\
&\qquad +Ce^{LT}(R^{-1} + R^{\frac{d}{d'}-d\alpha}\left(\int_0^T\int_{\R^d \setminus B_{R}(0)}|u(s,x)|\dx\ds \right)^{\frac{1}{\alpha}}.
\end{aligned}
\end{equation}
Thus, finally, to obtain \eqref{stronginit}, we have
\begin{equation*}
\begin{split}
&\lim_{t\to 0_+} \|u(t)-u_0\|_{1}=\lim_{t\to 0_+} (\|u(t)-u_0\|_{L^1(B_{R}(0))}+\|u(t)-u_0\|_{L^1(\R^d\setminus B_{R}(0))}\\
& \overset{\eqref{strongi1}}=\lim_{t\to 0_+}\|u(t)-u_0\|_{L^1(\R^d\setminus B_{R}(0))} \\
&\overset{\eqref{konec}}\le C(L,T)\int_{\R^d\setminus B_{R}(0)} |u_0(x)|\dx +C(L,T)\left(\int_0^T\int_{\R^d \setminus B_{R}(0)}|u(s,x)|\dx\ds \right)^{\frac{1}{\alpha}}\\
&\overset{R\to \infty}\to 0,
\end{split}
\end{equation*}
which finishes the proof.

\bibliographystyle{abbrv}
\bibliography{hyper}

\end{document}